\newcommand{\be}{\beta}
\newcommand{\ga}{\gamma}
\newcommand{\ep}{\varepsilon}
\newcommand{\si}{\sigma}
\newcommand{\Ga}{\Gamma}
\newcommand{\Si}{\Sigma}
\newcommand{\ZZ}{{\mathbb Z}}
\newcommand{\RR}{{\mathbb R}}
\newcommand{\sS}{\mathfrak S}
\newcommand{\sL}{\mathscr L}
\newcommand{\lto}{\longrightarrow}
\newcommand{\lk}{\operatorname{\ell{\it k}}}
\newcommand{\genus}{\operatorname{genus}}
\newcommand{\sm}{\smallsetminus}
\newcommand{\co}{\colon}
\newcommand{\wt}[1]{{\widetilde{#1}}}
\newcommand{\spn}{\operatorname{span}}
\newcommand{\lb}{\left\langle}
\newcommand{\rb}{\right\rangle}
\newcommand{\KP}[1]{%
  \begin{tikzpicture}[baseline=-\dimexpr\fontdimen22\textfont2\relax]
  #1
  \end{tikzpicture}%
}
\newcommand{\KPC}{%
  \KP{\filldraw[color=black, fill=none, very thick] circle (0.18);}%
}
\newcommand{\KPX}{%
  \KP{
    \draw[color=black,very thick] (-0.3,-0.3) -- (0.3,0.3);
    \draw[color=black,very thick] (-0.3,0.3) -- (-0.05,0.05);
    \draw[color=black,very thick] (0.05,-0.05) -- (0.3,-0.3);
  }%
}
\newcommand{\KPB}{%
  \KP{%
    \draw[color=black,very thick] (-0.3,0.3) .. controls (0,-0.02) .. (0.3,0.3);
    \draw[color=black,very thick] (-0.3,-0.3) .. controls (0,0.02) .. (0.3,-0.3);
  }%
}
\newcommand{\KPA}{%
  \KP{%
    \draw[color=black,very thick] (-0.3,-0.3) .. controls (0.02,0) .. (-0.3,0.3);
    \draw[color=black,very thick] (0.3,-0.3) .. controls (-0.02,0) .. (0.3,0.3);
  }%
}
\newcommand{\KPD}{%
  \KP{%
    \draw[color=black,very thick] (-0.6,-0.30) .. controls (-0.5,-0.12) .. (-0.35,-0.00);
    \draw[color=black,very thick] (0.6,-0.30) .. controls (0.5,-0.12) .. (0.35,-0.00);
    \draw[color=black,very thick] (-0.24,0.08) .. controls (0.0, 0.20) .. (0.24, 0.08);
    \draw[color=black,very thick] (0.6,0.30) .. controls (0.0,-0.24) .. (-0.6,0.30);
  }%
}
\newcommand{\KPE}{%
  \KP{%
    \draw[color=black,very thick] (-0.45,-0.30) .. controls (0.0,-0.00) .. (0.45,-0.30);
    \draw[color=black,very thick] (-0.45,0.30) .. controls (0.0,0.00) .. (0.45,0.30);
  }%
}
\newcommand{\KPXY}{%
  \KP{
    \draw[color=black,very thick] (-0.4,0.4) -- (0.4,-0.4);
    \draw[color=black,very thick] (-0.4,-0.4) -- (-0.05,-0.05);
    \draw[color=black,very thick] (0.05,0.05) -- (0.4,0.4);
    \draw[color=black,very thick] (-0.5,0.05) .. controls (-0.4,0.12) .. (-0.27,0.20);
    \draw[color=black,very thick] (0.5,0.05) .. controls (0.4,0.12) .. (0.27,0.20);
    \draw[color=black,very thick] (-0.14,0.24) .. controls (0.0,0.26) .. (0.14,0.24);
  }%
}
\newcommand{\KPXZ}{%
  \KP{
    \draw[color=black,very thick] (-0.4,0.4) -- (0.4,-0.4);
    \draw[color=black,very thick] (-0.4,-0.4) -- (-0.05,-0.05);
    \draw[color=black,very thick] (0.05,0.05) -- (0.4,0.4);
    \draw[color=black,very thick] (-0.5,-0.05) .. controls (-0.4,-0.12) .. (-0.27,-0.20);
    \draw[color=black,very thick] (0.5,-0.05) .. controls (0.4,-0.12) .. (0.27,-0.20);
    \draw[color=black,very thick] (-0.14,-0.24) .. controls (0.0,-0.26) .. (0.14,-0.24);
  }%
}
\newcommand{\KPLA}{%
  \KP{\draw[color=black,very thick] (-0.4,-0.10) .. controls (0.0,0.10) .. (0.4,-0.10);}%
}
\newcommand{\KPCA}{%
  \KP{  \draw[black,very thick] (-0.6,-0.2) .. controls (0.6,0.45) and (-0.6,0.45) .. (-0.18, 0.14); 
  \draw[black,very thick] (-0.09,0.08) .. controls (0.2,-0.14) .. (0.3,-0.2);
\!\!\!\!
}%
}
\newcommand{\KPCB}{%
  \KP{  \draw[black,very thick] (0.6,-0.2) .. controls (-0.6,0.45) and (0.6,0.45) .. (0.18, 0.14); 
  \draw[black,very thick] (0.09,0.08) .. controls (-0.2,-0.14) .. (-0.3,-0.2);
\!\!\!\!
}%
}
\newcommand*\wbar[1]{
  \hbox{ \kern-0.2em%
    \vbox{%
      \hrule height 0.5pt  
      \kern0.25ex
      \hbox{%
        \kern-0.10em
        \ensuremath{#1}%
        \kern-0.05em
      }%
    }%
  \kern0.05em}%
}
\newtheorem{theorem}{Theorem}[section]
\newtheorem{lemma}[theorem]{Lemma}
\newtheorem{proposition}[theorem]{Proposition}
\newtheorem{corollary}[theorem]{Corollary}
\newtheorem{conjecture}[theorem]{Conjecture}
\newtheorem*{theorem*}{Theorem}
\theoremstyle{definition}     
\newtheorem{definition}[theorem]{Definition}
\theoremstyle{remark}
\newtheorem{remark}[theorem]{Remark}
\newtheorem{example}[theorem]{Example}
\newtheorem{problem}[theorem]{Problem}
\title[The Jones-Krushkal polynomial and minimal diagrams of surface links]{The Jones-Krushkal polynomial and minimal  diagrams of surface links}
\author[H. U. Boden]{Hans U. Boden}
\address{Mathematics \& Statistics, McMaster University, Hamilton, Ontario}
\email{boden@mcmaster.ca}
\thanks{The first author was partially funded by the Natural Sciences and Engineering Research Council of Canada.}
\author[H. Karimi]{Homayun Karimi}
\address{Mathematics \& Statistics, McMaster University, Hamilton, Ontario}
\email{karimih@math.mcmaster.ca}
\subjclass[2010]{Primary: 57M25, Secondary: 57M27}
\keywords{Kauffman bracket, Jones polynomial, Krushkal polynomial, alternating link diagram, adequate diagram, Tait conjectures, virtual link.}
\date{\today}                 
\begin{document}

\begin{abstract}
We prove a Kauffman-Murasugi-Thistlethwaite theorem for alternating links in thickened surfaces. It states that any reduced alternating diagram of a link in a thickened surface has minimal crossing number, and any two reduced alternating diagrams of the same link have the same writhe. This result is proved more generally for link diagrams that are adequate, and the proof involves a two-variable generalization of the Jones polynomial for surface links defined by Krushkal. The main result is used to establish the first and second Tait conjectures for links in thickened surfaces and for virtual links. 
\end{abstract}

\maketitle
\section*{Introduction}
A link diagram is called \emph{alternating} if the crossings alternate between over and under crossing as one travels around any component; any link admitting such a diagram is called \emph{alternating}. In his early work of tabulating knots \cite{Tait}, Tait formulated several far-reaching conjectures which, when resolved 100 years later, effectively solved the classification problem for alternating knots and links. Recall that a link diagram is said to be \emph{reduced} if it does not contain any nugatory crossings. Tait's first conjecture states that any reduced alternating diagram of a link has minimal crossing number. His second states that any two such diagrams representing the same link have the same  writhe. His third conjecture, also known as the Tait flyping conjecture, asserts that any two reduced alternating diagrams for the same link are related by a sequence of flype moves (see Figure \ref{fig-flype}).  

Tait's first and second conjectures were settled through results of Kauffman, Murasugi, and Thistlethwaite, who each gave  an independent proof using the newly discovered Jones polynomial \cites{Kauffman-87, Murasugi-871, Thistlethwaite-87}. The Tait flyping conjecture was subsequently solved by Menasco and Thistlethwaite \cite{Tait3}, and taken together, the three Tait conjectures provide an algorithm for classifying alternating knots and links. A striking corollary is that the crossing number is additive under connected sum for alternating links. It remains a difficult open problem to prove this in general for arbitrary links in $S^3$.

Virtual knots were introduced by Kauffman in \cite{KVKT}, and they represent a natural generalization of classical knot theory to knots in thickened surfaces. Classical knots and links embed faithfully into virtual knot theory \cite{GPV}, and many  invariants from classical knot theory extend in a natural way. For instance, the Jones polynomial was extended to virtual links by Kauffman \cite{KVKT}, who noted the abundant supply of virtual knots with trivial Jones polynomial. (For classical knots, it is an open problem whether there is a nontrivial knot with trivial Jones polynomial.) Indeed, there exist alternating virtual knots $K$ with trivial Jones polynomial (and even trivial Khovanov homology  \cite{Karimi}). Consequently, the Jones polynomial is not sufficiently strong to prove the analogue of the Kauffman-Murasugi-Thistlethwaite theorem for virtual links
(see also \cite{Kamada-2004} and \cite{Dye-2017}).

The main result in this paper is a proof of the Kauffman-Murasugi-Thistlethwaite theorem for reduced alternating links in thickened surfaces. This result can be paraphrased as follows (see Theorem \ref{thm:KMT} and Corollary \ref{cor:KMT}):

\begin{theorem*} 
If $L$ is a non-split alternating link in a thickened surface $\Si \times I$, then any connected reduced alternating diagram for $L$ has minimal crossing number. Further, any two reduced alternating diagrams of $L$ have the same writhe.
\end{theorem*}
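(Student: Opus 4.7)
My plan is to mirror the classical Kauffman-Murasugi-Thistlethwaite argument, replacing the Jones polynomial by the Krushkal polynomial in order to capture the extra topological information carried by state circles on the surface $\Si$. I would first expand the Kauffman bracket as a state sum over the $2^{c(D)}$ Kauffman states of a diagram $D\subset \Si$, and repackage the contributions as a two-variable Jones-Krushkal polynomial $P_\Si(D)$ whose exponents record both the number of state circles and their homological data on $\Si$. The writhe-corrected version $(-A)^{-3w(D)} P_\Si(D)$ is a link invariant, so its span (top degree minus bottom degree in the $A$-variable, after fixing the other variable appropriately) depends only on the link $L\subset\Si\times I$, not on the diagram.

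The main estimate to prove is that for any diagram $D$ the span of $P_\Si(D)$ is bounded above by a linear function of $c(D)$, with equality precisely when $D$ is adequate. For this I would identify the highest and lowest $A$-degree contributions as coming from the all-$A$ and all-$B$ states, respectively, together with their neighbors obtained by toggling one crossing. Adequacy is exactly the combinatorial condition that guarantees no state adjacent to the all-$A$ (resp.\ all-$B$) state cancels its leading (resp.\ trailing) monomial: in the all-$A$ resolution, the two arcs at each crossing lie on distinct state circles, so toggling any crossing strictly decreases the circle count and therefore cannot contribute to the extremal term. The central technical task will be verifying that this non-cancellation survives the refinement by Krushkal's extra variables; that is, the leading monomial of $P_\Si(D)$ remains nonzero even after one tracks how surface-level homology classes of state circles combine under crossing resolutions. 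This is where the proof genuinely uses Krushkal's polynomial rather than the ordinary Jones polynomial, and is the step I expect to require the most care.

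Once the span bound is in place, the first Tait conjecture follows immediately: a reduced alternating diagram is both $A$- and $B$-adequate (by the standard argument that a nugatory crossing is the only local obstruction to adequacy of the checkerboard-colored state), so it realizes the maximum span allowed by its crossing number; any other diagram $D'$ of $L$ has $c(D')\geq\tfrac14\operatorname{span}P_\Si(L)= c(D)$. For the second Tait conjecture, I would compare the writhe-corrected normalization $(-A)^{-3w(D)}$ to the unnormalized extremal degrees. Because the extremal $A$-degrees of $P_\Si(D)$ for a reduced alternating $D$ are determined by $c(D)$ and explicit topological data of the all-$A$ and all-$B$ state surfaces, and because both the normalized polynomial and $c(D)$ are the same for any two reduced alternating diagrams of $L$, solving for $w(D)$ in terms of these extremal degrees forces the writhe to agree.

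Finally, a small amount of bookkeeping is needed to pass from the theorem as stated (non-split, connected reduced alternating) to the general surface setting: one uses that a non-split link in $\Si\times I$ admits a connected diagram, and that splitting off disjoint summands affects $P_\Si$ in a controlled way so that the span bound remains sharp on each component. The main obstacle throughout is the non-cancellation of extremal coefficients in the Krushkal refinement; everything else is a direct adaptation of the classical proof.
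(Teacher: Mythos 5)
Your high-level strategy (state-sum expansion, span bound, adequacy of reduced alternating diagrams) is the paper's, but three steps contain errors or omissions that prevent the argument from closing.

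Your explanation of why a reduced alternating diagram is adequate is the classical $S^2$ argument, and it is false on higher-genus surfaces. You assert that ``in the all-$A$ resolution, the two arcs at each crossing lie on distinct state circles'' and that ``a nugatory crossing is the only local obstruction.'' On a surface, a crossing whose Tait-graph edge is a loop gives a self-abutting circle in $S_A$, yet such a crossing is nugatory only if the loop \emph{separates} $\Si$. The paper's Definition \ref{defn:adequate} is deliberately weaker than classical plus/minus-adequacy, and the content of Proposition \ref{prop:adequate} is that when a self-abutting circle of $S_A$ is re-smoothed, the two new circles each carry the nontrivial homology class of the non-separating loop, so $r(S)$ rises by one while $k(S)$ stays equal to $k(S_A)$. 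The Krushkal refinement is not a nuisance that classical non-cancellation must ``survive''; it is precisely what \emph{creates} adequacy where the classical circle-count criterion fails.

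Your span bound also omits a crucial ingredient, and your writhe argument is underdetermined. Adequacy gives $\spn(\lb D \rb_\Si) = 2n + 2k(S_A) + 2k(S_B)$ for a reduced alternating $D$, but to bound the span of an arbitrary competing diagram you need the dual state lemma, $k(S)+k(S^\vee) \leq n+2-2g$ for any state $S$ of a connected, cellularly embedded diagram (Lemma \ref{lemma:dual-state}); its proof builds a surface $M_S$ with $\partial M_S = S\cup S^\vee$ that deformation retracts onto $D$, and chases the long exact sequence of the pair $(M_S,\partial M_S)$. You never supply a substitute, so your plan gives no a priori upper bound on the span of a competing diagram. Finally, ``solving for $w(D)$'' from the normalized extremal degrees does not work: for two reduced alternating diagrams of the same crossing number, equating $d_{\max}$ and $d_{\min}$ of the normalized bracket, and using $|S_A|+|S_B|=n+2-2g$, reduces to the single relation $2\bigl(|S_A(D_1)|-|S_A(D_2)|\bigr) = 3\bigl(w(D_1)-w(D_2)\bigr)$, which does not force $w(D_1)=w(D_2)$. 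The paper instead adapts Stong's $r$-parallel cabling argument (Theorem \ref{thm:stong}): adequacy is shown to be preserved under cabling, $d_{\max}$ of the cabled brackets is compared as a quadratic in $r$, and the resulting inequality $n_D - w(D) \leq n_E - w(E)$ for $A$-adequate $D$, together with its $B$-adequate mirror, pins down the writhe.
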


The theorem is established using a two-variable generalization of the Jones polynomial for links in thickened surfaces defined by Krushkal \cite{krushkal-2011}. The Jones-Krushkal polynomial is a homological refinement of the usual Jones polynomial in that it records the homological ranks of the states under restriction to the background surface. It is derived from Krushkal's extension of the Tutte polynomial to graphs in surfaces \cite{krushkal-2011}. The main result is proved more generally for diagrams that are adequate in a certain sense (see Definition \ref{defn:adequate}), and we show that every reduced alternating diagram of a link in a thickened surface is adequate (Proposition \ref{prop:adequate}). Further, if $L$ is a virtual link and $D$ is an alternating diagram for $L$, then we show that $L$ is split if and only if $D$ is a split diagram (Corollary \ref{cor:split}).  In addition, we prove the dual state lemma for links in thickened surfaces (Lemma \ref{lemma:dual-state}).  In the last section, Corollary \ref{cor:KMT} is applied to prove the Tait conjectures for virtual knots and links (see Theorem \ref{thm:Tait-knot} and \ref{thm:virtual-tait}).

In \cite{Adams}), Adams et al.~use geometric methods to prove minimality of reduced alternating diagrams of knots in thickened surfaces. In this paper, we generalize the results in \cite{Adams} to links in thickened surfaces admitting adequate diagrams. The proof relies on an analysis of the homological Kauffman bracket and Jones-Krushkal polynomial. These  invariants are closely related to the surface bracket polynomial studied in \cite{Dye-Kauffman-2005} and \cite{Manturov-2003a}. However, they exhibit different behavior in that they are not invariant under stabilization and destabilization. In \cite{krushkal-2011}, Krushkal shows that the Jones-Krushkal polynomial admits an interpretation in terms of the generalized Tutte polynomial of the associated Tait graph. This result is a generalization of Thistlethwaite's theorem \cite{Thistlethwaite-87}. In a similar vein, Chmutov and Voltz show how to relate the Jones polynomial of a checkerboard colorable virtual link with the Bollab\'as-Riordan polynomial of its Tait graph in \cite{Chmutov-Voltz} (see also \cite{Chmutov-Pak}).

We close this introduction with a brief synopsis of the contents of the rest of this paper.
In Section \ref{sec-1}, we review background material on links in thickened surfaces and virtual links. 
One result characterizes checkerboard colorable virtual links (Proposition \ref{prop:equiv}), and another characterizes alternating virtual links that are split (Corollary \ref{cor:split}). In Section \ref{sec-2}, we recall the definition of the homological Kauffman bracket $\lb \, \cdot \, \rb_\Si$ and show that it is invariant under regular isotopy. We prove that every reduced alternating link diagram is adequate (Proposition \ref{prop:adequate}) and establish the dual state lemma (Lemma \ref{lemma:dual-state}).  In Section \ref{sec-3}, we introduce the Jones-Krushkal polynomial $\wt{J}(t,z) \in \ZZ[t^{1/2}, t^{-1/2}, z]$ and show that it is an invariant of links in thickened surfaces up to isotopy and diffeomorphism. For checkerboard colorable links, we introduce the \emph{reduced} Jones-Krushkal polynomial $J(t,z)$ and give many sample calculations of $\wt{J}(t,z)$ and $ J(t,z)$. We prove that the Jones-Krushkal polynomial of $L$ is closely related to that of its mirror images $L^*$ and $L^\dag$
(Proposition \ref{star}).
  
Section \ref{sec-4} contains the proof of the main result, which is the Kauffman-Murasugi-Thistlethwaite theorem for reduced alternating link diagrams on surfaces (Corollary \ref{cor:KMT}). In Section \ref{sec-5}, we apply the main result to deduce the first and second Tait conjectures for virtual links (Theorem \ref{thm:virtual-tait}). Table \ref{table1} lists the reduced and unreduced Jones-Krushkal polynomials for all virtual knots with 3 crossings and all checkerboard colorable virtual knots with 4 crossings.

\bigskip
\noindent\emph{Notation.} Unless otherwise specified, all homology groups are taken with $\ZZ/2$ coefficients. Decimal numbers such as 3.5 and 4.98 refer to the virtual knots in Green's tabulation \cite{Green}.

\section{Virtual links and links in thickened surfaces} \label{sec-1}
In this section, we review the basic properties of virtual links and links in thickened surfaces.

\subsection{Virtual link diagrams}
A virtual link diagram is an immersion of  $m \geq 1$ circles in the plane with only double points, such that each double point is either classical (indicated by over- and under-crossings) or virtual (indicated by a circle around the double point). Two virtual link diagrams are said to be equivalent if they can be related by planar isotopies, Reidemeister moves,  and the \emph{detour move} shown in Figure \ref{Fig:detour}. An oriented virtual link $L$ includes a choice of orientation for each component of $L$, which is indicated by placing arrows on the components as in Figure \ref{fig:examples}.
      
\begin{figure}[!ht]
\centering\includegraphics[height=30mm]{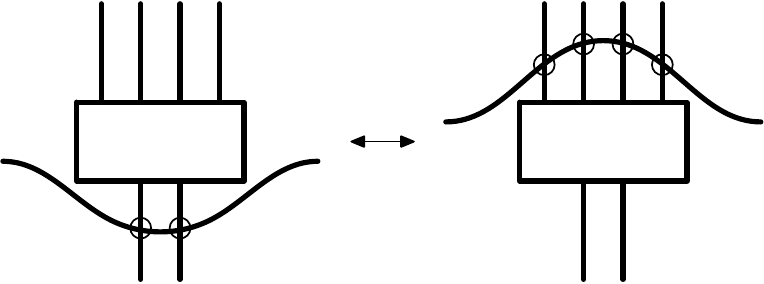}
\caption{The detour move.} \label{Fig:detour}
\end{figure}

Given a virtual link diagram $D$, the crossing number is denoted $c(D)$ and is defined to be the number of classical crossings of $D$. The \emph{crossing number} of a virtual link $L$ is the minimum crossing number $c(D)$ taken over all virtual link diagrams $D$ representing $L$. 

\begin{figure}[!ht]  
\centering
\includegraphics[height=24mm]{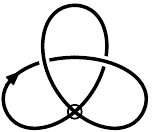}\hspace{.8cm}
\includegraphics[height=20mm]{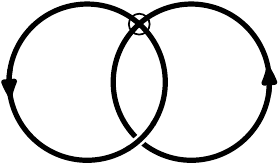}\hspace{.8cm}
\includegraphics[height=30mm]{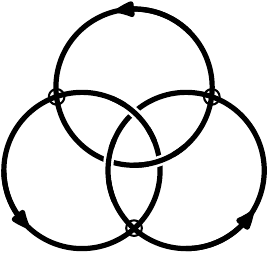} 
\caption{The virtual trefoil, Hopf link, and Borromean rings.} \label{fig:examples}
\end{figure}

Given an oriented virtual link, each classical crossing is either positive or negative, see Figure \ref{Fig:sign}. The writhe of the crossing is $\pm 1$ according to whether the crossing is positive or negative. The writhe of the diagram is the sum of the writhes of all its crossings.
 
\begin{definition} \label{defn:writhe}
For a virtual link diagram $D$, the \emph{writhe} of $D$, denoted $w(D)$ is defined as $n_{+}(D)-n_{-}(D)$, where $n_{+}(D)$ and $n_{-}(D)$ are the number of positive and negative crossings in $D$, respectively. 
\end{definition}
 
\begin{figure}[!ht]
\centering\includegraphics[height=25mm]{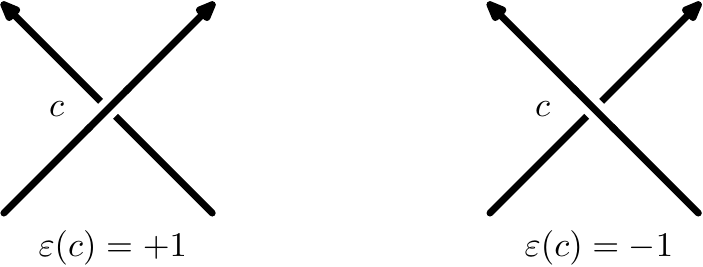}
\caption{A positive and a negative crossing.} \label{Fig:sign}
\end{figure}

\subsection{Links in thickened surfaces}
Virtual links can also be defined as equivalence classes of links in thickened surfaces.
Let  $I = [0,1]$ denote the unit interval and $\Si$ be a compact, connected, oriented surface.  A \emph{link in the thickened surface} $\Si \times I$ is an embedding $L \colon \bigsqcup_{i=1}^m S^1 \hookrightarrow \Si \times I$, considered up to isotopy and orientation preserving homeomorphisms  of the pair $(\Si \times I, \Si \times \{0\})$.

A \emph{surface link diagram} on $\Si$ is a tetravalent graph in $\Si$ whose vertices indicate over and under crossings in the usual way. Two surface link diagrams represent isotopic links if and only if they are equivalent by local Reidemeister moves. The writhe of a link diagram on a surface is defined in the same way as it is for virtual links (cf. Definition \ref{defn:writhe}).

Two link diagrams on $\Si$ are said to be \emph{regularly isotopic} if one can be obtained from the other by a sequence of moves that involve Reidemeister 2 and 3 moves and the writhe-preserving move in Figure \ref{Fig:rm-4}. Notice that the writhe is invariant under regular isotopy of links in $\Si \times I.$

\begin{figure}[!ht]
\centering\includegraphics[height=15mm]{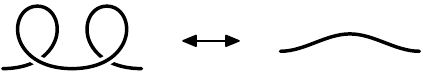}
\caption{Regular isotopy includes the above move together with Reidemeister 2 and 3 moves.} \label{Fig:rm-4}
\end{figure}

Let $p\co \Si \times I \to \Si$ be projection onto the first factor. The image
$p(L) \subset \Si$ is called the \emph{projection} of the link.  Using an isotopy, we can arrange that the projection is a regular immersion with finitely many double points.

Two links $L_0 \subset \Si_0\times I$ and $L_1 \subset \Si_1 \times I$ are said to be \emph{stably equivalent} if one is obtained from the other by a finite sequence of isotopies, diffeomorphisms, stabilizations, and destabilizations. Stabilization is the operation of adding a handle to $\Si$ to obtain a new surface $\Si'$, and destabilization is the opposite procedure. Specifically, if $D_0$ and $D_1$ are two disjoint disks in $\Si$ which are both disjoint from the image of $L$ under projection $\Si\times I\to \Si$, then $\Si'$ is the surface with $\genus(\Si') = \genus(\Si) + 1$ obtained by attaching an annulus $A = S^1 \times I$ to $\Si \sm (D_0 \cup D_1)$ so that $\partial A = \partial D_0 \cup \partial D_1$. This operation is referred to as \emph{stabilization}, and the opposite procedure is called \emph{destabilization}. It involves cutting along a vertical annulus in $\Si \times I$ disjoint from the link and attaching two 2-disks.

In \cite{Carter-Kamada-Saito}, Carter, Kamada, and Saito give a one-to-one correspondence between virtual links and stable equivalence classes of links in thickened surfaces. The next result is Kuperberg's theorem \cite{Kuperberg}.
 
\begin{theorem} \label{thm:kuperberg}
Every stable equivalence class of links in thickened surfaces has a unique irreducible representative.
\end{theorem}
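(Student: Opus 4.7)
The plan is to establish existence of an irreducible representative by a minimum-genus argument and uniqueness by an innermost-disk surgery on vertical annuli. Say that $L\subset \Si\times I$ is \emph{irreducible} if no essential simple closed curve $\ga\subset \Si$ has the property that the vertical annulus $\ga\times I$ is disjoint from $L$; this is precisely the condition under which no destabilization of $L$ is possible. For existence, I would take any representative of the given stable equivalence class and then pick one for which $\genus(\Si)$ is minimal, which exists since the genus is a nonnegative integer. If such a representative were reducible, a destabilization would yield a representative of smaller genus, contradicting minimality; hence the minimizer is irreducible.

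For uniqueness, suppose $L_0\subset \Si_0\times I$ and $L_1\subset \Si_1\times I$ are two irreducible representatives of the same stable equivalence class. By performing all stabilizations in any sequence joining them before any destabilizations, I would first produce a common stabilization $L\subset \Si\times I$ together with two disjoint families of vertical annuli $A_0, A_1$ in $\Si\times I\sm L$ whose successive destabilizations recover $L_0$ and $L_1$ respectively. The core step is to isotope $A_0$ and $A_1$ in $\Si\times I\sm L$ so that they become mutually disjoint. Since each annulus is vertical, the intersection pattern is controlled by the intersection of the defining curves on $\Si$, and a curve $\ga_0$ defining a component of $A_0$ and a curve $\ga_1$ defining a component of $A_1$ may be placed in general position on $\Si$. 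Innermost-bigon surgery on $\Si$, lifted to an ambient isotopy in $\Si\times I$ disjoint from $L$, then reduces $|\ga_0\cap \ga_1|$ without changing which destabilization each annulus effects. After finitely many such surgeries the families are disjoint, and simultaneous destabilization along $A_0\cup A_1$ expresses both $L_0$ and $L_1$ as destabilizations of a common link in a smaller-genus surface; irreducibility then forces the residual destabilizing families to be empty, so $L_0$ and $L_1$ agree up to diffeomorphism and isotopy.

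The main obstacle is the surgery step carried out in the link complement $\Si\times I\sm L$. One must verify that bigon reduction can be performed without crossing $L$ and without creating inessential annuli, which would correspond to trivial destabilizations and must be discarded. This requires an incompressibility and boundary-incompressibility argument for the vertical annuli in the exterior of $L$, and is where the ambient topology of the complement, as opposed to just the combinatorics of curves on $\Si$, enters essentially. With those standard $3$-manifold tools in hand, the intersection curves of $A_0\cap A_1$ can be eliminated by innermost disk moves, completing the proof.
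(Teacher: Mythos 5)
The paper does not prove this result; it states it as Kuperberg's theorem and cites \cite{Kuperberg}, so there is no in-paper proof to compare against.

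Your outline is in the right spirit (Kuperberg's argument does proceed via incompressible annuli in the exterior of $L$ in $\Si \times I$), and you correctly identify where the difficulty lies, but the sketch defers precisely the parts that constitute the theorem. Two gaps in particular. First, the claim that any stable equivalence can be rearranged so that all stabilizations come before all destabilizations is asserted without argument; to commute a destabilization along $\ga$ past a subsequent stabilization one must isotope the stabilizing disks off the two caps created by the destabilization, and while this is plausible it is not free, and it is exactly the kind of ``reordering of handle moves'' claim that normally needs a Cerf-type or surgery argument. Second, the bigon reduction is stated on $\Si$, but a bigon $B$ cut off by $\ga_0$ and $\ga_1$ can contain points of $p(L)$, in which case the naive isotopy across $B\times I$ crosses $L$; to push past $L$ one must work in the $3$-manifold $\Si\times I\setminus N(L)$, where the isotoped annulus is no longer vertical and the curve-on-$\Si$ picture breaks down. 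Related to this, once $A_0$ and $A_1$ are disjoined and you compress along $A_0$ to land on $L_0\subset\Si_0\times I$, the image of $A_1$ need not be a vertical annulus in $\Si_0\times I$, so it is not immediate that it is a destabilization of $L_0$; this is where incompressibility and $\partial$-incompressibility of the annuli, and irreducibility of the complement, must actually be established rather than invoked as ``standard tools in hand.'' As written, the proposal is a plan for a proof, with the two central lemmas (common stabilization, and disjoining of compressing annuli in the link exterior) left open.
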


Given a virtual link $L$, its virtual genus $g_v(L)$ is defined to be the genus of the surface of its unique irreducible representative. A virtual link $L$ is said to be \emph{classical} if  it has virtual genus $g_v(L)=0$. This is the case if and only if it can be represented by a virtual link diagram with no virtual crossings. For instance, the three virtual links in Figure \ref{fig:examples}  all have virtual genus equal to one and so are non-classical (see Figures \ref{fig:Hopf-in-torus}, \ref{fig:2-1-in-torus}, and \ref{fig:Borromean-in-torus}).

There is a construction, due to Kamada and Kamada \cite{KK00}, which associates to any virtual link diagram $D$ a ribbon graph on an oriented surface. The graph is a tetravalent graph representing the projection of $D$, and the surface $M_D$ has a handlebody decomposition with 0-handles being disk neighborhoods of each of the real crossings of $D$, and 1-handles for each of the arcs of $D$ from one crossing to the next. If $D$ has $n$ crossings, then $M_D$ has $n$ 0-handles and $2n$ 1-handles. Let $\Si_D$ denote the  closed oriented surface obtained by attaching disks to all the boundary components of $M_D$. A diagram $D$ for a virtual link $L$ is said to be a \emph{minimal genus diagram} if the genus of $\Si_D$ is equal to the virtual genus of $L$.

Notice that under this construction, the link diagram is cellularly embedded in $\Si_D$, namely the complement of its projection is a union of disks. By Theorem \ref{thm:kuperberg}, this will be true for any minimal genus diagram of a virtual link $L$.

\subsection{Alternating virtual links}

A virtual link diagram $D$ is said to be \emph{alternating} if, when traveling along the components, the classical crossings alternate from over to under when one disregards the virtual crossings. A virtual link $L$ is \emph{alternating} if it can be represented by an alternating virtual link diagram.

In a similar way, a surface link diagram $D$ on $\Si$ is said to be \emph{alternating} if the crossings alternate from over to under around any component of $D$. It follows that a virtual link is alternating if and only if it can be represented by an alternating surface link diagram.

\begin{definition}
Let $D$ be a surface link diagram on $\Si$. A crossing $c$ in $D$ is called \emph{nugatory} if we can find a simple closed curve in $\Si$ which separates $\Si$ and intersects $D$ only in the double point $c$. 
\end{definition}

\begin{remark} \label{rem:nug}
For classical link diagrams, nugatory crossings can always be removed by rotating one side of the diagram $180^\circ$ relative to the other. In contrast, for link diagrams on surfaces, nugatory crossings are not in general removable.
\end{remark} 

\begin{figure}[!ht]
\centering\includegraphics[height=32mm]{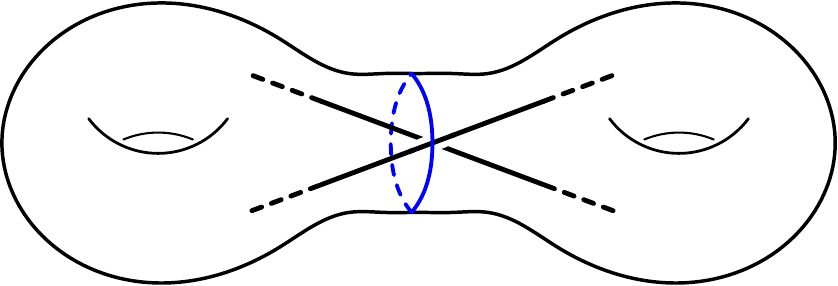}
\caption{A nugatory crossing.} \label{Fig-nugatory}
\end{figure}

\begin{definition} \label{defn:reduced}
A surface link diagram $D$ on $\Si$ is called \emph{reduced} if it is cellularly embedded and has no nugatory crossings.  
\end{definition}
\begin{remark} \label{rem:KK}
Note that, by the Kamada-Kamada construction, any virtual link can be realized by a cellularly embedded diagram on a surface. Thus, the first condition of Definition \ref{defn:reduced} can always be arranged for virtual links. 
\end{remark} 

\subsection{Checkerboard colorable links}
A surface link  diagram $D$ on $\Si$ is said to be \emph{checkerboard colorable} if the components of $\Si \sm D$ can be colored by two colors such that any two components of $\Si \sm D$ that share an edge have opposite colors. A  link in a thickened surface is checkerboard colorable if it can be represented by a checkerboard colorable surface link diagram. Likewise, a virtual link is checkerboard colorable if it admits a checkerboard colorable representative.
 
In \cite{Kamada-2002}, Kamada showed that every alternating virtual link is checkerboard colorable.
In fact, in \cite[Lemma 7]{Kamada-2002} she showed that a virtual link diagram is checkerboard colorable if and only if it can be transformed into an alternating diagram under crossing changes.

There are, however, subtle differences between the categories of virtual links and links in thickened surfaces. For instance, Figure \ref{Fig:trefoil} presents an alternating knot diagram on the torus which is not checkerboard colorable, hence Kamada's result is {\bf not} true for surface links. The failure stems from the fact that this diagram is not cellularly embedded. In particular, the diagram in Figure \ref{Fig:trefoil} is not minimal genus, and any vertical arc disjoint from the knot gives a destabilizing curve. Destabilization along this curve shows that this knot is stably equivalent to the classical trefoil, which of course is checkerboard colorable.

\begin{figure}[!ht]
\centering\includegraphics[height=32mm]{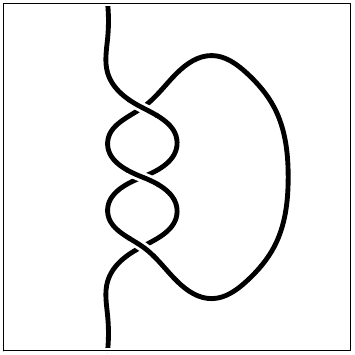}
\caption{An alternating knot diagram on the torus which is not checkerboard colorable.}
\label{Fig:trefoil}
\end{figure}

Several authors have used slightly different names for the notion of checkerboard colorability. For instance, in \cite{KNS-2002}, checkerboard colorable links are called \emph{normal}, and in \cite{Rushworth-2018}, checkerboard colorable diagrams called \emph{even}. In \cite{Boden-Gaudreau-Harper-2016}, checkerboard colorable links are called mod 2 almost classical links.

Suppose $D$ is a surface link diagram on $\Si$ which is cellularly embedded and checkerboard colorable, and fix a checkerboard coloring of the complementary regions of $D$ in $\Si$. The black regions determine a spanning surface for $L$ which is the union of disks and bands, with one disk for each black region and one half-twisted band for each crossing. 

The result is an unoriented surface $F$ embedded in $\Si \times I$ with boundary $\partial F =L.$  Associated to this surface is its Tait graph $\Ga$, which is a graph embedded in $\Si$ with one vertex for each black region and one edge for each crossing. There is an edge between two vertices for each crossing connecting the corresponding regions. In particular, if a black region has a self-abutting crossing, then its Tait graph $\Ga$ will contain a loop. 

The dual spanning surface for $L$ can be constructed by starting with the white regions and adding half-twisted bands for each crossing. Its Tait graph is defined similarly. The black and white Tait graphs are dual graphs in the surface $\Si$, and each of the checkerboard surfaces deformation retracts onto its Tait graph. The next result gives a useful characterization of checkerboard colorability for links in thickened surfaces.

\begin{proposition} \label{prop:equiv}
Given a link  $L \subset \Si \times I$ in a thickened surface, the following are equivalent:
\begin{enumerate}
\item[(i)] $L$ is checkerboard colorable.
\item[(ii)] $L$ is the boundary of an unoriented spanning surface $F \subset \Si \times I.$
\item[(iii)] $[L]=0$ in the homology group $H_1(\Si \times I; \ZZ/2).$
\end{enumerate}
\end{proposition}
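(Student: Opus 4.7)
The plan is to prove the cycle $(i) \Rightarrow (ii) \Rightarrow (iii) \Rightarrow (i)$. The first implication is the explicit construction already sketched in the paragraph immediately preceding the proposition: a checkerboard coloring of a diagram $D$ of $L$ on $\Si$ produces a surface $F \subset \Si \times I$ built by placing a copy of each black region of $\Si \sm D$ slightly below the diagram and joining these pieces by a half-twisted band at each crossing, so that $\partial F = L$ by construction. The implication $(ii) \Rightarrow (iii)$ is immediate, since $L = \partial F$ in $\Si \times I$ forces $[L] = 0$ in $H_1(\Si \times I; \ZZ/2)$ tautologically.

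For the substantive direction $(iii) \Rightarrow (i)$, I would fix a diagram $D = p(L) \subset \Si$ coming from the projection $p \co \Si \times I \to \Si$. Viewed as the sum of its edges, $D$ is a mod-$2$ $1$-cycle (each vertex has valence $4$), and since $p$ is a homotopy equivalence the class $[D] \in H_1(\Si; \ZZ/2)$ coincides with $[L]$, which is zero by hypothesis. I would then define a candidate $2$-coloring $\phi$ of the regions of $\Si \sm D$ by mod-$2$ intersection numbers: fix a base region $R_0$ with basepoint $p_0$ and set $\phi(R_0) = 0$; for any other region $R_i$, pick $p_i \in R_i$ and a path $\gamma_i$ from $p_0$ to $p_i$ transverse to $D$, and set $\phi(R_i) \equiv |\gamma_i \cap D| \pmod 2$. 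That $\phi$ is a genuine checkerboard coloring is then clear, because any two regions sharing an edge $e$ can be connected by a path crossing $D$ transversally only in $e$, so $\phi$ flips across every edge of $D$.

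The main obstacle is verifying that $\phi$ is well-defined: if $\gamma_i$ and $\gamma_i'$ are two choices of path, then $\ell = \gamma_i \cdot (\gamma_i')^{-1}$ is a loop based at $p_0$, and one needs $|\ell \cap D| \equiv 0 \pmod 2$. This is precisely where the hypothesis $[D] = 0$ enters, via the mod-$2$ intersection pairing $H_1(\Si; \ZZ/2) \times H_1(\Si; \ZZ/2) \to \ZZ/2$: one has $|\ell \cap D| \equiv [\ell] \cdot [D] = 0 \pmod 2$. This uses Poincar\'e duality, which is cleanest for closed oriented $\Si$; in the case $\partial \Si \neq \emptyset$, I would apply Poincar\'e--Lefschetz duality (noting that $D$ lies in the interior of $\Si$, so the pairing against loops is still well-defined) or simply double $\Si$ along its boundary and reduce to the closed case.
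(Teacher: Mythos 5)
Your proposal is correct, and it takes a genuinely different route from the paper. The paper proves the two biconditionals $(i)\Leftrightarrow(ii)$ and $(ii)\Leftrightarrow(iii)$ separately: the $(i)\Rightarrow(ii)$ direction is the same half-twisted-band construction you use, $(ii)\Rightarrow(i)$ is a geometric argument (shrink the disks of $F$ so their projections are disjoint, then attach a $1$-handle at each band crossing to obtain the black checkerboard surface), and $(iii)\Rightarrow(ii)$ is explicitly ``left to the reader.'' You instead close the cycle $(i)\Rightarrow(ii)\Rightarrow(iii)\Rightarrow(i)$ with a direct homological proof of $(iii)\Rightarrow(i)$: define a $2$-coloring of the regions of $\Si\sm D$ by mod-$2$ intersection numbers of paths with the projected diagram, and use $[D]=0$ together with the intersection pairing to see the coloring is well-defined. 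This buys you two things at once: it makes the step the paper elides explicit, and it renders the paper's $(ii)\Rightarrow(i)$ argument unnecessary, since that implication now follows from the cycle. Note that the hypothesis $[D]=0$ also tacitly guarantees no region of $\Si\sm D$ abuts itself along an edge (otherwise a small transverse loop across that edge would have odd intersection with $D$), which is exactly what makes your candidate coloring a genuine checkerboard coloring rather than a contradictory assignment. In the paper's setting $\Si$ is always closed (it arises from the Kamada-Kamada construction or Kuperberg's theorem), so your Poincar\'e--Lefschetz caveat for $\partial\Si\neq\emptyset$ is harmless but not needed.
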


\begin{proof}
If $L$ is checkerboard colorable, then an unoriented spanning surface is obtained by attaching one half-twisted band between two black regions for each crossing of $L$. This shows that (i)$\Rightarrow$(ii), and to see the reverse implication, suppose $F$ is a spanning surface for $L$, realized as a union of disks and bands in $\Si \times I$. Perform an isotopy to shrink the disks so their images under projection $p \co \Si \times I \to \Si$ are disjoint from one another and from each band. Thus, the projection, restricted to $F$, is an embedding except for band crossings. At each band crossing, we can attach a 1-handle so that the new surface is the black surface for a checkerboard coloring of the resulting diagram of $L$. Thus (ii)$\Rightarrow$(i).

The step (ii)$\Rightarrow$(iii) is obvious, and the reverse implication follows from a standard argument which is left to the reader. 
\end{proof} 


\subsection{Split virtual links}
In this section, we show that an alternating virtual link $L$ is split if and only if it is obviously split. In other words, $L$ is split if and only if every alternating diagram of $L$ is split. 

We begin by recalling an invariant of checkerboard colorable links called the link determinant.  Suppose $L$ is a virtual link that is represented by a checkerboard colorable diagram $D$ with $n$ crossings $\{c_1,\ldots, c_n \}$ and $m$ arcs $\{a_1,\ldots, a_{m}\}.$ Each arc $a_j$  starts at a classical undercrossing and goes to the next classical undercrossing, passing through any intermediate virtual crossings or overcrossings along the way. If $D$  has $k$ connected components, then $m = n+k-1$.

Define the $n\times m$ coloring matrix $B(D)$ so that its $ij$ entry is given by
\begin{eqnarray*}
b_{ij}(D) &=&\begin{cases}
2,&  \text{if $a_j$ is the overcrossing arc at $c_i$},\\
-1,& \text{if $a_j$ is one of the undercrossing arcs at $c_i$},\\
0,& \text{otherwise}.
\end{cases}
\end{eqnarray*}
In case $a_j$ is coincidentally the overcrossing arc and an undercrossing arc at $c_i$, then we set $b_{ij}(D)=1$. 

Note that the matrix $B(D)$ is the one obtained by specializing the Fox Jacobian matrix $A(D)$ at $t=-1$. Here, $A(D)$ is defined in terms of taking Fox derivatives of the Wirtinger presentation of the link group $G_D$ whose generators are given by the arcs $a_1,\ldots, a_m$ and relations are given by crossings $c_1,\ldots, c_n$ and applying the abelianization homomophism $G_L \to \lb t \rb, \ a_i \mapsto t$. 
For details, see \cite{Boden-Gaudreau-Harper-2016}.

Notice that the entries in each row of $B(D)$ sum to zero, therefore it has rank at most $n-1$. 
The next result is proved in \cite{Boden-Gaudreau-Harper-2016} .

\begin{proposition} \label{prop-alex-det}
Any two  $(n-1) \times (n-1)$ minors of $B(D)$ are equal up to sign. The absolute value of the minor is independent of the choice of checkerboard colorable diagram $D$. It defines an  invariant of checkerboard colorable links $L$ called the \emph{determinant} of $L$ and denoted $\det(L)$.
\end{proposition}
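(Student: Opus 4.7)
The plan is to adapt the classical proof for the knot determinant to the virtual setting, by identifying $B(D)$ as a specialization of the virtual Alexander matrix and exploiting the structure of Wirtinger presentations. First I would record that $B(D) = A(D)|_{t=-1}$, where $A(D)$ is the virtual Alexander matrix of the Wirtinger presentation of the link group $G_D$. Each classical crossing contributes a row obtained as the Fox derivative of the corresponding Wirtinger relator, which at $t = -1$ has entries $2$, $-1$, $-1$ (and $1$ in the degenerate case when the overarc coincides with an understrand arc), exactly matching the definition of $b_{ij}(D)$. In particular, the row sums of $B(D)$ vanish.

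To establish that any two $(n-1) \times (n-1)$ minors of $B(D)$ agree up to sign, I would argue in two steps. The vanishing of row sums immediately yields, by elementary row operations, that two minors sharing the same column set but omitting different rows differ only by sign. For columns, I would invoke the result of \cite{Boden-Gaudreau-Harper-2016} that for a Wirtinger presentation every $(n-1) \times (n-1)$ minor of $A(D)$ equals the virtual Alexander polynomial $\Delta_L(t)$ up to multiplication by a unit $\pm t^k \in \ZZ[t, t^{-1}]$. Specializing at $t = -1$ then gives that each $(n-1) \times (n-1)$ minor of $B(D)$ equals $\pm \Delta_L(-1) \in \ZZ$, so any two such minors agree up to sign. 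For independence of the choice of checkerboard colorable diagram, I would verify invariance under each classical Reidemeister move and the detour move. The detour move affects only virtual crossings and leaves $B(D)$ unchanged; each classical Reidemeister move corresponds to a Tietze transformation of the Wirtinger presentation and changes $A(D)$ by multiplication by invertible matrices together with stabilization by identity blocks, and such operations preserve the unit-class of each $(n-1) \times (n-1)$ minor. Specializing to $t = -1$ preserves the absolute value of the minor, so $|\det(L)|$ is a well-defined invariant of $L$.

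The main obstacle is the column-agreement portion: the row-sum observation is elementary, but two minors obtained by deleting different column sets are not related by any elementary operation internal to $B(D)$, and one must lift back to $A(D)$ to use the fact that all of its maximal minors are unit multiples of a single polynomial. Proving this unit-equivalence property in the virtual setting is precisely the technical content of \cite{Boden-Gaudreau-Harper-2016}; once it is in hand, the proposition follows by routine specialization together with standard bookkeeping for Tietze transformations.
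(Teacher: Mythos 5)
The paper offers no proof of this proposition: it simply records the statement and defers to \cite{Boden-Gaudreau-Harper-2016}, so there is nothing internal to compare against. Your reconstruction, however, has two substantive problems. First, you have the roles of rows and columns reversed. ``Each row sums to zero'' means $\sum_j b_{ij}(D)=0$ for every $i$, i.e.~the all-ones vector lies in the right kernel of $B(D)$, which is a linear relation among the \emph{columns}. This handles minors that keep the same row set and omit different columns; it says nothing about omitting different rows. Conversely, the harder step -- showing that the minors obtained by dropping different \emph{rows} agree up to sign -- requires a genuine linear relation among the rows, which is exactly what does not come for free in the virtual setting.

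Second, the ingredient you want to import from \cite{Boden-Gaudreau-Harper-2016}, that every $(n-1)\times(n-1)$ minor of the Fox Jacobian $A(D)$ of a Wirtinger presentation equals a single Alexander polynomial up to a unit $\pm t^k$, is false for virtual links in general. For a classical link the Wirtinger presentation has a redundant relator (coming from the outer face of the planar diagram), which yields a relation $\sum_i \pm t^{k_i}\,\mathrm{row}_i = 0$ and hence the unit-equivalence of minors; for a virtual link there is no outer face and no redundant relator, and the maximal minors of $A(D)$ genuinely depend on the choice of deleted row. The content of \cite{Boden-Gaudreau-Harper-2016} is precisely that the needed row relation \emph{does} hold after specializing to $t=-1$ \emph{and} assuming the diagram is checkerboard colorable (equivalently, $[L]=0$ in $H_1(\Sigma\times I;\ZZ/2)$), with the signs in the relation supplied by the two-coloring of the regions. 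Your sketch never invokes checkerboard colorability, so it cannot supply this relation, and as written it would ``prove'' well-definedness of $\det$ for an arbitrary virtual link, which is not the case. To repair the argument you need to (a) swap the row/column bookkeeping, and (b) isolate where the coloring is used to produce the missing row relation at $t=-1$; the Tietze/detour-move part of your invariance argument is fine once that is in place.
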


\begin{definition}\label{split-link}
A virtual link diagram $D$ is said to be a \emph{split diagram} if it is disconnected, and  a virtual link $L$ is \emph{split} if it can be represented by a split diagram. 
\end{definition}

\begin{proposition}\label{split det}
Suppose $L$ is a checkerboard colorable virtual link. If $L$ is split, then $\det(L)=0$. 
\end{proposition}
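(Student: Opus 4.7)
The plan is to combine the block-diagonal structure of the coloring matrix of a split diagram with the row-sum-zero property recorded just before Proposition~\ref{prop-alex-det}.

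First, I would produce a diagram $D$ of $L$ that is both split and checkerboard colorable. Writing $L = L_1 \sqcup L_2$ and using the homological characterization of checkerboard colorability from Proposition~\ref{prop:equiv}, the class $[L] = [L_1] + [L_2] \in H_1(\Si \times I; \ZZ/2)$ must vanish; because the two sublinks can be arranged to lie in disjoint subsurfaces, this forces each $[L_i] = 0$ separately, so each $L_i$ is itself checkerboard colorable. Choosing CC diagrams $D_i$ for each $L_i$ and taking $D = D_1 \sqcup D_2$ (adjusting the color convention on $D_2$ if necessary so that the two outer regions agree) yields the required split CC diagram of $L$.

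Next, because $D_1$ and $D_2$ share no arcs or crossings, the coloring matrix has the block form $B(D) = B(D_1) \oplus B(D_2)$. The row-sum-zero property gives $B(D_i)\,\mathbf{1}_{m_i} = 0$, producing two linearly independent null vectors $\mathbf{1}_{m_1} \oplus 0$ and $0 \oplus \mathbf{1}_{m_2}$ in $\ker B(D)$.

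Finally, I would show that every $(n-1) \times (n-1)$ submatrix $M$ of $B(D)$ is singular. Write the row and column selections across the two blocks as $r_1 + r_2 = c_1 + c_2 = n-1$ with $r_i \leq n_i$; then $M$ inherits the block form $M = M_1 \oplus M_2$ with $M_i$ of size $r_i \times c_i$. If $r_i \neq c_i$ for some $i$, then one block is rectangular and a dimension count forces $M$ singular. In the balanced case, $r_1 + r_2 = n_1 + n_2 - 1$ combined with $r_i \leq n_i$ forces (say) $r_1 = n_1$, and with the $D_i$ chosen connected so that $m_i = n_i$, the block $M_1$ is all of $B(D_1)$, which is singular by row-sum-zero. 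Hence $\det M = \det M_1 \cdot \det M_2 = 0$ in every case, and Proposition~\ref{prop-alex-det} yields $\det(L) = 0$.

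The principal obstacle is the first step: justifying that each sublink of a split CC link is itself CC, so that a split CC diagram can actually be assembled. This is where one leans on Proposition~\ref{prop:equiv} and the ability to realize the two sublinks in disjoint parts of the ambient surface. Once this is in hand, the block-matrix analysis in the remaining steps is essentially mechanical linear algebra.
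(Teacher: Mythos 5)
Your proof uses the same central mechanism as the paper: realize $L$ by a split checkerboard-colorable diagram so that the coloring matrix decomposes as $B(D) = B(D_1) \oplus B(D_2)$, observe that each block has rows summing to zero, and conclude that every $(n-1)\times(n-1)$ minor vanishes. You are somewhat more careful than the paper's very terse proof in two respects --- you actually justify that a split CC diagram exists by separating the homology classes of the two sublinks via Proposition~\ref{prop:equiv}, and you spell out the row/column bookkeeping for the minor (the paper simply asserts the conclusion after noting $\det B_1 = \det B_2 = 0$) --- but the underlying argument is the same one.
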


\begin{proof}
Suppose $D=D_1 \cup D_2$ is a split checkerboard colorable diagram for $L$.
In each row of the coloring matrix, the non-zero elements are either $2,-1,-1$ or $1,-1$. It follows the rows add up to zero. We consider a simple closed curve in the plane which separates $D$ into two parts. It follows that the coloring matrix $B=B(D)$ admits a $2 \times 2$ block decomposition of the form
$$B=\begin{bmatrix}
B_1 & 0 \\ 
0 & B_2
\end{bmatrix},$$
where $B_1$ and $B_2$ are the coloring matrices for $D_1$ and $D_2,$ respectively. Since $\det(B_1) =0=\det(B_2)$, it follows that  the matrix obtained by removing a row and column from $B$ also has determinant zero.      
\end{proof}

The next result is the virtual analogue of Bankwitz's theorem \cite{Bankwitz}. For a proof, see \cite{Karimi}. 

\begin{theorem}[Theorem 5.16, \cite{Karimi}] \label{link determinant}
Let $L$ be a  virtual link which is represented by a connected alternating diagram $D$ with $n\geq 2$ classical crossings. Suppose further that $D$ has no nugatory crossings. Then $\det(L)\geq n.$
\end{theorem}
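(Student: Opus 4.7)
The plan is to prove this by adapting the classical proof of Bankwitz's theorem to the virtual setting. The key idea is to express $\det(L)$ as a spanning tree count of the Tait graph of $D$, and then invoke a purely graph-theoretic lemma bounding that spanning tree count below by the number of edges.

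First I would pass to the Tait graph. By Kamada's theorem (discussed in the excerpt), every alternating virtual link diagram is checkerboard colorable, so from $D$ we may form its black Tait graph $\Ga$ embedded in the Kamada--Kamada surface $\Si_D$, with one vertex per black region of $\Si_D \sm D$ and one edge per classical crossing of $D$; in particular $|E(\Ga)| = n$. Because $D$ is connected, $\Ga$ is connected, and because $D$ has no nugatory crossings, $\Ga$ is loopless and bridgeless (the two species of nugatory crossings correspond, dually, to loops and bridges of the black and white Tait graphs).

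Next I would identify the determinant with a spanning tree count $\tau(\Ga)$. The coloring matrix $B(D) = A(D)|_{t=-1}$ is built from purely local contributions at each classical crossing of $D$, and virtual crossings are invisible to the Wirtinger presentation, so row and column operations keyed to the arcs $a_1, \dots, a_m$ should reduce $B(D)$ to (a block equivalent to) the signed Laplacian of $\Ga$. Kirchhoff's matrix-tree theorem then gives that the relevant $(n-1) \times (n-1)$ minor equals $\pm \tau(\Ga)$, and combined with Proposition~\ref{prop-alex-det} this yields $\det(L) = \tau(\Ga)$.

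The proof then concludes via the purely combinatorial lemma that $\tau(\Ga) \geq |E(\Ga)|$ whenever $\Ga$ is a connected, loopless, bridgeless graph. This is proved by induction on $|E(\Ga)|$ using the deletion--contraction identity $\tau(\Ga) = \tau(\Ga \sm e) + \tau(\Ga / e)$ applied to a judiciously chosen edge, with base cases (two parallel edges, a triangle) giving $\tau = 2$ and $\tau = 3$ respectively. I expect the main obstacle to be the identification $\det(L) = \tau(\Ga)$ in the virtual setting: the matrix-tree theorem is purely combinatorial and the coloring matrix really is built only from classical crossings, but in the virtual case $\Ga$ lives on a surface of positive genus, and one must carefully track which minor of $B(D)$ is being computed and verify that the row and column reductions really do assemble into the Laplacian of $\Ga$, not some twisted variant. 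Once this identification is made, both the matrix-tree theorem and the graph-theoretic lemma apply without change, since they depend only on $\Ga$ as an abstract graph.
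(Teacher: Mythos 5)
There is a genuine gap, and it is exactly the one you flagged as the likely obstacle, but it is worse than you anticipate. Your argument rests on the premise that ``$D$ has no nugatory crossings'' forces the Tait graph $\Ga$ to be loopless (and, dually, bridgeless), as in the planar case. On a surface this is false: a crossing at which a region self-abuts gives a loop $\ga$ in the corresponding Tait graph, and that crossing is nugatory \emph{only if} $\ga$ is a separating curve on $\Si$. If $\ga$ is non-separating, the crossing is not nugatory, yet $\Ga$ still has a loop. The paper itself works through precisely this situation in the proof of Proposition~\ref{prop:adequate}: for a reduced alternating diagram, a self-abutting cycle of $S_A$ produces a loop $\ga$ in the Tait graph, and the absence of nugatory crossings only guarantees $\ga$ is non-separating --- not that it is absent. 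So the hypothesis ``connected, loopless, bridgeless'' feeding your deletion--contraction induction is simply not available.

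This collapses both legs of the argument. First, loops contribute nothing to $\tau(\Ga)$, so with $\ell$ loops present the combinatorial lemma gives at best $\tau(\Ga) = \tau(\Ga\sm\{\text{loops}\}) \geq n - \ell$, which is not $\geq n$. Second, the identification $\det(L)=\tau(\Ga)$ itself does not reduce to the matrix--tree theorem once loops enter: the relevant quadratic form on a checkerboard surface in $\Si\times I$ is a Gordon--Litherland-type form (cf. \cite{Boden-Chrisman-Karimi-2019}), and the diagonal contribution of a self-abutting band records a framing in $\Si\times I$, which a graph Laplacian sets to zero by fiat. In other words, on a surface the coloring matrix sees the embedding of $\Ga$, not just its underlying abstract graph, so row reductions to a Laplacian are not ``purely combinatorial'' here. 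A correct proof has to account for the non-separating self-abutments rather than excluding them; the paper defers to \cite{Karimi}, Theorem~5.16, for the actual argument.
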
   

\begin{corollary} \label{cor:split}
Suppose a virtual link $L$ admits an alternating diagram $D$ without nugatory crossings. Then $L$ is a split virtual link if and only if  $D$ is a split diagram.
\end{corollary}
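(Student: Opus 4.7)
The plan is to chain together three results proved immediately above: Kamada's theorem that alternating virtual links are checkerboard colorable, Proposition \ref{split det} (split implies zero determinant), and Theorem \ref{link determinant} (the virtual Bankwitz bound $\det(L) \geq n$).

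The backward implication requires no work: if $D$ is a split diagram, then by Definition \ref{split-link}, $L$ is a split virtual link.

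For the forward implication I would argue by contraposition. Assume for contradiction that $L$ is split but that $D$ is connected. Since $D$ is alternating, Kamada's theorem gives that $L$ is checkerboard colorable, and then Proposition \ref{split det} yields $\det(L) = 0$. On the other hand, by hypothesis $D$ is a connected alternating diagram with no nugatory crossings, so (assuming $n \geq 2$ classical crossings) Theorem \ref{link determinant} gives $\det(L) \geq n \geq 2$. This contradicts $\det(L) = 0$, forcing $D$ to be disconnected, i.e.\ split.

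The main obstacle, such as it is, lies in the small-$n$ edge cases where Theorem \ref{link determinant} does not directly apply. If $n = 0$, then $D$ has only virtual crossings, and since a connected diagram with no classical crossings represents an unknot (after detour moves), $L$ has only one component and hence is not split at all; this case is vacuous. If $n = 1$, the single classical crossing cannot be nugatory by hypothesis, and on a connected diagram this forces $L$ to be a nontrivial virtual knot (again a single component), which likewise cannot be split. Thus the only substantive case is $n \geq 2$, where the three-step citation argument above closes the proof.
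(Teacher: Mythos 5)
Your proof matches the paper's argument exactly: backward direction is immediate from Definition \ref{split-link}, and forward direction combines the virtual Bankwitz bound (Theorem \ref{link determinant}) with Proposition \ref{split det} via the link determinant. One small quibble with your edge-case analysis: for $n=1$ a connected alternating diagram with no nugatory crossings need not be a one-component diagram — the virtual Hopf link in Figure \ref{fig:examples} is exactly such a two-component, one-crossing diagram — so the claim that ``this forces $L$ to be a nontrivial virtual knot'' is not correct as stated. The cleanest way to close that case is to observe that when $n=1$ and $D$ is connected one has $m=n+k-1=1$, so the $(n-1)\times(n-1)$ minor of $B(D)$ is the empty $0\times 0$ matrix with determinant $1$, giving $\det(L)=1\neq 0$ directly and hence $L$ not split by Proposition \ref{split det}; in fact this same observation shows that the paper's own invocation of Theorem \ref{link determinant} for all $n>0$ (when the theorem is stated only for $n\geq 2$) is harmless.
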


\begin{proof}
Clearly, if $D$ is a split diagram, then $L$ is a split link. Suppose then that $L$ admits an alternating diagram $D$ that is not split and has $n=n(D) >0$ crossings. (If $n=0$, then $D$ has one component and is an unknot diagram.) Theorem \ref{link determinant} implies that $\det(L)\geq n$. Hence $\det(L)\neq 0$, and Proposition \ref{split det} shows that $L$ is not split.
\end{proof}

\section{The homological Kauffman bracket} \label{sec-2}
In this section, we recall the definition of the homological Kauffman bracket from \cite{krushkal-2011}. It is defined for link diagrams in thickened surfaces and is an invariant of \emph{regular isotopy} of unoriented links. We introduce a notion of adequacy for link diagrams on surfaces and use the homological bracket to prove that adequate link diagrams have minimal crossing number.

\subsection{States and their homological rank}
Let $L$ be a link in $\Si \times I$ with surface link diagram $D$ on $\Si$. Suppose further that $D$ has $n$ crossings. For each crossing $c_i$ of $D$, there are two ways to resolve it. One is called the \emph{$A$-smoothing} and the other is the \emph{$B$-smoothing}, according to Figure \ref{resolving}.
 
\begin{figure}[!ht]
\centering\includegraphics[height=24mm]{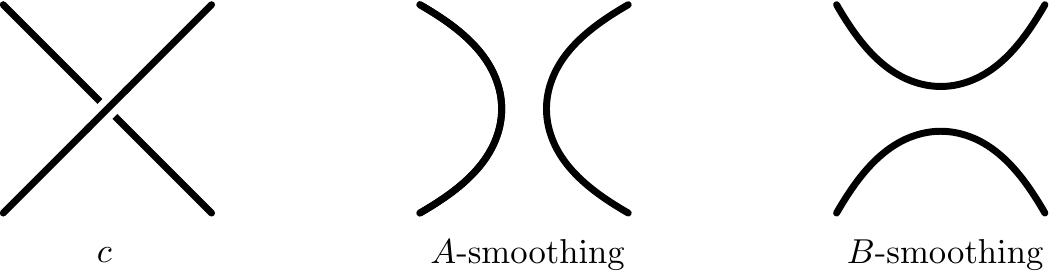}
\caption{The $A$- and $B$-smoothing of a crossing.}\label{resolving}
\end{figure}

A \emph{state} is a collection of simple closed curves on $\Si$ which results from smoothing each of the crossings of $D$. Thus, a state $S$ is just a link diagram on $\Si$ with no crossings. Since there are two ways to smooth each crossing, there are $2^n$ states. We will use $\sS = \sS(D)$ to denote the space of all states of $D$. Ordering the crossings $\{c_1,\ldots, c_n\}$ of $D$ in an arbitrary way, we can identify each state with a binary word $\ep_1 \ep_2 \cdots \ep_n$ of length $n$, where $\ep_i=0$ indicates an $A$-smoothing and $\ep_i = 1$ a $B$-smoothing at the crossing $c_i$.

Given a state $S \in \sS$, let $a(S)$ be the number of $A$-smoothings and $b(S)$ the number of $B$-smoothings, and let $|S|$ be the number of cycles in $S$. Define  
\begin{eqnarray*}
k(S) &=& \dim \left({\rm kernel}\left(i_* \colon H_1(S) \lto H_1(\Si)\right)\right), \\ 
r(S) &=& \dim \left({\rm image}\left(i_* \colon H_1(S) \lto H_1(\Si)\right)\right),
\end{eqnarray*}
where $i \colon S \to \Si$ is the inclusion map. We call $r(S)$ the \emph{homological rank}  of the state $S$, and we note that $k(S)+r(S) = b_1(S)=|S|.$ 

Since $\Si$ is a compact, closed, oriented surface, the intersection pairing on $H_1(\Si)$ is symplectic. A given collection of disjoint simple closed curves on $\Si$ must therefore map into an isotropic subspace of $H_1(\Si)$. It follows that the homological rank of any state $S$  satisfies $0 \leq r(S) \leq g,$ where $g$ is the genus of $\Si.$ 

The \emph{homological Kauffman bracket} is denoted $\lb \, \cdot \, \rb_\Si$ and defined by setting 
\begin{equation}
\lb D \rb_\Si =\sum_{S\in \sS} A^{(a(S)-b(S))}(-A^{-2}-A^2)^{k(S)} z^{r(S)}.
\label{eqn:bracket}
\end{equation}
Here, $z$ is a formal variable which keeps track of the homological rank of $S$. Upon setting $z = -A^{-2}-A^2$ and dividing one factor of $-A^{-2}-A^{2}$, one recovers the usual Kauffman bracket. 

\bigskip
The following lemmas study the effect of the various diagrammatic moves on the homological Kauffman bracket. These will be applied to show that it is invariant under regular isotopy of links in surfaces. The first is an immediate consequence of Equation  \eqref{eqn:bracket}, and the proof is left to the reader. 

In the first lemma, $\KPC$ denotes a simple closed curve on $\Si$.

\begin{lemma} \label{lemma:resolution}
The homological Kauffman bracket satisfies the following identities.
\begin{enumerate}
\item[(i)] If $\KPC$ is homologically trivial, then $\lb \KPC \rb_\Si= -A^2-A^{-2}.$
Otherwise, $\lb \KPC \rb_\Si= z.$
\item[(ii)]If $\KPC$ is homologically trivial, then $\lb  \KPC \sqcup L \rb_\Si = (-A^2-A^{-2}) \lb L \rb_\Si.$
\item[(iii)]
$\lb\KPX\rb_\Si=
  A\lb\KPA\rb_\Si + A^{-1} \lb \KPB \rb_\Si$.
\end{enumerate}
\end{lemma}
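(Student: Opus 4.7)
The plan is to verify each of the three identities directly from the definition in equation \eqref{eqn:bracket}, since a single simple closed curve admits only the trivial smoothing and the skein relation in (iii) is just a partition of the state sum according to the two choices at one crossing.

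For part (i), the diagram $\KPC$ has no crossings, so it has exactly one state $S$, namely itself, with $a(S) = b(S) = 0$ and $|S| = 1$. I would split into cases according to whether the curve is null-homologous in $\Si$. If $\KPC$ is homologically trivial then $r(S) = 0$ and $k(S) = 1$, so \eqref{eqn:bracket} gives $A^0(-A^{-2}-A^2)^1 z^0 = -A^2-A^{-2}$; if it is not homologically trivial then $r(S) = 1$ and $k(S) = 0$, so \eqref{eqn:bracket} gives $A^0(-A^{-2}-A^2)^0 z^1 = z$.

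For part (ii), I would observe that the states of $\KPC \sqcup L$ are in bijection with the states of $L$ by adjoining the extra null-homologous curve. For corresponding states $S$ of $L$ and $S'$ of $\KPC \sqcup L$, the smoothing counts agree ($a(S') = a(S)$, $b(S') = b(S)$) and the homological quantities satisfy $r(S') = r(S)$ and $k(S') = k(S) + 1$, since the added curve contributes a single cycle lying in the kernel of $i_*$. Therefore each summand in $\lb \KPC \sqcup L \rb_\Si$ equals the corresponding summand in $\lb L \rb_\Si$ multiplied by $-A^{-2}-A^2$, and the identity follows by factoring this common term out of the sum.

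For part (iii), I would fix the distinguished crossing $c$ shown in $\KPX$ and partition $\sS(\KPX)$ according to the choice of smoothing at $c$. States smoothing $c$ as an $A$-smoothing correspond bijectively to states of $\KPA$, with $a$ increased by one and $b$, $r$, $k$ unchanged; similarly, $B$-smoothings at $c$ correspond to states of $\KPB$ with $b$ increased by one. Splitting the sum in \eqref{eqn:bracket} accordingly extracts a factor of $A$ from the first group and $A^{-1}$ from the second, giving exactly $A\lb \KPA \rb_\Si + A^{-1}\lb \KPB \rb_\Si$. The only mildly delicate point in the entire proof is making sure the homological data $r(S)$, $k(S)$ really are unchanged under the bijections in (ii) and (iii); this is immediate because the smoothing changes take place inside a disk on $\Si$, so they do not alter the homology class in $H_1(\Si)$ of any cycle of the state.
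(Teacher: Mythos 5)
Your proof is correct, and it follows exactly the route the paper intends: the authors state that the lemma ``is an immediate consequence of Equation~\eqref{eqn:bracket}, and the proof is left to the reader,'' and your direct verification from the state-sum definition is precisely that omitted argument. One small remark on phrasing: your closing sentence says the homological data ``really are unchanged'' under the bijection, but as you correctly compute in part (ii), $k$ does change (by $+1$); what is unchanged, as you make clear, is the image in $H_1(\Si)$, hence $r(S)$.
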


\begin{lemma} \label{lemma:RM-invariance1}
If a link diagram on a surface is changed by a Reidemeister type 1 move, then the homological Kauffman bracket changes as follows: 
\begin{equation} \label{eqn:kink}
\lb \!\!\!\! \KPCB \rb_\Si = -A^3 \lb\KPLA \rb_\Si \quad \text{ and } \quad \lb\KPCA \!\!\!\! \rb_\Si = -A^{-3} \lb\KPLA \rb_\Si.
\end{equation}
\end{lemma}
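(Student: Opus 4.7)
The plan is to apply the state-sum resolution (Lemma~\ref{lemma:resolution}(iii)) at the single crossing of the kink, and then invoke the homological triviality of the resulting small loop (Lemma~\ref{lemma:resolution}(ii)) to reduce to a classical one-variable computation. The key observation is that the kink happens inside a small disk neighborhood in $\Si$, so the extra circle created by one of the two smoothings bounds an embedded disk and is therefore null-homologous; for this reason no genuinely ``surface-theoretic'' feature of the bracket enters, and the argument mirrors the classical Kauffman computation.

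Concretely, I would first apply Lemma~\ref{lemma:resolution}(iii) at the crossing of $\KPCB$. Inspecting the diagram shows that one smoothing yields the crossingless diagram $\KPLA$ together with a small disjoint circle bounding a disk in $\Si$, while the other smoothing yields just $\KPLA$; for the positive kink $\KPCB$ it is the $A$-smoothing that produces the extra circle. Since that circle is null-homologous, Lemma~\ref{lemma:resolution}(ii) gives $\lb \KPC \sqcup \KPLA \rb_\Si = (-A^2-A^{-2}) \lb \KPLA \rb_\Si$, so
$$\lb \KPCB \rb_\Si \;=\; A(-A^2-A^{-2})\lb \KPLA \rb_\Si + A^{-1} \lb \KPLA \rb_\Si \;=\; -A^3 \lb \KPLA \rb_\Si.$$
The second identity is obtained by the same argument applied to $\KPCA$; here the roles of the two smoothings are interchanged, so it is the $B$-smoothing that produces the small null-homologous circle, giving the factor $A + A^{-1}(-A^2-A^{-2}) = -A^{-3}$.

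The only point where the surface setting could in principle intervene is the assertion that the small loop produced by one of the two smoothings is null-homologous in $\Si$. This is immediate because the kink sits inside an embedded disk, within which the loop bounds, so Lemma~\ref{lemma:resolution}(ii) applies verbatim and the whole calculation collapses to the familiar classical one. Thus there is no substantive obstacle to overcome.
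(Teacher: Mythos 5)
Your proposal is correct and follows essentially the same route as the paper: apply Lemma~\ref{lemma:resolution}(iii) at the kink's single crossing, observe that one smoothing produces a small disjoint null-homologous circle so that Lemma~\ref{lemma:resolution}(ii) applies, and simplify. The only difference is that you spell out explicitly why the small circle is null-homologous (it bounds a disk in a local chart), a point the paper leaves implicit.
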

\begin{proof}
To see the first identity, apply Lemma \ref{lemma:resolution} (iii) to the left-hand side of Equation \eqref{eqn:kink} and simplify using Lemma \ref{lemma:resolution} (ii). The first equation follows immediately. The second identity follows by a similar argument; details are left to the reader.
\end{proof}

\begin{lemma} \label{lemma:RM-invariance2}
If a link diagram on a surface is changed by a Reidemeister type 2 or 3 move, then the homological Kauffman bracket is unchanged, i.e., we have
$$\text{(i)} \; \lb\KPD\rb_\Si = \lb\KPE\rb_\Si \quad \text{ and } \quad
\text{(ii)} \;\lb\KPXY\rb_\Si = \lb\KPXZ\rb_\Si$$
\end{lemma}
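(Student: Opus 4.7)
The plan is to adapt the classical proof of Reidemeister 2 and 3 invariance for the Kauffman bracket, using the skein identity of Lemma~\ref{lemma:resolution}(iii) together with the loop-removal rule of Lemma~\ref{lemma:resolution}(ii). The only substantive issue beyond the classical setting is to verify that every simple closed curve produced by a smoothing inside a Reidemeister disk is null-homologous in $\Si$. This holds because each Reidemeister move is supported in an embedded disk $D \subset \Si$, so any new simple closed curve produced by a smoothing inside $D$ is contained in $D$ and bounds a disk in $\Si$. Consequently, by Lemma~\ref{lemma:resolution}(ii), every such curve contributes the scalar factor $-A^2-A^{-2}$ rather than the homological variable $z$.

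For part (i), I would apply the skein relation successively to each of the two crossings of $\KPD$, writing the bracket as a sum of four terms with coefficients $A^2$, $1$, $1$, and $A^{-2}$. Two of the four resulting local pictures are planar isotopic to $\KPE$, while each of the other two contains one extra small simple closed curve produced by the smoothing. By the observation above, this extra curve is null-homologous, so Lemma~\ref{lemma:resolution}(ii) replaces it by a factor of $-A^2-A^{-2}$. The four contributions then combine by the same arithmetic identity as in the classical Kauffman bracket proof, collapsing to $\lb\KPE\rb_\Si$.

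For part (ii), I would apply the skein relation to one distinguished crossing on each side of the RM3 identity, namely the crossing across which the third strand is moved. On both sides the $A$-smoothing term yields a pair of diagrams that are planar isotopic to each other, while the $B$-smoothing term yields an RM2 configuration on each side; by part (i), the resulting $B$-smoothing brackets also match, and summing the two skein terms gives the desired equality. The main obstacle throughout is ensuring that the homological ranks $r(S)$ of the states are unaffected by the local resolutions, but this follows from the observation in the first paragraph: the inclusion-induced map $H_1(S) \to H_1(\Si)$ depends only on the portion of $S$ lying outside the Reidemeister disk, which is unchanged by the move, so any discrepancy between states on the two sides is absorbed entirely into the factor $(-A^2-A^{-2})^{k(S)}$ and the bookkeeping reduces to the classical one.
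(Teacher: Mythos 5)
Your overall strategy---apply the skein relation of Lemma~\ref{lemma:resolution}(iii), observe that every circle produced by smoothings inside a Reidemeister disk is null-homologous in $\Si$, and then invoke Lemma~\ref{lemma:resolution}(ii) to reduce to the classical Kauffman-bracket arithmetic---is exactly the approach of the paper, and the null-homology observation is the one new ingredient needed beyond the classical case.

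However, your execution of the RM2 step is incorrect and would not close if carried out literally. Resolving both crossings of $\KPD$ gives four terms with coefficients $A^2,1,1,A^{-2}$, but of the four resulting local pictures only \emph{one} is isotopic to $\KPE$; the other three all carry the opposite boundary pairing (joining the two left endpoints to each other and the two right endpoints to each other), and exactly one of those has an extra small circle. The identity then comes from $A^2 + (-A^2-A^{-2}) + A^{-2}=0$ applied to the three ``wrong-pairing'' contributions, leaving the single $\KPE$ term with coefficient $1$. With your claimed split (two copies of $\KPE$ and two copies of $\KPE$ with an extra circle), no assignment of the four coefficients yields $\lb\KPE\rb_\Si$. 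A related claim in your last paragraph is also false: the map $H_1(S)\to H_1(\Si)$ does \emph{not} depend only on the portion of $S$ outside the Reidemeister disk, because the internal boundary pairing changes how the exterior arcs close up into components---e.g.\ an RM2 bigon between two parallel essential curves on a torus gives $r=1$ for the $\KPE$ pairing and $r=0$ for the other pairing. What saves the computation is not agreement of those brackets but the cancellation of the wrong-pairing terms, which holds regardless of their homological ranks. Your RM3 argument (matching the $A$-terms by isotopy and the $B$-terms by part~(i)) is fine and does not in fact require the erroneous homology claim.
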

\begin{proof}
To prove (i), apply Lemma \ref{lemma:resolution} (iii) twice to the diagram on the left and simplify using Lemma \ref{lemma:resolution} (ii). The identity (i) follows. 

To prove (ii), apply Lemma \ref{lemma:resolution} (iii) to the lower crossing in the diagram on the left and simplify, using the fact that $\lb \, \cdot \, \rb_\Si$ is invariant under Reidemeister 2 moves.  The identity (ii) then follows.
\end{proof}

Lemma \ref{lemma:RM-invariance2} implies that the bracket $\lb D \rb_\Si$ is an invariant of unoriented links in $\Si \times I$ up to \emph{regular} isotopy, and in Definition \ref{defn:unreduced} we use Lemma \ref{lemma:RM-invariance1} to define
a normalization which is an invariant of oriented links in 
$\Si \times I$ up to isotopy. 

\begin{figure}[!ht]  
\centering
\includegraphics[height=26mm]{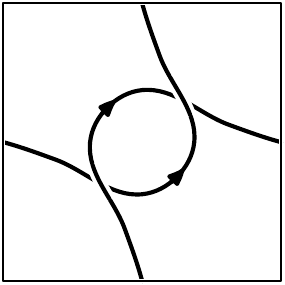}\hspace{.8cm}
\includegraphics[height=26mm]{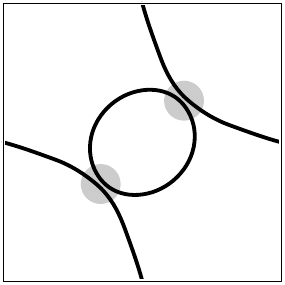}\hspace{.8cm}
\includegraphics[height=26mm]{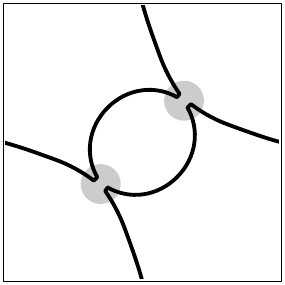} 
\caption{A minimal genus diagram of the virtual trefoil in the torus, and the states $S_A$ and $S_B$.} \label{fig:2-1-in-torus}
\end{figure}

\begin{example} \label{example-2-1-a}
The virtual trefoil $K$ (see Figure \ref{fig:examples}) admits a minimal genus diagram $D$ on the torus $T$, which has two crossings.  The diagram $D$ is depicted in  Figure \ref{fig:2-1-in-torus}, along with the state $S_A$ of all $A$ smoothings and the state $S_B$ of all $B$-smoothings. Clearly $|S_A|=2, r(S_A)=1$ and $|S_B|=1, r(S_B)=1$. One can further show that the other two states $AB, BA$ have $|S|=1$ and $r(S)=1.$ Thus,  $\lb D \rb_T = A^2(-A^2-A^{-2})z + 2 z +A^{-2}z$. 
\end{example}
Notice that the cube of resolutions for this knot has single cycle smoothings. These occur whenever there are two states $S,S'$ with $|S|=|S'|$ which are identical everywhere except one crossing. For checkerboard colorable diagrams, one can show that $|S|=|S'| \pm 1$ whenever $S,S'$ are two states that differ only at one crossing (for a proof, see \cite{Rushworth-2018} or \cite[Proposition 6.14]{Karimi}). Therefore, the cube of resolutions of a checkerboard colorable diagram never has any single cycle smoothings.

\subsection{Adequate diagrams}

Next, we  introduce the notions of $A$-adequate and $B$-adequate for link diagrams on a surface. In the following, for a given link diagram $D$ on a surface, let $S_{A}$ denote the all $A$-smoothing state and $S_{B}$ the all $B$-smoothing state. 

We take a moment to review the state-sum formulation for the homological Kauffman bracket. Given a link diagram $D$ on $\Si$ and a state $S \in \sS(D)$, let
\begin{equation} \label{eqn:single-bracket}
 \lb D\,|\,S \rb_\Si =A^{(a(S)-b(S))}(-A^{-2}-A^{2})^{k(S)}z^{r(S)}. 
\end{equation}
Then  we can write
\begin{equation} \label{eqn:state-sum}
\lb D \rb_\Si = \sum_{S\in \sS} \lb D\,|\,S \rb_\Si.
\end{equation}

\begin{definition} \label{defn:adequate}
The diagram $D$ is called \emph{$A$-adequate}, if for any state $S'$ with exactly one $B$-smoothing, we have $k(S') \leq k(S_A)$. The diagram $D$ is called \emph{$B$-adequate} if, for any state $S'$ with exactly one $A$-smoothing, we have $k(S') \leq k(S_B)$. A diagram is called \emph{adequate} if it is both $A$- and $B$-adequate.
\end{definition}  

Recall that for a classical link, a link diagram $D$ is ``plus-adequate'' if the all $A$-smoothing state $S_A$ does not contain any self-abutting cycles, and it is ``minus-adequate'' if the same holds for the all $B$-smoothing state $S_B$ \cite[Definition 5.2]{Lickorish}. Thus, if a diagram is plus-adequate then it is $A$-adequate, and if it is minus-adequate then it is $B$-adequate.

However, a diagram can be $A$-adequate without being plus-adequate, and it can be $B$-adequate without being minus-adequate. Indeed, our notion of adequacy is less restrictive because it allows self-abutting cycles in $S_A$ provided that $k(S')\leq k(S_A)$ for the new state $S'$ obtained by switching the smoothing. In case $|S'| = |S_A|+1$, this is equivalent to the requirement that $r(S') = r(S_A)+1.$ There is a similar interpretation for $B$-adequacy.

In \cite[p.1089]{Kamada-2004}, Kamada defines a virtual link diagram to be \emph{proper} if four distinct regions of the complement meet at every crossing. Any virtual link diagram that is proper is automatically adequate according to Definition \ref{defn:adequate}, but the converse is not true in general. Indeed, in Proposition \ref{prop:adequate}, we will show that all reduced alternating link diagrams on surfaces are adequate, whereas most alternating knot diagrams on surfaces are not proper.

We use $d_{\max}$ and $d_{\min}$ to denote the maximal and minimal degree in the variable $A$.  For example, $d_{\max} (\lb D\,|\,S \rb_\Si) = a(S)-b(S)+2k(S)$ and $d_{\min} (\lb D\,|\,S \rb_\Si) = a(S)-b(S)-2k(S)$. (Note that the homological variable $z$ is disregarded in degree considerations.)

\begin{lemma} \label{lemma3.6}
If $D$ is a surface link diagram on $\Si$ with $n$ crossings, then 
\begin{itemize}
\item[(i)] $d_{\max}(\lb D \rb_\Si )\leq n+2k(S_A)$, with equality if $D$ is $A$-adequate, 
\item[(ii)] $d_{\min}(\lb D \rb_\Si )\geq -n-2k(S_B)$, with equality if $D$ is $B$-adequate.
\end{itemize}
\end{lemma}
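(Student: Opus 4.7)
The plan is to parallel the classical Kauffman-Murasugi-Thistlethwaite argument, bounding $d_{\max}(\lb D\rb_\Si)$ term-by-term from the state-sum \eqref{eqn:state-sum} and then exhibiting $S_A$ as the unique contributor to the extremal $A$-degree. By the symmetry $A\mapsto A^{-1}$, which interchanges $S_A\leftrightarrow S_B$, $A$-adequacy $\leftrightarrow$ $B$-adequacy, and $d_{\max}\leftrightarrow -d_{\min}$, it suffices to prove (i).

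For any state $S$ with $j$ $B$-smoothings one has $a(S)-b(S)=n-2j$, so by \eqref{eqn:single-bracket} the summand $\lb D|S\rb_\Si$ has maximal $A$-degree $n-2j+2k(S)$. The key technical ingredient, replacing the classical fact $\bigl||S|-|S'|\bigr|\leq 1$, is the claim
\begin{equation}\label{eqn:plan-key-claim}
|k(S)-k(S')|\leq 1 \quad\text{whenever $S,S'\in\sS$ differ at a single crossing.}
\end{equation}
Given \eqref{eqn:plan-key-claim}, a direct induction on $j$, starting from $k(S_A)$ at $j=0$, yields $k(S)\leq k(S_A)+j$ for every $S$ with $j$ $B$-smoothings; hence $d_{\max}(\lb D|S\rb_\Si)\leq n+2k(S_A)$ and the inequality in (i) follows. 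If $D$ is $A$-adequate, then for $j=1$ the adequacy bound $k(S)\leq k(S_A)$ replaces the weaker estimate from \eqref{eqn:plan-key-claim}, and the induction refines to $k(S)\leq k(S_A)+j-1$ for every $j\geq 1$. Thus every $S\neq S_A$ contributes only to $A$-degrees $\leq n+2k(S_A)-2$, while $\lb D|S_A\rb_\Si = A^n(-A^{-2}-A^2)^{k(S_A)}z^{r(S_A)}$ contains the single top-degree monomial $(-1)^{k(S_A)}A^{n+2k(S_A)}z^{r(S_A)}$, which is nonzero in $\ZZ[A^{\pm 1},z]$. This monomial therefore survives in $\lb D\rb_\Si$ and the equality $d_{\max}(\lb D\rb_\Si)=n+2k(S_A)$ follows.

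The main obstacle is \eqref{eqn:plan-key-claim}. On a surface the classical identity $|S|=|S'|\pm 1$ need not hold (a crossing strand may reconnect to itself without changing the cycle count), so one must work with the refined invariant $k=|S|-r$. A local analysis at the crossing disk, based on which of the three pairings of the four strand-ends is realized by the outside arcs, gives $\bigl||S|-|S'|\bigr|\leq 1$. For the homological input, $S+S'$ is a $1$-cycle supported in a small disk around the crossing and hence bounds, so $[S]=[S']$ in $H_1(\Si;\ZZ/2)$. The same local analysis shows that the cycles of $S$ and $S'$ lying outside the crossing disk coincide, while the cycle(s) of $S$ passing through the crossing and the cycle(s) of $S'$ passing through it satisfy a single $\ZZ/2$-linear homological relation. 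It follows that $i_*H_1(S)$ and $i_*H_1(S')$ differ in dimension by at most one, so $|r(S)-r(S')|\leq 1$, and combining with $\bigl||S|-|S'|\bigr|\leq 1$ and $k=|S|-r$ yields $|k(S)-k(S')|\leq 1$, as required.
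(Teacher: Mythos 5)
Your proof is correct and follows essentially the same route as the paper's: both bound $d_{\max}$ term-by-term from the state sum, establish a local estimate on $k$ under a single crossing switch by examining the cobordism type (fusion, fission, or single-cycle smoothing), and then chain, using $A$-adequacy to upgrade the bound at states with exactly one $B$-smoothing, with (ii) reduced to (i) by the $A\mapsto A^{-1}$ symmetry that the paper also invokes.

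One sentence in your argument for the key claim \eqref{eqn:plan-key-claim} needs tightening. From $\bigl||S|-|S'|\bigr|\leq 1$, $|r(S)-r(S')|\leq 1$, and $k=|S|-r$ alone one can only deduce $|k(S)-k(S')|\leq 2$; for instance a fission paired with a drop in $r$ would give $k(S')=k(S)+2$. What rules this out is the coupling implicit in your preceding sentences and made explicit in the paper's case analysis: in the fusion case $i_*H_1(S')\subseteq i_*H_1(S)$, so $r$ cannot increase while $|S|$ decreases; in the fission case the containment reverses, so $r$ cannot decrease while $|S|$ increases; and in the single-cycle case $|S|=|S'|$ and the equality $[S]=[S']$ forces the crossing-adjacent cycle of $S$ and of $S'$ to be homologous, so $r(S)=r(S')$ as well. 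Stating the directional constraint in each case, as the paper does when it lists the admissible pairs of changes to $r$ and $k$, is what actually delivers $k(S')\leq k(S)+1$ and hence the inequality you need.
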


\begin{proof}
Suppose $S$ is a state for $D$ with an $A$-smoothing at a given crossing but otherwise arbitrary, and let $S'$ be the state obtained by switching it to a $B$-smoothing at the given crossing. Clearly, $a(S')=a(S)-1$ and $b(S')=b(S)+1$. Switching the crossing produces a cobordism from $S$ to $S'$, and there are three possibilities: (i) two cycles in $S$ join to form one cycle in $S'$, (ii) one cycle in $S$ splits to form two cycles in $S'$, or (iii) switching from $S$ to $S'$ involves a single cycle smoothing (see Figure \ref{Fig-pairofpants}). Notice that $|S'| =|S|-1$, $|S'| =|S|+1$, or $|S'| =|S|$ in cases (i), (ii), or (iii), respectively.

\begin{figure}[!ht]
\centering\includegraphics[height=32mm]{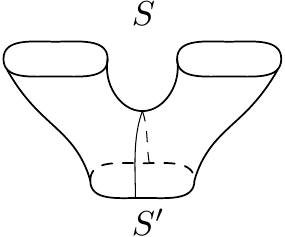}
\hspace{1cm}
\includegraphics[height=32mm]{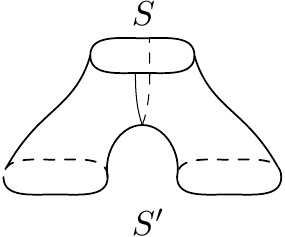}
\hspace{1.3cm}
\includegraphics[height=32mm]{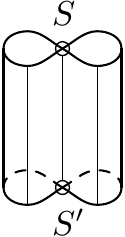}
\caption{The three types of cobordisms from $S$ to $S'$ include a fusion (left), fission (middle), and single cycle smoothing (right).} \label{Fig-pairofpants}
\end{figure}
 
Further, in case (i), either $r(S') =r(S)$ and $k(S') = k(S)-1$ or $r(S') =r(S)-1$ and $k(S') = k(S)$; and in case (ii), either $r(S') =r(S)$ and $k(S') = k(S)+1$ or $r(S') =r(S)+1$ and $k(S') = k(S)$. In case (iii), either $r(S') =r(S)$ and $k(S') = k(S)$ or $r(S') =r(S) \pm 1$ and $k(S') = k(S) \mp 1$. Since $a(S')-b(S')=a(S)-b(S)-2$ and $k(S') \leq k(S)+1$ in all three cases, we conclude that
\begin{equation} \label{eqn:repeat}
d_{\max}\left(\lb D \,|\,S' \rb_\Si\right) \leq d_{\max} \left(\lb D \,|\,S \rb_\Si\right).
\end{equation}

Clearly $d_{\max}(\lb D \,|\,S_A \rb_\Si )= n+2k(S_A)$ and $d_{\min}(\lb D \,|\,S_B \rb_\Si )= -n-2k(S_B)$. Since any state is obtained from $S_A$ by switching smoothings at a finite set of crossings, repeated application of Equation \eqref{eqn:repeat} gives that $d_{\max}(\lb D \,|\,S \rb_\Si )\leq  d_{\max}(\lb D \,|\,S_A \rb_\Si)$, and the inequality (i) follows.

Now suppose that $D$ is $A$-adequate and $S$ is a state with exactly one $B$-smoothing. Then $A$-adequacy implies that $k(S)\leq k(S_A)$. Since $a(S)-b(S)=n-2$, it follows that
\begin{equation} \label{eqn:top}
d_{\max}(\lb D \,|\,S \rb_\Si)  \leq  d_{\max}(\lb D \,|\,S_A \rb_\Si)-2.
\end{equation}
Any state $S'$ with two or more $B$-smoothings is obtained from a state $S$ with exactly one $B$-smoothing by switching the smoothings at the remaining crossings. Therefore, by Equations \eqref{eqn:repeat} and \eqref{eqn:top}, we find that  
$$d_{\max}(\lb D \,|\,S' \rb_\Si)  \leq d_{\max}(\lb D \,|\,S \rb_\Si) \leq  d_{\max}(\lb D \,|\,S_A \rb_\Si)-2.$$ Thus $d_{\max}(\lb D \rb_\Si) = n+2k(S_A),$ and this completes the proof of (i).

Statement (ii) follows by a similar argument. Alternatively, one can deduce (ii) directly from (i) using the observation that a diagram is $A$-adequate if and only if its mirror image is $B$-adequate. 
\end{proof}

Define the span of the homological Kauffman bracket by setting 
$$\spn (\lb D \rb_\Si) =  d_{\max}(\lb D \rb_\Si) - d_{\min}(\lb D \rb_\Si).$$
By Lemma \ref{lemma:RM-invariance2}, the homological Kauffman bracket is invariant under the second and third Reidemeister moves. Lemma \ref{lemma:RM-invariance1} implies that   $\spn (\lb D \rb_\Si)$ is also invariant under the first Reidemeister move. Therefore, it gives an invariant of the underlying link.

\begin{corollary} \label{cor-adequate}
If $D$ is a link diagram with $n$ crossings on a surface $\Si$, then 
$$\spn(\lb D\rb_\Si) \leq 2n+2k(S_{A})+2k(S_{B}),$$
with equality if $D$ is adequate.
\end{corollary}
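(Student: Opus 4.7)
The plan is to derive this corollary directly from Lemma \ref{lemma3.6}, which has already packaged the two halves of the span estimate. Specifically, Lemma \ref{lemma3.6}(i) gives the upper bound $d_{\max}(\lb D \rb_\Si) \leq n + 2k(S_A)$, and Lemma \ref{lemma3.6}(ii) gives the lower bound $d_{\min}(\lb D \rb_\Si) \geq -n - 2k(S_B)$. Subtracting these bounds yields
$$\spn(\lb D \rb_\Si) = d_{\max}(\lb D \rb_\Si) - d_{\min}(\lb D \rb_\Si) \leq \bigl(n + 2k(S_A)\bigr) - \bigl(-n - 2k(S_B)\bigr) = 2n + 2k(S_A) + 2k(S_B),$$
establishing the stated inequality.

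For the equality claim, I would observe that Lemma \ref{lemma3.6}(i) and (ii) assert equality under $A$-adequacy and $B$-adequacy, respectively. Since $D$ adequate means $D$ is both $A$-adequate and $B$-adequate (by Definition \ref{defn:adequate}), both bounds are simultaneously sharp, so the subtraction preserves equality and gives $\spn(\lb D \rb_\Si) = 2n + 2k(S_A) + 2k(S_B)$.

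The only subtlety worth flagging is that the variable $z$ should be treated as a formal parameter that does not contribute to $A$-degree, which is the convention already set up immediately before Lemma \ref{lemma3.6}; with that convention in place, the two degree bounds can be read off independently of the $z$-weights carried by individual states, and the subtraction is legitimate. There is no real obstacle here: the corollary is essentially bookkeeping on top of Lemma \ref{lemma3.6}.
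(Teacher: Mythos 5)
Your proof is correct and takes exactly the route the paper intends: the corollary is an immediate consequence of Lemma \ref{lemma3.6}, obtained by subtracting the $d_{\min}$ bound from the $d_{\max}$ bound, with the equality case following because adequacy makes both parts of the lemma equalities simultaneously. Your remark about $z$ being disregarded in degree computations is the correct convention (it is stated just before Lemma \ref{lemma3.6}), so there is nothing to add.
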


\begin{proposition} \label{prop:adequate}
Any reduced alternating diagram $D$ for a link in a thickened surface $\Si\times I$ is adequate. 
\end{proposition}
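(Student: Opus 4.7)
The plan is to exploit the checkerboard coloring that comes with cellular embedding and to reduce everything to a crossing-local analysis. First, since $D$ is reduced it is in particular cellularly embedded, so combined with the alternating hypothesis the complement $\Si \sm D$ admits a checkerboard coloring into white and black disks. Fixing the convention that the $A$-smoothing at each crossing separates the two white corners from the rest, the cycles of $S_A$ are in bijection with the white regions: each cycle is the smoothed boundary $\partial W$ of a unique white region $W$. Because every $W$ is a disk, $[\partial W] = 0$ in $H_1(\Si;\ZZ/2)$, so $r(S_A) = 0$ and $k(S_A) = |S_A|$. The identical statements hold for $S_B$ and the black regions.

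To verify $A$-adequacy I would fix a crossing $c$ and pass from $S_A$ to $S'$ by switching the $A$-smoothing at $c$ to a $B$-smoothing. Because $D$ is checkerboard colorable no single cycle smoothing can occur, so the transition is either a fusion or a fission. In the fusion case the two white corners at $c$ belong to distinct white regions $W_1 \neq W_2$, and the merged cycle bounds the disk obtained by gluing $W_1$ and $W_2$ across the central wedge of the new $B$-smoothing at $c$; therefore $r(S') = 0$ and $k(S') = k(S_A) - 1$.

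The fission case is where reducedness really enters. Here the two white corners at $c$ lie in a single region $W$, and the cycle $\partial W$ splits into two new cycles $\gamma_1, \gamma_2$, so $|S'| = |S_A|+1$. Pick an embedded arc $\alpha$ in the interior of $W$ joining the two corners of $W$ at $c$; since both endpoints coincide with $c$, $\alpha$ determines a simple closed curve $\ell$ in $\Si$. The non-nugatory hypothesis on $c$ is precisely the statement that $\ell$ does not separate $\Si$, so $[\ell] \neq 0$ in $H_1(\Si;\ZZ/2)$. The key geometric identity $[\gamma_1] = [\ell]$ is then verified as follows: $\alpha$ divides $W$ into two sub-disks $D_1, D_2$, and one checks that $\gamma_1 + \ell + \partial D_2$ is supported in a small neighborhood of $c$ and in fact bounds the small disk formed by three of the four wedges at $c$. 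It follows that $[\gamma_1]$ is nonzero and lies outside the (trivial) image of $H_1(S_A) \to H_1(\Si)$, so $r(S') = 1$ and $k(S') = (|S_A|+1)-1 = k(S_A)$. The mirror argument using $S_B$ and the black regions gives $B$-adequacy.

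I expect the main obstacle to be the homological identification $[\gamma_1] = [\ell]$: the new $B$-smoothing arc, the chord $\alpha$, and the two halves of $\partial W$ meet the crossing neighborhood at slightly different points, so assembling them into a 2-chain bounded by three of the four wedges at $c$ takes some care. Cellularity plays an essential role here, since it is what makes the image of $H_1(S_A) \to H_1(\Si)$ trivial and lets us conclude that any nonzero $[\gamma_1]$ contributes a genuinely new homology class to $S'$.
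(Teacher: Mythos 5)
Your proof is correct and follows essentially the same approach as the paper: exploit checkerboard colorability to show $S_A$ and $S_B$ are nullhomologous, rule out single-cycle smoothings, and in the fission case use that the crossing is not nugatory to conclude the newly created cycles are homologically nontrivial. The only cosmetic difference is that where you build the simple closed curve $\ell$ directly from a chord $\alpha$ in the self-abutting white region, the paper phrases the same curve as a loop in the (white) Tait graph $\Gamma$; and where you exhibit an explicit bounding disk in the fusion case, the paper quotes the case analysis already done in the proof of Lemma~\ref{lemma3.6}.
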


\begin{proof}
Any reduced alternating link diagram on a surface is checkerboard colorable, and one can choose the coloring so that the white regions are enclosed by the cycles of $S_A$ and the black regions by the cycles of $S_B.$ Since each cycle in $S_A$ bounds a white region, it follows that $S_A$ is homologically trivial. Thus $r(S_A)=0$ and $k(S_A)=b_1(S_A)=|S_A|.$  Similarly, since each cycle in $S_B$ bounds a black region, $S_B$ is also  homologically trivial. Thus $r(S_B)=0$ and $k(S_B)=b_1(S_B)=|S_B|.$

We will now show that $D$ is $A$-adequate. Suppose $S$ is a state with exactly one $B$-smoothing. Then $|S| = |S_A| \pm 1.$ (Since $D$ is checkerboard colorable, there are no single cycle smoothings.) If $|S| = |S_A|-1,$ then $r(S) \leq r(S_A) =0$.  Hence $r(S)=0$ and $k(S)=|S|=|S_A|-1 = k(S_A)-1$ as required. Otherwise, if $|S| = |S_A|+1,$ then we claim that $r(S)=1$ and $k(S)=|S|-1=|S_A|=k(S_A)$.

To prove this claim, consider a self-abutting cycle of $S_A.$ This happens only for crossings of $D$ where a white region meets itself. Such a crossing gives rise to a loop $\ga$ in the associated Tait graph $\Ga$. We view $\Ga$ as a graph embedded in $\Si.$ Since $D$ is reduced, it contains no nugatory crossings, hence the loop $\ga$ must be a non-separating curve on $\Si.$ This implies the loop is homologically nontrivial in $\Si,$ namely $[\ga] \neq 0$ as an element in $H_1(\Si).$ Since each cycle in $S_A$ is homologically trivial, the two new  cycles formed by switching the smoothing must both carry the homology class $[\ga]$. In particular, this shows that $r(S)=1$, and it follows that $k(S)=|S|-r(S)=|S_A| = k(S_A).$ This completes the proof that $D$ is $A$-adequate.

The same argument applied to the mirror image of $D$ shows that the diagram $D$ is $B$-adequate. 
\end{proof}

\begin{theorem} \label{thm:span}
Suppose $L$ is a link in $\Si \times I$ admitting a connected reduced alternating diagram $D$ on $\Si$. Then $\spn (\lb D \rb_\Si) = 4n-4g+4$, where $n$ is the number of crossings of $D$ and $g$ is the genus of $\Si$. 
\end{theorem}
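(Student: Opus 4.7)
The plan is to combine Proposition~\ref{prop:adequate}, Corollary~\ref{cor-adequate}, and an Euler characteristic count on the cell decomposition of $\Si$ induced by $D$. By Proposition~\ref{prop:adequate}, the diagram $D$ is adequate, so Corollary~\ref{cor-adequate} gives
$$\spn(\lb D \rb_\Si) = 2n + 2k(S_A) + 2k(S_B).$$
Thus it suffices to show $k(S_A) + k(S_B) = n + 2 - 2g$.

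First I would extract from the proof of Proposition~\ref{prop:adequate} the fact that both $S_A$ and $S_B$ are homologically trivial in $\Si$: the cycles of $S_A$ bound the white regions and those of $S_B$ bound the black regions of the checkerboard coloring induced by the alternating diagram. Consequently $r(S_A) = 0 = r(S_B)$, and so $k(S_A) = |S_A|$ and $k(S_B) = |S_B|$. It then remains to prove the purely combinatorial identity
$$|S_A| + |S_B| = n + 2 - 2g.$$

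To see this, view $D$ as a 4-valent graph cellularly embedded in $\Si$ (cellularity is part of Definition~\ref{defn:reduced}). The cell decomposition has $V = n$ vertices (the crossings), $E = 2n$ edges (each crossing has four incident half-edges), and $F$ faces, where each face is either white or black in the checkerboard coloring. The white faces are in bijection with the cycles of $S_A$ and the black faces with the cycles of $S_B$, so $F = |S_A| + |S_B|$. Applying the Euler formula $V - E + F = 2 - 2g$ yields $|S_A| + |S_B| = n + 2 - 2g$, as required. Substituting gives $\spn(\lb D \rb_\Si) = 2n + 2(n + 2 - 2g) = 4n - 4g + 4$.

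The only subtle step is justifying $F = |S_A| + |S_B|$, which requires that every complementary region of $D$ in $\Si$ be a disk (ensured by cellular embedding) and that each such disk be bounded by exactly one cycle of $S_A$ or $S_B$ (ensured by the alternating/checkerboard-colorable structure, namely that the $A$-smoothing at every crossing cuts off the white regions while the $B$-smoothing cuts off the black regions). Once this bookkeeping is in place, the rest is immediate from the two earlier results.
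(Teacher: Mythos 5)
Your proposal is correct and matches the paper's proof essentially step for step: both invoke Proposition~\ref{prop:adequate} and Corollary~\ref{cor-adequate} to reduce to computing $k(S_A)+k(S_B)$, both use homological triviality of $S_A$ and $S_B$ (established inside the proof of Proposition~\ref{prop:adequate}) to identify $k(S_A)=|S_A|$ and $k(S_B)=|S_B|$, and both conclude with the Euler characteristic count $|S_A|+|S_B| = n+2-2g$ from the cellular decomposition of $\Si$ induced by $D$. The only difference is cosmetic: you spell out the bookkeeping for $F=|S_A|+|S_B|$ more explicitly than the paper does.
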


\begin{proof}
Since $D$ is a reduced alternating diagram, it is checkerboard colorable. Further, we can choose the coloring so that the cycles of $S_A$ are the boundaries of the white disks and the cycles of $S_B$ are the boundaries of the black disks. Thus $|S_A|$ is the number of white disks and $|S_{B}|$ is the number of black disks.  The diagram $D$ gives a handlebody decomposition of $\Si$, and using that to compute  the Euler characteristic, we find that $\chi(\Si) = n -2n + |S_A| + |S_B|.$ It follows that $|S_A| + |S_B| = n +2 -2g.$

By Proposition \ref{prop:adequate}, $D$ is adequate, and
 Corollary \ref{cor-adequate} applies to show that
\begin{eqnarray*}
\spn (\lb D \rb_\Si) &=& 2n+2(k(S_{A})+k(S_{B})), \\
&=& 2n+2(|S_{A}|+|S_{B}|), \\
&=& 4n - 4g+4. \qedhere
\end{eqnarray*}
\end{proof}

\subsection{The dual state lemma}
The next result is the analogue of the dual state lemma for surface link diagrams. Note that for a given state $S$, the dual state, denoted $S^\vee$, is given by performing the opposite smoothing at each crossing. For classical links, the dual state lemma was proved by Kauffman and Murasugi \cites{Kauffman-87, Murasugi-871}. Our proof is based on the one given by Turaev \cite[\S2]{Turaev-1987}.  

\begin{lemma} \label{lemma:dual-state} 
Let $D$ be a connected link diagram on a surface $\Si$ with genus $g$, 
and suppose $D$ has $n$ crossings.  For any state $S$ with dual state $S^\vee$, we have:
\begin{itemize}
\item[(a)] $|S| + |S^\vee| \leq n+2$.
\item[(b)] $k(S) + k(S^\vee) \leq n+2-2g$ provided that $D$ is cellularly embedded. 
\end{itemize}
\end{lemma}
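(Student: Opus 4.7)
The plan is to prove both parts by induction on $n$, following Turaev's strategy adapted to surfaces. The base case $n=0$ is immediate for both: a connected crossingless diagram is a single circle in $\Si$, and one verifies that the claimed inequalities hold (with equality when $g=0$).

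For part (a), pick any crossing $c$, let $D'$ be the diagram obtained from $D$ by replacing $c$ with the $S$-smoothing, and let $S'$ denote the induced state on $D'$. As collections of curves in $\Si$, $S$ and $S'$ coincide, so $|S|=|S'|$. A short case analysis on how the two smoothing arcs reconnect the four loose ends of the 4-valent vertex $c$ shows that $D'$ has either one or two connected components. In the connected case, the inductive hypothesis on $D'$ gives $|S'|+|(S')^\vee|\leq n+1$; since $S^\vee$ and $(S')^\vee$ differ only at $c$, the local saddle cobordism changes the circle count by at most one, so $|S^\vee|\leq|(S')^\vee|+1$, which yields $|S|+|S^\vee|\leq n+2$. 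In the two-component case $D'=D'_1\sqcup D'_2$, the two $S$-arcs at $c$ must belong to different components, hence to different $(S')^\vee$-circles, so the saddle cobordism at $c$ merges these two circles into one: $|S^\vee|=|(S')^\vee|-1$. Together with the inductive bound $|S'|+|(S')^\vee|\leq n+3$, this again gives $|S|+|S^\vee|\leq n+2$.

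For part (b), the central observation is the identity $k(S)=F_S-1$, where $F_S$ is the number of connected components of $\Si\sm S$. This identity follows from the long exact sequence of the pair $(\Si,S)$ with $\ZZ/2$ coefficients, combined with Lefschetz duality $H_2(\Si,S)\cong H^0(\Si\sm S)\cong(\ZZ/2)^{F_S}$: the fundamental class $[\Si]\in H_2(\Si)$ maps to the nonzero diagonal element, so the image of the connecting map $H_2(\Si,S)\to H_1(S)$ has dimension $F_S-1$, which by exactness equals $k(S)$. Since $D$ is cellularly embedded, the Euler characteristic computation gives $F=n+2-2g$ regions, so inequality (b) becomes $F_S+F_{S^\vee}\leq F+2$. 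This combinatorial bound can be established by a parallel induction on $n$, smoothing a crossing and tracking how the number of complementary components changes, with the $2g$ improvement over (a) arising precisely from the cellularity of $\Si\sm D$.

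The main difficulty lies in the inductive step for part (b): when we smooth a chosen crossing $c$ by its $S$-smoothing, the resulting diagram $D'$ may fail to be cellularly embedded in $\Si$. This failure occurs precisely when the $S$-smoothing at $c$ identifies two corners of the same region (a self-abutment), turning a disk region into a non-disk surface. The resolution is either to select a crossing avoiding this situation, or to carry out a direct Euler-characteristic bookkeeping that tracks the defect introduced by each self-abutting smoothing, thereby preserving the overall inequality.
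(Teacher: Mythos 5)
Your argument for part (a) is correct and takes a genuinely different route from the paper. The paper (following Turaev) builds a compact surface $M_S \subset \Si \times I$ that deformation retracts onto $D$, has $\partial M_S = S \cup S^\vee$, and then reads off both inequalities from the long exact sequence of the pair $(M_S,\partial M_S)$ together with the commutative diagram comparing $i_*$ and $j_*$. Your induction on $n$ for (a)—treating the connected and disconnected cases of $D'$ separately—is a legitimate combinatorial alternative, and the case analysis (in particular, that a state circle of $(S')^\vee$ lives in one component of $D'$, so that the resmoothing at $c$ must merge two circles when $D'$ splits) is carried out correctly. Your observation that $k(S) = F_S - 1$, via Lefschetz duality and the long exact sequence of $(\Si,S)$, is also correct and a nice reformulation: it turns part (b) into the purely combinatorial statement $F_S + F_{S^\vee} \leq F + 2$ where $F = n + 2 - 2g$ is the number of complementary disks.

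However, you have not proved part (b). You yourself flag the difficulty: after replacing the crossing $c$ by its $S$-smoothing, the resulting diagram $D'$ need not be cellularly embedded in $\Si$, so the inductive hypothesis for (b) does not apply. Your suggested remedies—``select a crossing avoiding this situation'' or ``direct Euler-characteristic bookkeeping that tracks the defect''—are stated but not carried out, and neither is obviously available. There may be no crossing whose $S$-smoothing keeps $D'$ cellular (for instance, when every crossing is self-abutting for some region of $\Si\sm D$), and the disconnected case is worse still: the components $D'_1, D'_2$ are not separately cellularly embedded in $\Si$, so you cannot apply the inductive hypothesis to them with genus $g$, nor is it clear on which subsurfaces and with which genera the hypothesis should be invoked. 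Without a worked-out mechanism to control how the region count and the genus in the bound change across a non-cellular smoothing, the induction for (b) does not close. The paper's $M_S$-construction sidesteps all of this because it never destroys cellularity: it proves (a) and (b) in a single stroke from the diagram $D$ itself, using that $M_S$ deformation retracts onto $D$, so the cellularity of $D$ in $\Si$ directly gives surjectivity of $j_*\colon H_1(M_S)\to H_1(\Si)$, which is the input needed for the $2g$ improvement.
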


\begin{proof}
Assume the diagram $D$ lies in $\Si \times \{1/2\}$ and that its double points (or crossings) have been labeled $c_1,\ldots, c_n$.
 
Given a state $S$ for $D$, we construct a compact surface $M_S$ with boundary $\partial M_S =  S \cup S^\vee$ embedded in $\Si \times I$. The surface is a union of disks and bands, and it has one disk for each crossing and one band for each edge of $D$. Specifically, if there is an edge of $D$ connecting $c_i$ to $c_j$, then there is a band of $M_S$ connecting the disk at $c_i$ to the one at $c_j$. Note that we are not excluding loops, which occur if $c_i=c_j.$ 

The disks of $M_S$ are assumed to lie in $\Si \times \{1/2\}$ and to be pairwise disjoint. The bands of $M_S$ retract to the corresponding edge of $D$, but they sometimes include a half-twist.  The bands without twists are assumed to lie in $\Si \times \{1/2\}$, and the bands with a half-twist are assumed to lie in a small neighborhood of the associated edge of $D$. (The direction of the half-twist is immaterial.) From this description, it is clear that there is a deformation retract from $M_S$ to the diagram $D$.

\begin{figure}[!ht]  
\centering
\includegraphics[height=25mm]{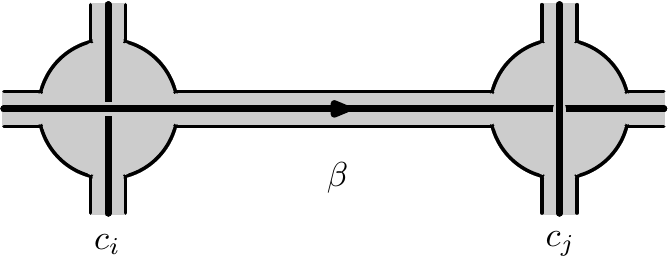}\hspace{.5cm}
\includegraphics[height=25mm]{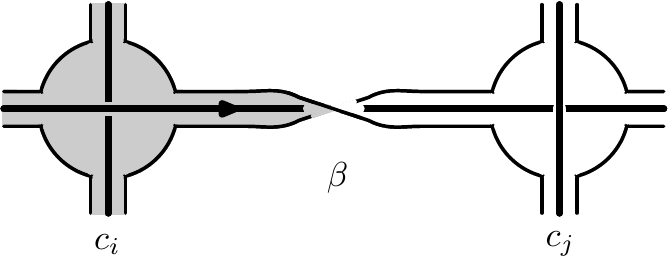}
\caption{The band $\be$ connecting the disk at $c_i$ to the disk at $c_j$. Since the crossings are opposite (over/under), the band will be untwisted if the smoothings are the same ($AA$ or $BB$) and twisted if the smoothings are opposite ($AB$ or $BA$).} \label{fig:band}
\end{figure}

Let $\be$ be a band connecting the disk at $c_i$ to the disk at $c_j$, and we discuss now whether or not $\be$ is flat or twisted as in Figure \ref{fig:band}. It connects one of outgoing arcs of $c_i$ to one of the incoming arcs of $c_j$. There are four possibilities, according to whether the outgoing arc from $c_i$ is an overcrossing or an undercrossing arc, and whether the incoming arc to $c_j$ is an overcrossing or an undercrossing arc. There are also four possibilities according to the smoothings the state $S$ specifies at $c_i$ and $c_j$, which is one of $\{AA, AB, BA, BB\}$. The band $\be$ will be untwisted if the arcs are opposite (over/under or under/over) and the smoothings are the same ($AA$ or $BB$), or if arcs are the same (over/over or under/under) and the smoothings are the opposite ($AB$ or $BA$). Otherwise, the band $\be$ includes a half-twist (see Figure \ref{fig:band}).

This same prescription applies in case $\be$ is a loop. In that case, the smoothings at $c_i=c_j$ are necessarily the same, thus the band $\be$ will be flat if the outgoing arc at $c_i$ connects to the other incoming arc at $90^\circ$, and it will be half-twisted if it connects to the opposite incoming arc (see Figure \ref{fig:band-loop}).

\begin{figure}[!ht]  
\centering
\includegraphics[height=40mm]{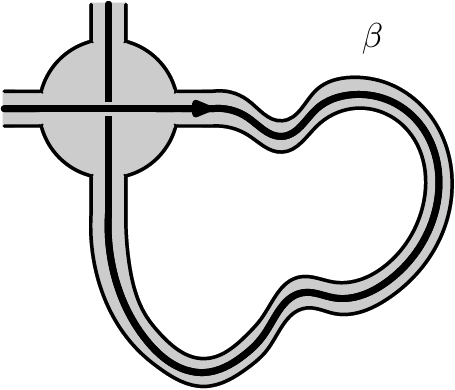}\hspace{1.5cm}
\includegraphics[height=40mm]{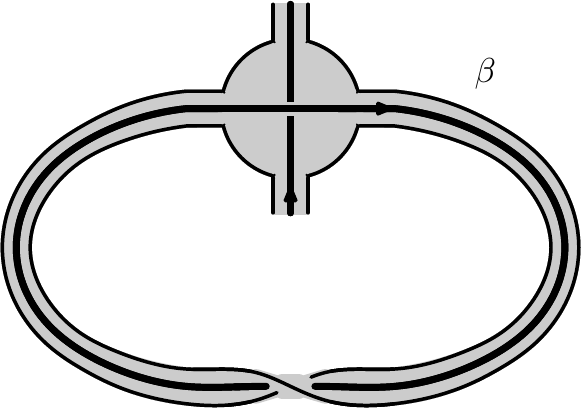}
\caption{A flat band and a half-twisted band.} \label{fig:band-loop}
\end{figure}

When the surface $M_S$ is defined this way, it follows that $\partial M_S = S \cup S^\vee$. Consider the commutative diagram:
\begin{equation}\label{eqn:diagram}
\xymatrix{
  H_2(M_S,\partial M_S) \ar[r]   & H_1(\partial M_S)  \ar[r] \ar[d]^{i_*} & H_1(M_S) \ar[r] \ar[d]^{j_*} & \cdots \\
  & H_1(\Si) \ar[r]^{=} & H_1(\Si)   
}
\end{equation}
In the above, all homology groups are taken with $\ZZ/2$ coefficients, and the top row is the long exact sequence in homology of the pair $(M_S,\partial M_S)$, and the two vertical maps are induced by $M_S \stackrel{j}{\hookrightarrow} \Si \times I \stackrel{p}{\to} \Si$ and $\partial M_S \stackrel{i}{\hookrightarrow} \Si \times I \stackrel{p}{\to} \Si$.

Since $M_S$ is connected and  has $\chi(M_S) = n-2n = -n$, it follows that $b_1(M_S) = b_0(M_S) - \chi(M_S) = n+1.$ Further, $b_2(M_S,\partial M_S)=1$, thus
$$|S| +|S^\vee| = b_1(\partial M_S) \leq  b_1(M_S) +b_2(M_S,\partial M_S) = n+2,$$ 
which proves part (a). 

If $D$ is cellularly embedded, then the map $H_1(D) \to H_1(\Si)$ induced by inclusion is surjective. However, since $M_S$ deformation retracts to $D$, it follows that the map $j_*$ in \eqref{eqn:diagram} is also surjective. Thus $\dim(\ker j_*) = b_1(M_S) - 2g = n+1-2g.$

By commutativity of  \eqref{eqn:diagram}, this implies that
$$k(S) +k(S^\vee) = \dim(\ker i_*) \leq  \dim(\ker j_*) +b_2(M_S,\partial M_S) = n+2 - 2g,$$ 
which proves part (b). 
\end{proof}
 
\section{The Jones-Krushkal polynomial} \label{sec-3}
In this section, we recall the Jones-Krushkal polynomial, which is a two-variable Jones-type polynomial associated to oriented links in thickened surfaces  and defined in terms of the homological Kauffman bracket \cite{krushkal-2011}. We show that this polynomial, or rather its reduction, has a special form when the link $L$ is checkerboard colorable.  We provide many sample calculations, and we prove a result that describes its behavior under horizontal and vertical mirror symmetry.

\subsection{The Jones-Krushkal polynomial}

\begin{definition} \label{defn:unreduced}
For an oriented link $L$ in a thickened surface $\Si \times I$ with link diagram $D$, the (unreduced) Jones-Krushkal polynomial is given by setting $\wt{J}_L(t, z)= \left[ (-A)^{-3w(D)} \lb D \rb_\Si \right]_{A=t^{-1/4}}$. Thus, we have 
$$\wt{J}_L(t, z)= (-1)^{w(D)} t^{3w(D)/4} \sum_{S \in \sS} t^{(b(S)-a(S))/4}(-t^{-1/2}-t^{1/2})^{k(S)} z^{r(S)}.$$ 
\end{definition}

The usual Jones polynomial is defined similarly:
$$V_L(t)= (-1)^{w(D)} t^{3w(D)/4} \sum_{S \in \sS} t^{(b(S)-a(S))/4}(-t^{-1/2}-t^{1/2})^{|S|-1}.$$
Since $|S|=k(S) + r(S),$ it is clear that one can recover the usual Jones polynomial from $\wt{J}_L(t, z)$ by setting $z=-t^{-1/2}-t^{1/2}$ and dividing one $(-t^{-1/2}-t^{1/2})$ factor out.

The factor $t^{3w(D)/4}$ is chosen so that the right hand side of the above equation is invariant under all three Reidemeister moves. Lemma \ref{lemma:RM-invariance1} implies that the polynomial $\wt{J}_L(t,z)$ is invariant under isotopy of links in $\Si \times I$. It is also an invariant of diffeomorphism of the pair $(\Si \times I, \Si \times \{0\}).$  By Kuperberg's theorem, we can obtain an invariant of virtual links by calculating the polynomial on a minimal genus representative.

The next lemma shows that the Jones-Krushkal polynomial is a Laurent polynomial in $t^{1/2}.$  

\begin{lemma} \label{lemma:f-poly}
If $L$ is an oriented link in $\Si \times I$, then 
$\wt{J}_L(t,z) \in \ZZ[t^{1/2}, t^{-1/2}, z].$ 
\end{lemma}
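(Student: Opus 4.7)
The plan is to show term-by-term that each summand in the defining formula for $\wt{J}_L(t,z)$ lies in $\ZZ[t^{1/2},t^{-1/2},z]$, so that the only work is to check that the quarter-powers of $t$ appearing in the prefactor and in the state-sum combine to half-integer powers, via a parity argument on the crossing data.

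First I would fix a diagram $D$ of $L$ on $\Si$ with $n$ crossings and expand, for each state $S \in \sS$, the factor $(-t^{-1/2}-t^{1/2})^{k(S)} = (-1)^{k(S)}(t^{-1/2}+t^{1/2})^{k(S)}$ using the binomial theorem. Since $k(S) \geq 0$ and $r(S) \geq 0$ are nonnegative integers, the typical monomial contributed by state $S$ to $\wt{J}_L(t,z)$ is
\begin{equation*}
(-1)^{w(D)+k(S)} \binom{k(S)}{j}\, t^{\tfrac{3w(D) + b(S) - a(S)}{4} \;+\; \tfrac{k(S) - 2j}{2}}\, z^{r(S)}
\end{equation*}
for some $0 \le j \le k(S)$. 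The second term in the exponent of $t$ is always in $\tfrac{1}{2}\ZZ$, so the whole thing lies in $\ZZ[t^{1/2},t^{-1/2},z]$ provided the first term $\tfrac{3w(D)+b(S)-a(S)}{4}$ also lies in $\tfrac12\ZZ$; equivalently, $3w(D) + b(S) - a(S) \equiv 0 \pmod 2$.

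To verify this congruence I would argue on parities. Every state smooths each of the $n$ crossings exactly once, so $a(S)+b(S)=n$, giving $b(S)-a(S) \equiv n \pmod{2}$. Writing $w(D)=n_{+}(D)-n_{-}(D)$ with $n_{+}(D)+n_{-}(D)=n$, we also have $w(D)\equiv n \pmod{2}$. Hence
\begin{equation*}
3w(D) + b(S) - a(S) \;\equiv\; 3n + n \;=\; 4n \;\equiv\; 0 \pmod{2},
\end{equation*}
so $\tfrac{3w(D)+b(S)-a(S)}{4}\in \tfrac12\ZZ$ as required. Combined with the previous step this shows each monomial in the expansion of $\wt{J}_L(t,z)$ is an integer multiple of $t^{m/2}z^{r}$ for some $m\in\ZZ$ and $r\in\ZZ_{\geq 0}$, and the finite sum over $S\in\sS$ and $j$ therefore lies in $\ZZ[t^{1/2},t^{-1/2},z]$.

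There is no real obstacle here beyond tracking the fractional exponents carefully; the only nontrivial ingredient is the parity identity $w(D)\equiv n\pmod 2$, which holds verbatim as in the classical case because each crossing contributes $\pm 1$ to $w(D)$ and $+1$ to $n$. No properties specific to the surface $\Si$ (such as its genus or the homological invariants $k(S),r(S)$) are used in this lemma beyond the fact that $k(S)$ and $r(S)$ are nonnegative integers, which follows from their definitions as kernel and image dimensions.
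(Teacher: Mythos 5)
Your proof is correct. The parity computation checks out: from $a(S)+b(S)=n$ you get $b(S)-a(S)\equiv n \pmod 2$, from $n_{+}(D)+n_{-}(D)=n$ you get $w(D)\equiv n \pmod 2$, and hence $3w(D)+b(S)-a(S)\equiv 4n\equiv 0 \pmod 2$, which is exactly what is needed for each exponent of $t$ in the state sum to land in $\tfrac12\ZZ$. The binomial expansion of $(-t^{-1/2}-t^{1/2})^{k(S)}$ only shifts the exponent by half-integers, so the conclusion follows.

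The route, however, is different from the paper's. The paper works with the bracket in the $A$-variable and reduces the problem to a single state: it first observes that all terms in a fixed $\lb D\,|\,S\rb_\Si$ have $A$-degree congruent mod~$4$, that any two states have $a(S)-b(S)$ congruent mod~$2$, and therefore it suffices to verify that \emph{one} state's contribution lies in $\ZZ[A^2,A^{-2},z]$. The paper then picks the Seifert state $S_\sigma$, for which $a(S_\sigma)-b(S_\sigma)=w(D)$, so the prefactor $(-A)^{-3w(D)}$ cancels cleanly. Your argument does not appeal to the Seifert state at all; instead you prove the parity $w(D)\equiv n\pmod 2$ directly and conclude uniformly for every state. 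Both rest on the same underlying parity structure (each smoothing switch changes $a-b$ by $2$), but your version is more elementary and self-contained, while the paper's phrasing fits better with later arguments (e.g., the proof of Proposition~\ref{prop:special-form}) that also reduce to the Seifert state.
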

\begin{proof}
Equivalently, we claim that, for any surface link diagram $D$ on $\Si$, the normalized Kauffman bracket $(-A)^{-3w(D)} \lb D \rb_\Si$ lies in $\ZZ[A^2, A^{-2}, z]$.

Note that the claim, once proved, implies the lemma. Note further that Equation \ref{eqn:single-bracket} implies that the terms in $\lb D\,|\, S \rb_\Si$ all have the same $A$-degree modulo 4 for any state $S \in \sS(D)$.  For two states $S_1, S_2 \in \sS(D)$, we have $a(S_1)-b(S_1) \equiv a(S_2)-b(S_2) \mod 2$. Hence Equation \ref{eqn:single-bracket} implies that the terms in $\lb D\,|\, S_1 \rb_\Si$ and in $\lb D\,|\, S_2 \rb_\Si$ have the same $A$-degree modulo 2. Thus, the claim will follow once it has been verified for any one state $S \in \sS(D)$.

We claim that $(-A)^{-3w(D)} \lb D\,|\, S_\si \rb_\Si \in \ZZ[A^2, A^{-2}, z]$, where $S_\si$ is the \emph{Seifert} state. This is the state with all oriented smoothings (see Figure \ref{Fig:oriented}). (For classical links, $S_\si$ coincides with the one produced by Seifert's algorithm.) As in Figure \ref{Fig:oriented}, $S_\si$ has $A$-smoothings at the positive crossings and $B$-smoothings at the negative crossings. Thus $a(S_\si) - b(S_\si) = w(D)$.

\begin{figure}[!ht]
\centering\includegraphics[height=25mm]{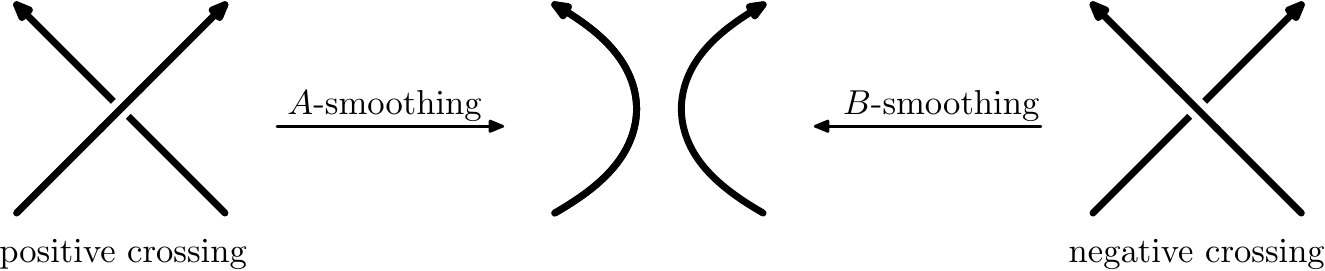}
\caption{Oriented smoothings at positive and negative crossings.}\label{Fig:oriented}
\end{figure}

To complete the proof, we apply Equation \ref{eqn:single-bracket} one more time to see that
\begin{eqnarray*}
(-A)^{-3w(D)} \lb D \,|\, S_\si \rb_\Si &=& (-A)^{-3w(D)}\left(A^{w(D)}(-A^{-2}-A^2)^{k(S_\si)}z^{r(S_\si)} \right),\\
&=& (-1)^{w(D)} A^{-2w(D)}(-A^{-2}-A^2)^{k(S_\si)}z^{r(S_\si)} \in \ZZ[A^2,A^{-2}, z]. \qedhere
\end{eqnarray*}
\end{proof}

For a link $L \subset \Si \times I$, where $\Si$ has genus $g$, it follows that $0 \leq r(S) \leq g$ for all states $S \in \sS.$ Thus, we can write $\wt{J}_L(t, z)= \sum_{i=0}^g \wt{\Phi}_i(t) z^i$, where $\wt{\Phi}_i(t) \in \ZZ[t^{-1/2}, t^{1/2}]$ for $i=0,\ldots, g$ by Lemma \ref{lemma:f-poly}.

\begin{proposition} \label{prop:non-check}
If $L$ is a link in $\Si \times I$ which is not checkerboard colorable, then $\wt{\Phi}_0(t) =0$. Thus $\wt{J}_L(t,z) = z J'(t,z)$,  where $J'(t,z) =\sum_{i=1}^g \wt{\Phi}_i(t) z^{i-1}.$ 

If, in addition, $L$ is a link in a thickened torus, then $\wt{J}(t,z)=zV_L(t)$ and so is completely determined by the usual Jones polynomial.
\end{proposition}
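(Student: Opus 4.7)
The plan is to reduce both claims to a single topological fact: if $L$ is not checkerboard colorable, then every state $S$ of any diagram $D$ for $L$ on $\Si$ satisfies $r(S) \geq 1$. The main preliminary step is to show that for any state $S$, the $\ZZ/2$-homology class of the union of cycles of $S$ equals $[L] \in H_1(\Si \times I; \ZZ/2) \cong H_1(\Si; \ZZ/2)$. This holds because an $A$- or $B$-smoothing at a crossing is a purely local modification in a small disk: both the 4-valent crossing and each of its smoothings give 1-chains with the same boundary on the boundary of the disk, so their difference is a cycle supported in a contractible disk and vanishes in $H_1(\Si;\ZZ/2)$. Summing over all crossings, the chains $S$ and $p(L)$ represent the same class in $H_1(\Si;\ZZ/2)$, and under the identification $H_1(\Si\times I)\cong H_1(\Si)$ this class is $[L]$.

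If $L$ is not checkerboard colorable, then Proposition \ref{prop:equiv} gives $[L]\neq 0$, so $[S]\neq 0$ for every state $S$. But $[S]$ is precisely the image under $i_*\colon H_1(S;\ZZ/2)\to H_1(\Si;\ZZ/2)$ of the fundamental class of $S$ (the sum of all its component circles), so $i_*$ has nonzero image and $r(S)\geq 1$. Now, in the state-sum expression
$$\wt{J}_L(t,z) = (-1)^{w(D)} t^{3w(D)/4}\sum_{S\in\sS} t^{(b(S)-a(S))/4}(-t^{-1/2}-t^{1/2})^{k(S)} z^{r(S)},$$
collecting by powers of $z$ shows that the coefficient $\wt{\Phi}_0(t)$ is obtained by summing only over states with $r(S)=0$; since the set of such states is empty, $\wt{\Phi}_0(t)=0$, and hence $\wt{J}_L(t,z)=zJ'(t,z)$ with $J'(t,z)=\sum_{i=1}^g\wt{\Phi}_i(t)z^{i-1}$.

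For the torus case, $g=1$ forces $r(S)\in\{0,1\}$, and combined with $r(S)\geq 1$ we conclude $r(S)=1$ for every state. Hence $\wt{J}_L(t,z)=z\,\wt{\Phi}_1(t)$. Specializing $z=-t^{-1/2}-t^{1/2}$ and dividing out one factor of $(-t^{-1/2}-t^{1/2})$ recovers the usual Jones polynomial, so $V_L(t)=\wt{\Phi}_1(t)$ and therefore $\wt{J}_L(t,z)=z\,V_L(t)$. The only conceptually nontrivial step in the argument is the invariance of the $\ZZ/2$-homology class under smoothing; the rest is routine state-sum bookkeeping combined with the bound $r(S)\leq g$.
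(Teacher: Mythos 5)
Your argument is correct and follows the same route as the paper: use Proposition~\ref{prop:equiv} to see that $[L]\neq 0$ in $H_1(\Si;\ZZ/2)$, note that every state $S$ is homologous to $L$ (you fill in the local smoothing argument that the paper compresses into one clause), conclude $r(S)\ge 1$ for all states so $\wt\Phi_0=0$, and for the torus invoke $r(S)\le g=1$ together with the specialization $z=-t^{-1/2}-t^{1/2}$ to identify $\wt\Phi_1$ with $V_L$.
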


\begin{proof}
If $L$ is not checkerboard colorable, then $[L]$ is nontrivial as an element in $H_1(\Si)$. The same is true for any state $S$, since $S$ is homologous to $L$.  Thus  $r(S) \geq 1$ for all states, which implies that $\Phi_0(t) = 0$. If $L$ is a link in the thickened torus, then it follows that $r(S)=1$ for all states, thus $\wt{J}(t,z)=zV_L(t)$ as claimed.
\end{proof}

\begin{example} \label{example-2-1-b}
Let $K$ be the virtual trefoil (see Figures \ref{fig:examples} and \ref{fig:2-1-in-torus}). In Example \ref{example-2-1-a}, we showed that its diagram has homological Kauffman bracket $\lb D \rb_T = A^2(-A^2-A^{-2})z + 2 z +A^{-2}z$. Since this diagram has writhe $w(D)=-2$, it follows that
\begin{equation} \label{eqn:virtual-trefoil}
\wt{J}_{K}(t,z) = z\left(- t^{-5/2}+t^{-3/2}+  t^{-1} \right).
\end{equation}
Since $K$ is not checkerboard colorable and has virtual genus one, Equation \eqref{eqn:virtual-trefoil} can also be deduced from  Proposition \ref{prop:non-check} and the fact that $V_K(t)=- t^{-5/2}+t^{-3/2}+  t^{-1}$.
\end{example}

\begin{figure}[!ht]  
\centering
\includegraphics[height=26mm]{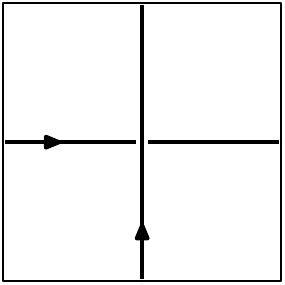}\hspace{.8cm}
\includegraphics[height=26mm]{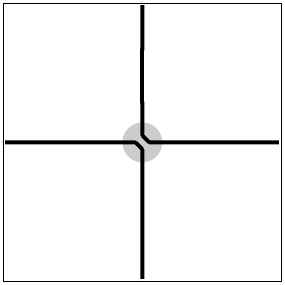}\hspace{.8cm}
\includegraphics[height=26mm]{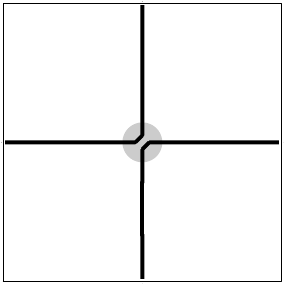} 
\caption{A minimal genus diagram of the virtual Hopf link in the torus, and the states $S_A$ and $S_B$.} \label{fig:Hopf-in-torus}
\end{figure}

\begin{example} \label{example-virtual-Hopf}
The virtual Hopf link $L$ (see Figure \ref{fig:examples}) admits a minimal genus diagram $D$ on the torus $T$ which has one crossing. The diagram $D$, along with the states $S_A$ and $S_B$, are depicted in Figure \ref{fig:2-1-in-torus}. It has $|S_A| = 1=|S_B|$ and $r(S_A) = 1 = r(S_B),$ hence $\lb D \rb_T = Az  +A^{-1}z$. Thus, the Jones-Krushkal polynomial of the virtual Hopf  link is $\wt{J}_{L}(t,z) = z\left(-t^{-1}-t^{-1/2}\right)$. Since the virtual Hopf link is not checkerboard colorable and has virtual genus one, this also follows from Proposition \ref{prop:non-check} and the fact that $V_{L}(t) = -t^{-1}-t^{-1/2}$.
\end{example}

\subsection{The reduced Jones-Krushkal polynomial}
In this section, we introduce a reduction of Jones-Krushkal polynomial for checkerboard colorable links in thickened surfaces.  

Suppose $L$ is a link in $\Si \times I$ represented by a checkerboard colorable link diagram $D$ on $\Si$. Since $L$ is homologically trivial as an element in $H_1(\Si \times I; \ZZ/2),$ it follows that $k(S)\geq 1$ for each state $S$. 

\begin{definition}
Let $L$ be an oriented, checkerboard colorable link in $\Si \times I$ and $D$ a diagram on $\Si$ representing $L$. The reduced Jones-Krushkal polynomial is defined by setting
$$J_L(t, z)= (-1)^{w(D)} t^{3w(D)/4} \sum_{S \in \sS} t^{(b(S)-a(S))/4}(-t^{-1/2}-t^{1/2})^{k(S)-1} z^{r(S)}.$$ 
As before, we can write $J_L(t, z)= \sum_{i=0}^g \Phi_i(t) z^{i}.$
\end{definition}

\begin{remark}\label{jones}
The reduced Jones-Krushkal polynomial $J_L(t,z)$ specializes to the usual Jones polynomial $V_L(t)$ under setting $z=-t^{-1/2} -t^{1/2}$. 

In particular, if $L$ is a classical link, then any classical link diagram for $L$ will have $r(S)=0$ for all states. Thus $J_L(t, z)=V_L(t)$ when $L$ is classical.
\end{remark}

For classical links, Jones proved that $V_L(t) \in t^{(m-1)/2} \ZZ[t,t^{-1}],$ where $m$ is the number of components in $L$ \cite[Theorem 2]{Jones-85}. This result was extended to checkerboard colorable virtual links by Kamada, Nakabo, and Satoh \cite[Proposition 8]{KNS-2002}. The next result gives the analogous statement for the reduced Jones-Krushkal polynomial.

\begin{proposition} \label{prop:special-form}
Let $L$ be a checkerboard colorable link in $\Si \times I$ with $m$ components, and let $ J_L(t, z)= \sum_{i=0}^{g} \Phi_{i}(t) z^{i}$. Then $\Phi_{i}(t) \in t^{(m+i+1)/2} \, \ZZ[t,t^{-1}]$. 
\end{proposition}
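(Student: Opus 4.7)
The plan is to verify the claim by a direct state-sum analysis. Pick any checkerboard colorable diagram $D$ of $L$; the coefficient $\Phi_i(t)$ of $z^i$ in $J_L(t,z)$ is the sum over states $S$ with $r(S)=i$ of
\[
(-1)^{w(D)}\,t^{3w(D)/4}\,t^{(b(S)-a(S))/4}\,\bigl(-t^{-1/2}-t^{1/2}\bigr)^{k(S)-1}.
\]
The binomial expansion of $\bigl(-t^{-1/2}-t^{1/2}\bigr)^{k(S)-1}$ produces only monomials whose doubled $t$-exponent has the same parity as $k(S)-1$. Using $i=r(S)$ and $r(S)+k(S)=|S|$, the conclusion $\Phi_i(t)\in t^{(m+i+1)/2}\ZZ[t,t^{-1}]$ reduces to the combinatorial congruence
\[
\frac{3w(D)+b(S)-a(S)}{2} \;\equiv\; m+|S| \pmod{2} \qquad(\star)
\]
holding for every state $S$ of $D$.

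I would next verify $(\star)$ for the Seifert state $S_\sigma$ (all oriented smoothings, as in the proof of Lemma~\ref{lemma:f-poly}). Since $a(S_\sigma)-b(S_\sigma)=w(D)$, the left-hand side of $(\star)$ equals $w(D)\equiv n\pmod{2}$, where $n$ denotes the crossing number of $D$. The Seifert algorithm produces an orientable spanning surface $F_\sigma\subset\Si\times I$ built from $|S_\sigma|$ 0-handles and $n$ untwisted 1-handles, so $\chi(F_\sigma)=|S_\sigma|-n$. Being orientable with $m$ boundary circles, $F_\sigma$ satisfies $\chi(F_\sigma)\equiv m\pmod{2}$, and hence $|S_\sigma|+m\equiv n\equiv w(D)\pmod{2}$, establishing $(\star)$ for $S_\sigma$.

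Propagation across the cube of resolutions completes the proof. If $S'$ is obtained from $S$ by flipping a single $A$-smoothing to a $B$-smoothing, then $b(S')-a(S')=b(S)-a(S)+2$, so the left-hand side of $(\star)$ changes by $1\pmod{2}$. Because $D$ is checkerboard colorable, the cube of resolutions contains no single-cycle smoothings (as noted in the paragraph following Example~\ref{example-2-1-a}), so $|S'|=|S|\pm 1$ and the right-hand side also changes by $1\pmod{2}$. Since every state is reached from $S_\sigma$ by finitely many such switches, $(\star)$ holds for all states $S$, and the proposition follows.

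The main obstacle is the parity bookkeeping: the substitution $A=t^{-1/4}$ quarters all $A$-exponents, and one must carefully disentangle the integer shifts introduced by the binomial expansion of $\bigl(-t^{-1/2}-t^{1/2}\bigr)^{k(S)-1}$ from the fractional residue class that governs the claim. The essential geometric input is the absence of single-cycle smoothings in checkerboard colorable diagrams, which is precisely what drives parity propagation across every edge of the state cube.
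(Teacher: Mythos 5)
Your argument is correct and follows essentially the same path as the paper's proof: reduce to a parity congruence on the $t$-exponents, verify it at the Seifert state $S_\sigma$ using $a(S_\sigma)-b(S_\sigma)=w(D)$ together with the parity relation $m\equiv |S_\sigma|+n\pmod 2$, and then propagate the congruence across the state cube using the fact that checkerboard colorability excludes single-cycle smoothings, so $|S'|=|S|\pm 1$ for adjacent states. The one variation is how you justify $m\equiv |S_\sigma|+n\pmod 2$: the paper proves it by direct induction on the number of crossings (each oriented smoothing changes the cycle count by exactly one), while you appeal to the Euler characteristic of a Seifert surface; the inductive argument is somewhat cleaner here because it sidesteps the question of whether the abstract Seifert surface is orientable and correctly realized in $\Si\times I$, a point one would otherwise want to address explicitly in the surface-link setting, though the two arguments encode the same parity bookkeeping.
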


\begin{proof}
Let $D$ be a checkerboard colorable diagram for $L$. Lemma \ref{lemma:f-poly} implies that $\Phi_{i}(t) \in \ZZ[t^{1/2},t^{-1/2}]$. For any state $S \in \sS(D)$, define $\varphi_S(t) \in \ZZ[t^{1/2},t^{-1/2}]$ by setting
\begin{equation} \label{eqn:onetwo}
\varphi_S(t) z^{r(S)} = (-1)^{-3w(D)}t^{3 w(D)/4}t^{(b(S)-a(S))/4}(-t^{-1/2}-t^{1/2})^{k(S)-1}z^{r(S)}.
\end{equation}

If $S' \in \sS(D)$ is any other state, then we claim that 
\begin{equation} \label{eqn:state}
b(S)-a(S)+2|S| \equiv b(S')-a(S')+2|S'| \; \text{(mod 4)}.
\end{equation}
Since any state can be obtained from any other by switching the smoothings at finitely many crossings, it is sufficient to prove \eqref{eqn:state} when $S'$ is obtained from $S$ by switching just one smoothing. In that case, we have $a(S)-b(S) = a(S')-b(S') \pm 2$ and $|S| = |S'| \pm 1$ by checkerboard colorability, and \eqref{eqn:state} follows.

From \eqref{eqn:onetwo}, it is clear that $\varphi_{S}(t)\in t^{(3w(D)+b(S)-a(S) +2k(S)-2)/4} \, \ZZ[t,t^{-1}]$.

Let $S_\si$ be the Seifert state with all oriented smoothings (see Figure \ref{Fig:oriented}). Recall that $b(S_{\si}) - a(S_{\si})  = -w(D)$. 

We claim that 
\begin{equation} \label{eqn:parity} 
m \equiv |S_{\si}|+n \; \text{(mod 2)}.
 \end{equation} 
Each time we perform an oriented smoothing, the number of components changes by one. Thus the  claim now follows easily by induction on $n$.

For any state $S$, Equation \eqref{eqn:state} implies that 
\begin{equation} \label{eqn:congruence}
b(S)-a(S)+2|S| = b(S_{\si})-a(S_{\si})+2|S_{\si}|+4\ell
\end{equation} for $\ell \in \ZZ$. 
By Equation \eqref{eqn:congruence}  and the fact that $|S| = k(S) +r(S)$, we get that
\begin{eqnarray*}
(3w(D)+b(S)-a(S)+2k(S)-2)/2 &=& (3w(D)+b(S)-a(S)+2|S|-2r(S)-2)/2,\\ 
&=&(2w(D)+2|S_{\si}|+4\ell -2r(S)-2)/2, \\
&=&w(D)+|S_{\si}|+2\ell -r(S)-1,\\
& \equiv& w(D)+m-n-r(S)-1\; \text{(mod 2)},\\
&\equiv& m+ r(S)+1\; \text{(mod 2)}.
\end{eqnarray*}
(The last two steps use Equation \eqref{eqn:parity} and the fact that $n \equiv w(D) \text{(mod 2)}.$) For each state $S$ with $r(S)=i$, we have shown that $\varphi_{S}(t)\in t^{(m+i+1)/2}\ZZ[t,t^{-1}]$.  Since we can write $\Phi_{i}(t) = \sum_{r(S)=i} \varphi_S(t) z^i,$ the proposition now follows.  
\end{proof}

\begin{figure}[!ht]  
\centering
\includegraphics[height=28mm]{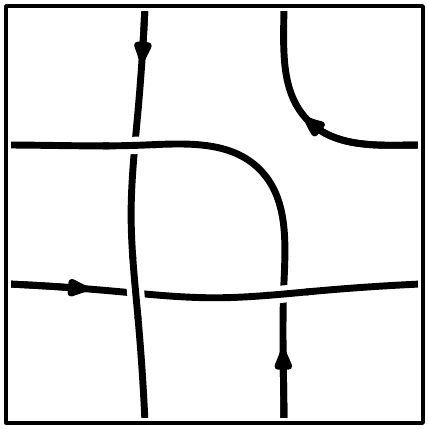}\hspace{.8cm}
\includegraphics[height=28mm]{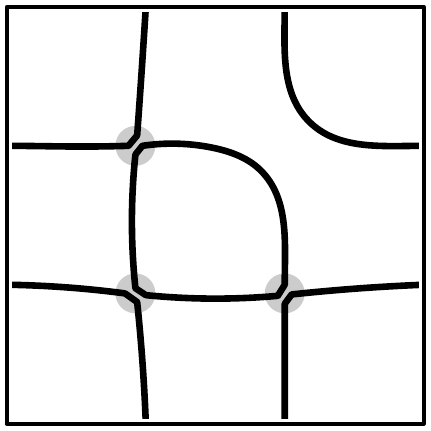}\hspace{.8cm}
\includegraphics[height=28mm]{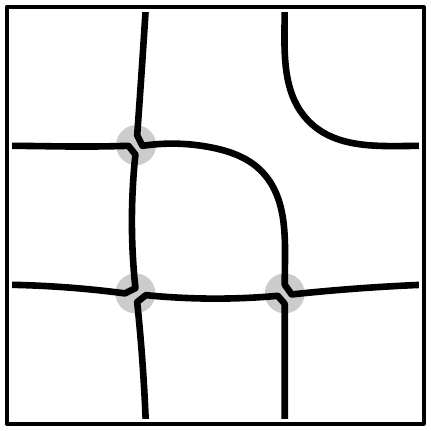} 
\caption{A minimal genus diagram of the virtual Borromean rings in the torus, and the states $S_A$ and $S_B$.} \label{fig:Borromean-in-torus}
\end{figure}

\begin{example}\label{bor}
The virtual Borromean rings $L$ (see Figure \ref{fig:examples}) admits a minimal genus diagram on the torus $T$, which is shown in Figure \ref{fig:Borromean-in-torus} along with $S_A$ and $S_B$. Notice that the diagram is checkerboard colorable, and it has $|S_A| =2, r(S_A) = 0,$ and  $|S_B| =1, r(S_B) = 0.$ By direct computation, the three states $AAB, ABA, BAA$ all have $|S| = 1$ and $r(S)=0,$ and the three states
$ABB, BAB, BBA$ all have $|S| = 2$ and $r(S)=1.$ Therefore, $\lb D \rb_T = A^3 d^2 + 3A d + 3A^{-1} d z + A^{-3}d$, where $d=-A^2-A^{-2}.$ Since $D$ has writhe $w=3$, it follows that
$$J_{L}(t,z) = t - 2 t^2 -t^{3}-3t^{5/2}z.$$
\end{example}

\subsection{Calculations}
In this section, we provide some sample calculations of the homological Kauffman bracket and Jones-Krushkal polynomials. 

\begin{figure}[!ht]
\includegraphics[scale = 0.75]{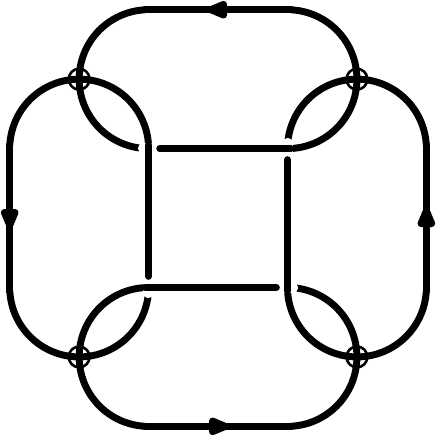}  \qquad
\centering\includegraphics[height=28mm]{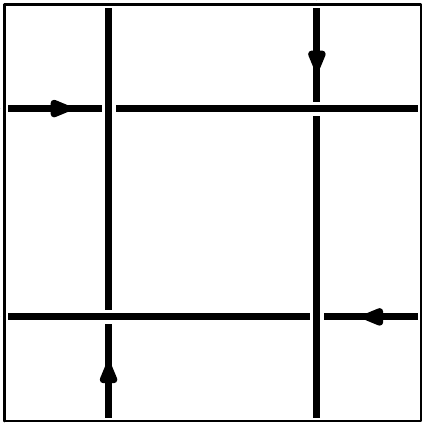}  \quad 
 \includegraphics[height=28mm]{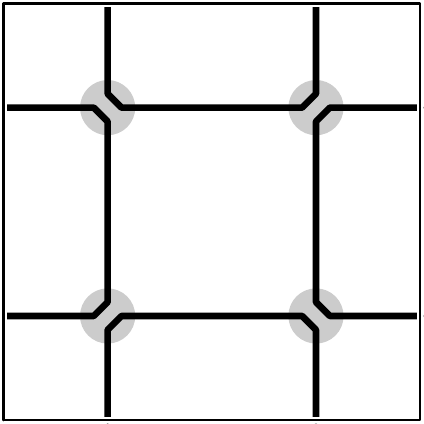}\quad
\includegraphics[height=28mm]{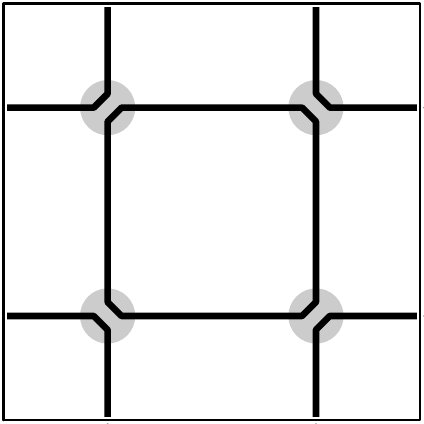} 

\caption{From left to right, a virtual link with four components, a minimal genus representative in the torus, and the states $S_A$ and $S_B$.} \label{fig:four-component-link}
\end{figure}
 
\begin{example} \label{example-four-component-link}
A virtual link $L$ with four components along with a minimal genus diagram on the torus $T$ appear on the left of Figure \ref{fig:four-component-link}. The states $S_A$ and $S_B$ are shown to the right with shading around the smoothed crossings. From this, we see that $|S_A| = 2 = |S_B|$ and $r(S_A) = 0 = r(S_B).$ Resmoothing one of crossings of $S_A$, one can show that the four states $AAAB, AABA, ABAA,$ and $BAAA$ all have $|S|=1$ and $r(S)=0$. Likewise, resmoothing one of crossings of $S_B$, one can similarly show that the four states $ABBB, BABB, BBAB,$ and $BBBA$ all have $|S|=1$ and $r(S)=0$. Resmoothing two of the crossings of $S_A$ (or doing the same to $S_B$), one can show that the six states $AABB, ABBA, BBAA, BAAB, ABAB, BABA$ all have $|S| = 2$ and $r(S) =1.$ Thus,
$$\lb L \rb_T = A^4d^2 + 4A^2d  +6dz+4A^{-2}d +A^{-4}d^2,$$
where $d=-A^2 -A^{-2}.$
 
Since this link has writhe $w(L)=-4$, it follows that
\begin{eqnarray*}
J_{L}(t,z)&=&t^{-3}\left(t^{-1}(-t^{-1/2}-t^{1/2})+4t^{-1/2}+6z+4t^{1/2}
+t(-t^{-1/2}-t^{1/2})\right),\\
&=&\left(-t^{-9/2}+3t^{-7/2}+3t^{-5/2}-t^{-3/2}\right)+6t^{-3}z.
\end{eqnarray*}
The diagram is alternating, therefore it is adequate.
\end{example}

\begin{figure}[!ht]
\centering\includegraphics[scale = 1.25]{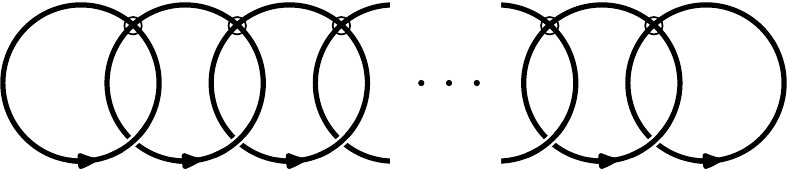} 
\caption{The virtual chain link.} \label{fig:newton}
\end{figure}

\begin{example}\label{newton} 
Figure \ref{fig:newton} shows  virtual chain link with $m-1$ crossings and $m$ components. It is not checkerboard colorable, thus it follows that $r(S) \geq 1$ for all states. One can further show that its virtual genus is $\lfloor \tfrac{m}{2}\rfloor$.

Figure \ref{fig:newton} shows the states $S_A$ and $S_B$ for the virtual chain link. Notice that $|S_A| = 1 = |S_B|$. One can further show that every state $S$ has $|S|=1$. Since $L$ is not checkerboard colorable, it follows that every state has $r(S)=1.$ As a result we have
$$ \lb D \rb_\Si=A^{m-1}z + \binom{m-1}{1}A^{m-3} z +\cdots+\binom{m-1}{m-2}A^{3-m}z+A^{m-1}z=(A+A^{-1})^{m-1}z. $$
Since this link has writhe $w=1-m$, we conclude that
\begin{eqnarray*}
J_{D}(t,z) &=& (-1)^{1-m}t^{(3-3m)/4}\left(t^{-1/4}+t^{1/4}\right)^{m-1}z, \\
&=&(-1)^{m-1}\left(t^{-1}+t^{-1/2}\right)^{m-1}z.
\end{eqnarray*}
\end{example} 

\begin{figure}[!ht]  
\centering
\includegraphics[scale = 0.90]{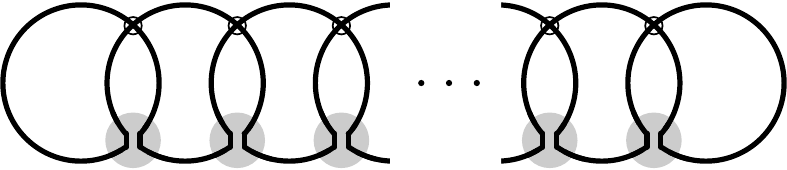}\qquad
\includegraphics[scale = 0.90]{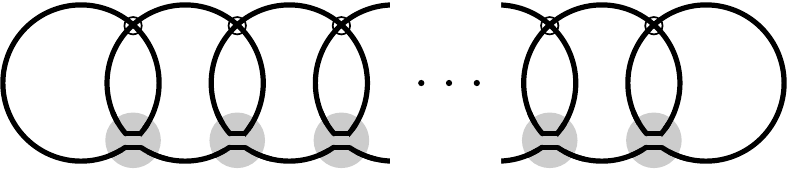} 
\caption{The states $S_A$ and $S_B$ for the virtual chain link.} \label{resolution-newton}
\end{figure}

\subsection{Horizontal and vertical mirror images}
In this section, we describe how the Jones-Krushkal polynomial changes under taking mirror images. Recall that there are two ways to take the mirror image of a virtual link $L$. They are called the vertical and horizontal mirror images, and they are defined in terms of virtual link diagrams as follows.

Given a virtual link diagram $D$, the vertical mirror image is denoted $D^{*}$ and it is the diagram obtained by switching the over and under crossing arcs at each classical crossing, see Figure \ref{fig:3-1}. The horizontal mirror image is denoted $D^{\dag}$ and it is the diagram obtained by reflecting the diagram $D$ across a vertical line $x=x_0$ in $\RR^2$ to the far left of $D$, see Figure \ref{fig:3-1}.  

\begin{figure}[!ht]  
\centering
\includegraphics[height=35mm]{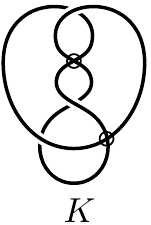}\hspace{.8cm}
\includegraphics[height=35mm]{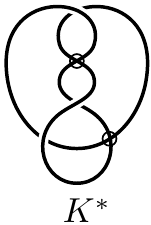}
\hspace{.8cm}
\includegraphics[height=35mm]{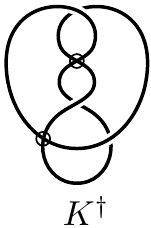}
\caption{The virtual knot $K=3.1$ and its mirror images.} \label{fig:3-1}
\end{figure}

We describe these operations for links in thickened surfaces. Let $L$ be a link in $\Si \times I$, and let $D$ be its diagram on $\Si$. Let $\phi \colon \Si \times I\to \Si \times I$ be the orientation-reversing map given by $\phi(x,t)=(x,1-t)$ for $(x,t) \in \Si \times I$. Then  $\phi(L)=L^{*}$, the vertical mirror image of $L$. Now let $\psi:\Si \to \Si$ be an orientation-reversing homeomorphism. (For example, $\psi$ could be reflection through a plane when $\Si$ is embedded in $\RR^3$.) Then under  $\psi\times\text{id}:\Si\times I \to \Si\times I$, we have  $(\psi\times\text{id})(L)=L^{\dag}$, the horizontal mirror image of $L$.     

\begin{proposition} \label{star}
If $L$ is a link in $\Si \times I$, then 
$$ \wt{J}_{L^*}(t,z)=\wt{J}_{L}(t^{-1},z), \quad \text{ and } \quad \wt{J}_{L^\dag}(t,z)=\wt{J}_{L}(t^{-1},z).$$
If $L$ is a checkerboard colored link in $\Si \times I$, then 
$$J_{L^*}(t,z)= J_{L}(t^{-1},z), \quad \text{ and } \quad J_{L^\dag}(t,z)= J_{L}(t^{-1},z).$$
\end{proposition}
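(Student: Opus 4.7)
The plan is to prove Proposition \ref{star} by directly comparing state sums, after establishing a canonical bijection between states of $D$ and states of its mirror diagram that swaps $a$ and $b$ while preserving $k$ and $r$.

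For the vertical mirror $L^*$, I begin with the observation that the diagram $D^*$ is obtained from $D$ by switching over- and under-strands at each classical crossing. At a single crossing, this operation exchanges the $A$- and $B$-smoothings. Consequently, there is a canonical bijection $\sS(D) \to \sS(D^*)$, $S \mapsto S^*$, such that $a(S^*) = b(S)$ and $b(S^*) = a(S)$. The resulting collections of simple closed curves on $\Si$ are \emph{identical} (only the labels flip), so $|S^*| = |S|$, $k(S^*) = k(S)$, and $r(S^*) = r(S)$. Moreover, switching every crossing negates every local writhe, so $w(D^*) = -w(D)$. Substituting into the state-sum from Definition \ref{defn:unreduced} gives
\begin{align*}
\wt{J}_{L^*}(t,z)
&= (-1)^{-w(D)} t^{-3w(D)/4} \sum_{S \in \sS(D)} t^{(a(S)-b(S))/4}\bigl(-t^{-1/2}-t^{1/2}\bigr)^{k(S)} z^{r(S)},
\end{align*}
which is precisely $\wt{J}_L(t^{-1},z)$ after noting that $(-1)^{-w(D)} = (-1)^{w(D)}$ and that $-t^{-1/2}-t^{1/2}$ is invariant under $t \mapsto t^{-1}$.

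For the horizontal mirror $L^\dag$, I use the definition via $\psi \times \id \co \Si \times I \to \Si \times I$ with $\psi$ an orientation-reversing homeomorphism of $\Si$. Under this map, each positive crossing is carried to a negative crossing and vice versa, so $w(D^\dag) = -w(D)$. The key point is that since the $A$-smoothing at a crossing is defined using the orientation of $\Si$ (rotating the overstrand $90^\circ$ counterclockwise into the understrand), reversing the orientation of $\Si$ exchanges the $A$- and $B$-smoothings at every crossing. Thus the induced bijection $\sS(D) \to \sS(D^\dag)$, $S \mapsto S^\dag$, again satisfies $a(S^\dag) = b(S)$ and $b(S^\dag) = a(S)$, while $|S^\dag| = |S|$. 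Since $\psi \times \id$ is a homeomorphism of pairs, it induces an isomorphism $H_1(\Si) \to H_1(\Si)$ under which $i_*(H_1(S^\dag))$ corresponds to $i_*(H_1(S))$ (homological rank being invariant under homeomorphism, regardless of orientation); therefore $k(S^\dag) = k(S)$ and $r(S^\dag) = r(S)$. The state sum for $\wt{J}_{L^\dag}(t,z)$ thus coincides term-by-term with the one computed above for $\wt{J}_{L^*}(t,z)$, giving the second identity.

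The reduced-polynomial versions follow from exactly the same calculation: replacing $k(S)$ by $k(S)-1$ in the exponent of $(-t^{-1/2}-t^{1/2})$ does not disturb the bijection or the degree bookkeeping. The only non-routine step in the whole argument is the claim that the horizontal mirror interchanges $A$- and $B$-smoothings. I would verify this locally in an oriented chart near a single crossing, where reflection is given by $(x,y) \mapsto (-x,y)$, and check that the resulting pair of arcs connecting the four strand-ends is precisely the opposite smoothing; the rest is mechanical substitution into Definition \ref{defn:unreduced}.
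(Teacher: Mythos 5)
Your proposal is correct and follows essentially the same approach as the paper: for the vertical mirror you set up the bijection $S \leftrightarrow S^*$ swapping $a$ and $b$ while preserving $|S|$, $k(S)$, $r(S)$, note $w(D^*)=-w(D)$, and substitute into the state sum. The paper leaves the horizontal mirror "similar and left to the reader," whereas you actually supply the needed observation — that an orientation-reversing $\psi$ swaps the $A$/$B$ labels because the smoothing convention depends on the orientation of $\Sigma$, while $\psi_*$ being an isomorphism on $H_1(\Sigma;\ZZ/2)$ preserves $k$ and $r$ — which is a worthwhile addition.
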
  

\begin{proof}
We give the proof for the vertical mirror image; the proof for horizontal mirror image is similar and left to the reader. Let $D$ be a diagram on $\Si$ for $L$. Then $D^*$ is obtained by switching the over and under arcs at each crossing of $D$. Notice that an $A$-smoothing applied to a crossing of $D$ has the same effect as a $B$-smoothing applied to the crossing of $D^*$. Thus, there is a one-to-one correspondence $S \leftrightarrow S^*$ between the state spaces $\sS(D)$ and $\sS(D^{*})$, where $S \in \sS(D)$ and $S^* \in \sS(D^{*})$ have opposite smoothings at the corresponding crossings of $D$ and $D^*$.

Clearly,  $w(D^{*})=-w(D),\ a(S^*)=b(S),$ and $b(S^*)=a(S).$ Further,  we have $|S^*| = |S|, \ k(S^*) = k(S),$ and $r(S^*) = r(S).$ Set $d = -t^{-1/2}-t^{1/2}$. Thus

\begin{eqnarray*} 
\wt{J}_{L^*}(t,z)&=&(-1)^{w(D^*)}t^{3w(D^*)/4} \sum_{S* \in \sS(D^*)} t^{(b(S^*)-a(S^*))/4 }d^{k(S^*)}z^{r(S^*)}, \\
&=&(-1)^{w(D)}t^{-3w(D)/4} \sum_{S \in \sS(D)} t^{(a(S)-b(S))/4} d^{k(S)}z^{r(S)}
 \;\; = \;\; \wt{J}_{L}(t^{-1},z).  
\end{eqnarray*}
The proof for reduced Jones-Krushkal polynomial is similar and is left to the reader.  
\end{proof}

\begin{figure}[!ht]  
\centering
\includegraphics[height=36mm]{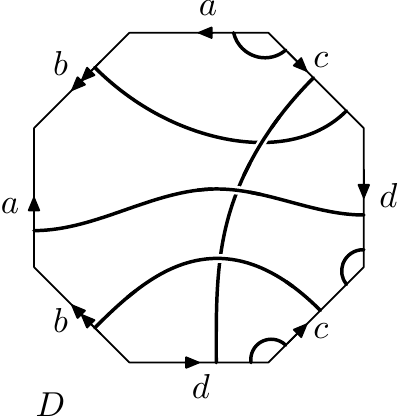}\hspace{.8cm}
\includegraphics[height=36mm]{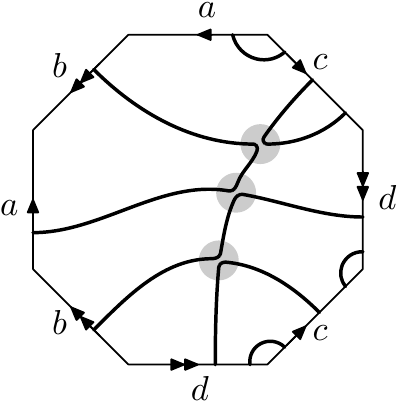}\hspace{0.8cm}
\includegraphics[height=36mm]{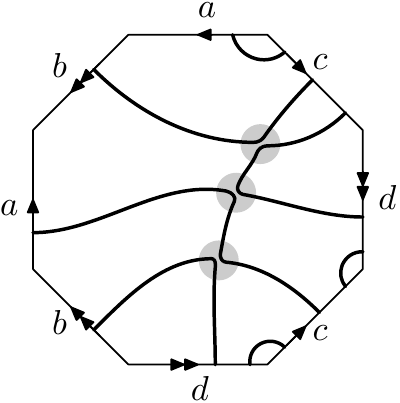} 
\caption{A minimal genus diagram of $3.1$ and the states $S_A$ and $S_B$.} \label{fig:3-1surface}
\end{figure}

\begin{example} \label{example-3-1}
Figure \ref{fig:3-1} shows the virtual knot $K=3.1$ and its mirror images $K^*, K^\dag$,
and  Figure \ref{fig:3-1surface} shows a minimal genus diagram of $K$ on a genus $2$ surface $\Si_2$. The states $S_A$ and $S_B$ are shown in Figure \ref{fig:3-1surface} with shading around the smoothed crossings. From that, one can see that $|S_A| = 1, r(S_A) = 1$ and $|S_B|=1, r(S_B) =1.$ Resmoothing one of the crossings in $S_A$, one can show that two of the three states $AAB, ABA, BAA$ have $|S| =1$ and $r(S)=1$, and the third has $|S|=2$ and $r(S)=2.$ Resmoothing one of the crossings in $S_B$, one can further show that two of the three states $ABB, BAB, BBA$ have $|S| =2$ and $r(S)=2$, and the third has $|S| =1$ and $r(S)=1$.

Since $w(D)=-1,$ we have
\begin{eqnarray*}
\lb D \rb_{\Si_2} &=&A^3z+2Az+Az^2+2A^{-1}z^2 +A^{-1}z+A^{-3}z,\\
\wt{J}_{K}(t,z)&=&-\left(t^{-3/2}+2t^{-1}+t^{-1/2}+1\right)z-\left(t^{-1}+2t^{-1/2}\right)z^2.
\end{eqnarray*}
Thus, Proposition \ref{star} applies to show
$$\wt{J}_{K^*}(t,z)=\wt{J}_{K^\dag}(t,z)=-\left(1+t^{1/2}+2t+t^{3/2}\right)z-\left(2t^{1/2}+t\right)z^2.$$
\end{example}


\section{A Kauffman-Murasugi-Thistlethwaite theorem} \label{sec-4}
In this section, we prove the Kauffman-Murasugi-Thistlethwaite theorem for alternating links in surfaces.

\begin{theorem} \label{thm:KMT}
Let $L$ be a link in the thickened surface $\Si \times I$. If $L$ admits a connected reduced alternating link diagram on $\Si$ with $n$ crossings, then any other link diagram for $L$ has at least $n$ crossings.
\end{theorem}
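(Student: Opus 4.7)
The plan is to follow the Kauffman--Murasugi--Thistlethwaite pattern, using the span of the homological Kauffman bracket $\spn(\lb \cdot \rb_\Si)$ as a crossing-number lower bound. By Lemmas \ref{lemma:RM-invariance1} and \ref{lemma:RM-invariance2}, this span is invariant under all three Reidemeister moves and hence is a link invariant. First, I would apply Proposition \ref{prop:adequate} to the connected reduced alternating diagram $D$ to see that $D$ is adequate, and then Theorem \ref{thm:span} to obtain
\[
\spn(\lb D \rb_\Si) = 4n - 4g + 4,
\]
where $g = \genus(\Si)$.

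Next, for any other diagram $D'$ for $L$ on $\Si$ with $n'$ crossings, invariance of the span yields $\spn(\lb D' \rb_\Si) = 4n - 4g + 4$. I would then apply the general upper bound of Corollary \ref{cor-adequate} (which does \emph{not} require $D'$ to be adequate):
\[
\spn(\lb D' \rb_\Si) \leq 2n' + 2\,k(S_A(D')) + 2\,k(S_B(D')).
\]
Since $S_B(D')$ is the dual state of $S_A(D')$, provided $D'$ is cellularly embedded on $\Si$, the dual state lemma (Lemma \ref{lemma:dual-state}(b)) yields
\[
k(S_A(D')) + k(S_B(D')) \leq n' + 2 - 2g.
\]
Combining these inequalities gives
\[
4n - 4g + 4 \;\leq\; 2n' + 2(n' + 2 - 2g) \;=\; 4n' - 4g + 4,
\]
which rearranges to $n \leq n'$.

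The hardest step will be justifying that any diagram $D'$ for $L$ on $\Si$ is cellularly embedded, so that Lemma \ref{lemma:dual-state}(b) actually applies. The plan here is to exploit the fact that the reduced (hence cellularly embedded) diagram $D$ forces $\Si$ to be the minimum-genus surface for $L$: every simple closed curve in $\Si \sm D$ sits inside a disk region and so bounds a disk in $\Si$, meaning no vertical annulus in $\Si \times I$ disjoint from $L$ can effect a destabilization. By Kuperberg's theorem (Theorem \ref{thm:kuperberg}), $(L, \Si)$ is then the unique irreducible representative of its stable equivalence class. Moreover, the connectedness of $D$ together with Corollary \ref{cor:split} rules out $L$ being split. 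Under these two constraints, any non-disk complementary region of $D'$ would either supply a non-null-homotopic simple closed curve in $\Si$ disjoint from $D'$ (producing a destabilization of $L$, contradicting irreducibility) or force $L$ to split---a contradiction either way. Hence $D'$ must be cellularly embedded, and the bracket-span argument closes.
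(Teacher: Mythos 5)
Your main argument is structurally identical to the paper's: apply Proposition~\ref{prop:adequate} and Theorem~\ref{thm:span} to get $\spn(\lb D \rb_\Si) = 4n - 4g + 4$ for the reduced alternating diagram, then bound $\spn(\lb D' \rb_\Si) \leq 4n' - 4g + 4$ using Corollary~\ref{cor-adequate} and Lemma~\ref{lemma:dual-state}(b), and conclude by invariance of the span. To your credit, you notice a subtlety the paper's proof passes over: Lemma~\ref{lemma:dual-state}(b) requires $D'$ to be cellularly embedded, which is not automatic.

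However, your proposed justification of cellular embedding contains a genuine flaw. You claim that because the reduced diagram $D$ is cellularly embedded, no vertical annulus disjoint from $L$ can give a destabilization, and hence by Kuperberg's theorem $(L,\Si)$ is the irreducible representative. This does not follow. Cellular embedding of the particular diagram $D$ only rules out a destabilizing curve disjoint from that specific projection; it says nothing about whether some isotopy of $L$ in $\Si \times I$ produces a representative that admits a destabilizing annulus, which is what irreducibility requires. Concretely, a cellularly embedded reduced diagram can live on a non-minimal surface: take the $2$-crossing minimal genus diagram of the virtual trefoil on $T^2$ (Figure~\ref{fig:2-1-in-torus}) and change one crossing. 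The underlying $4$-valent graph on $T^2$ is unchanged, so the diagram is still cellularly embedded with no nugatory crossings, yet it now represents the unknot, whose virtual genus is $0 < 1 = g(T^2)$. So ``reduced'' alone does not force $\Si$ to be minimal genus; the alternating condition is essential, and establishing that a reduced alternating diagram is minimal genus uses genuinely different input (tg-hyperbolicity of Adams et al., or the Gordon--Litherland pairing), which the paper invokes only in Section~\ref{sec-5} for Theorem~\ref{thm:virtual-tait}. Your span inequalities are the paper's proof of Theorem~\ref{thm:KMT}; the cellular-embedding step you appended does not close the gap in the way you intend.
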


\begin{proof}
Let $D$ be an arbitrary connected link diagram on $\Si$ with $n$ crossings.  Lemma \ref{lemma:dual-state} and Corollary \ref{cor-adequate} combine to show that
$$\spn(\lb D \rb_\Si) \leq 2n + 2k(S_A) + 2k(S_B) \leq 4n + 4-4g.$$
 
In case $D$ is a connected, reduced, alternating diagram for the link $L \subset \Si \times I$, Theorem \ref{thm:span} implies that $\spn(\lb D \rb_\Si) = 4n-4g+4.$ If $L$ were to admit a diagram $D'$ with fewer crossings,  then the above considerations would imply that $\spn(\lb D' \rb_\Si) < 4n-4g+4,$ which gives a contradiction to the fact that $\spn(\lb D \rb_\Si) = \spn(\lb D' \rb_\Si)$.
\end{proof}

We now explain how to deduce that the writhes of two reduced alternating diagrams for the same link in $\Si \times I$ are equal. 

\begin{definition}
Given a link diagram $D$ in $\Si \times I$, we define its \emph{$r$-parallel}  $D^r$ to be the link diagram in $\Si \times I$ in which each link component of $D$ is replaced by $r$ parallel copies, with each one repeating the same ``over'' and ``under'' behavior of the original component. 
\end{definition}

\begin{lemma}
If $D$ is $A$-adequate, then $D^r$ is also $A$-adequate. If $D$ is $B$-adequate, then $D^r$ is also $B$-adequate. 
\end{lemma}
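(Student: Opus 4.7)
The plan is to show that inside $D^r$ every one-$B$-smoothing state $S'$ arises from $S_A^{D^r}$ by a cycle-fusion saddle, so that $|S'|=|S_A^{D^r}|-1$ and hence $k(S')\le k(S_A^{D^r})$. The case $r=1$ is tautological since $D^1=D$; the real work is for $r\ge 2$, where I will show $D^r$ is $A$-adequate regardless of whether $D$ is.

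First I would describe $S_A^{D^r}$ explicitly. Inside each $r\times r$ grid of crossings that replaces an original crossing $c$ of $D$, a straightforward zigzag tracing (alternating NW- and SE-smoothing arcs of the grid crossings) shows that the $2r$ boundary points of the grid are paired by boundary arcs as $t_k\leftrightarrow\ell_k$ and $e_k\leftrightarrow s_k$ for $k=1,\dots,r$. These arcs are exactly the $r$ blackboard parallels of the two smoothing arcs at $c$. Stitching the grids together along the $r$-parallel edges of $D$, each cycle $\gamma$ of $S_A^D$ lifts to $r$ parallel cycles $\gamma^{(1)},\dots,\gamma^{(r)}$ in $S_A^{D^r}$, all homologous mod $2$ to $\gamma$. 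Consequently $|S_A^{D^r}|=r|S_A^D|$ and $r(S_A^{D^r})=r(S_A^D)$.

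Next, let $S'$ differ from $S_A^{D^r}$ by switching one grid crossing $c_{ij}$ from $A$ to $B$. Indexing the $2r$ boundary arcs of the grid cyclically (NW-arcs carrying indices $1,\dots,r$ and SE-arcs carrying indices $r{+}1,\dots,2r$), the zigzag description places the NW-smoothing arc at $c_{ij}$ on the boundary arc of index $i+j-1$ and the SE-smoothing arc at $c_{ij}$ on the next boundary arc, index $i+j$. In the self-abutting case for $c$ in $S_A^D$, the parallel copies $\gamma^{(k)}$ pair NW-$k$ with SE-$k$ (same residue $k$), so the two arcs at $c_{ij}$ sit on cycles with residues differing by exactly $1\pmod r$, hence on distinct cycles of $S_A^{D^r}$ whenever $r\ge 2$. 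In the non self-abutting case the two arcs lie on parallel copies of two distinct cycles $\gamma_1$ and $\gamma_2$ and are thus even more obviously disjoint. Therefore the saddle at $c_{ij}$ is always a fusion.

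From fusion, $|S'|=|S_A^{D^r}|-1$, and in $\mathbb{Z}/2$ homology the merged cycle is the sum of the two merged cycles, so $r(S')\ge r(S_A^{D^r})-1$. Hence
\[
k(S')=|S'|-r(S')\le (|S_A^{D^r}|-1)-(r(S_A^{D^r})-1)=k(S_A^{D^r}),
\]
which establishes $A$-adequacy of $D^r$. The $B$-adequacy statement follows by applying the same argument to the mirror diagram, which interchanges the roles of the $A$- and $B$-smoothings. The main obstacle is the combinatorial bookkeeping in the zigzag step: verifying precisely that the boundary arcs of the all-$A$ grid are $t_k\leftrightarrow\ell_k$ and $e_k\leftrightarrow s_k$, and that the two smoothing arcs at any interior grid crossing $c_{ij}$ carry consecutive indices $i+j-1$ and $i+j$. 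This is routine but must be set up carefully, e.g.\ by induction on $r$ or by direct enumeration of the zigzag step by step.
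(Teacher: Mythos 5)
Your proposal has a critical error in the analysis of the self‑abutting case. You assert that in $S_A(D^r)$ the parallel copies $\gamma^{(k)}$ pair the NW‑arc of index $k$ with the SE‑arc of index $r+k$ (``same residue $k$''), and deduce that at every grid crossing the two smoothing arcs lie on distinct cycles, so that every one‑crossing resmoothing of $D^r$ is a fusion. This pairing is wrong, and the conclusion drawn from it is false.

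The $r$ parallel copies of a state cycle $\gamma$ form a nested family in an annular neighborhood of $\gamma$. When $\gamma$ self‑abuts at $c$, there is a well‑defined innermost copy, the one pushed towards the bigon‑like region sitting between the two arcs of $\gamma$ at $c$. Near the $r\times r$ grid replacing $c$, pushing towards that region means pushing towards the grid's center on \emph{both} arcs. Hence the innermost copy contains the NW‑arc closest to the center \emph{and} the SE‑arc closest to the center. In your cyclic labeling these are the arcs of index $r$ and $r+1$, i.e.\ NW‑index $r$ and SE‑index $1$; the outside pairing must therefore match NW‑index $k$ with SE‑index $r+1-k$, not with SE‑index $k$. (The pairing $A_k\leftrightarrow B_k$ would produce a curve that is near $\gamma$ on one side of $c$ but far on the other, which cannot be a parallel copy.) As a consequence, at every grid crossing $c_{ij}$ on the anti‑diagonal $i+j=r+1$, both smoothing arcs (your indices $r$ and $r+1$) lie on the \emph{same} cycle, namely the innermost parallel copy, which is therefore self‑abutting. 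Switching there is a fission or a single‑cycle smoothing, not a fusion.

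This is not a fixable gap in the write‑up: your argument, if correct, would prove that $D^r$ is $A$‑adequate for all $r\ge 2$ with no hypothesis on $D$ at all, which is false. If $D$ fails $A$‑adequacy at a self‑abutting crossing $c$, one checks that the homological rank behaves the same under the corresponding anti‑diagonal switch in $D^r$, so $D^r$ fails $A$‑adequacy as well. The paper's proof is exactly the careful treatment of these anti‑diagonal (innermost‑strand) crossings: it observes that they are the \emph{only} self‑abutting crossings of $S_A(D^r)$, and then invokes the $A$‑adequacy of $D$ to control $k(S'')$ after switching one of them, splitting into the fission and single‑cycle cases. You should rework your argument along those lines rather than trying to avoid the hypothesis on $D$.
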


\begin{proof}
Let $S_{A}(D)$ be the all $A$-smoothing of $D$ and $S_{A}(D^r)$ the all $A$-smoothing of the $r$-parallel $D^r.$ It is straightforward to check that $S_{A}(D^r)$ is the $r$-parallel of $S_{A}(D)$. Therefore, a cycle in $S_{A}(D^r)$ is self-abutting if and only if it is the innermost strand parallel to a self-abutting cycle of $S_{A}(D)$.

Let $S'$ be the state obtained from $S_A(D)$ by switching the smoothing from $A$ to $B$, and let $S''$ be the state obtained from $S_A(D^r)$ by switching the corresponding crossing on the innermost strand. Switching the smoothing at a self-abutting cycle is either a single-cycle smoothing or increases the number of cycles.

Suppose firstly it is a single cycle smoothing. (This corresponds to case (iii) from the proof of Lemma \ref{lemma3.6}.) Then $|S'| = |S_A(D)|,$ and since $D$ is $A$-adequate, we have $k(S') \leq k(S_A(D))$ and $r(S') \geq r(S_A(D))$. Notice that $S''$ has the same homological rank as $S'$, and that $|S''| = |S_A(D^r)|$. Thus $k(S'') \leq k(S_A(D^r))$ as required.

Now suppose that $|S'|=|S_A(D)|+1.$  (This corresponds to case (ii) from the proof of Lemma \ref{lemma3.6}.) Since $D$ is $A$-adequate, the homological rank increases under making the switch from $S_A(D)$ to $S'$. But $S''$ has the same homological rank as $S'$, thus the same is true under making the switch from $S_{A}(D^r)$ to $S''$. In particular, this shows that $k(S'') \leq  S_{A}(D^r)$. 

A similar argument can be used to show the second part, namely that if $D$ is $B$-adequate, then $D^r$ is also $B$-adequate. The details are left to the reader.
\end{proof}

The next result follows by adapting Stong's argument \cite{Stong-1994} (cf. Theorem 5.13 \cite{Lickorish}). The proof is by now standard, but it is included for the reader's convenience.

\begin{theorem} \label{thm:stong}
Let $D$ and $E$ be two link diagrams on $\Si$ that represent isotopic oriented links in $\Si \times I.$ If $D$ is $A$-adequate, then $n_D-w(D)\leq n_E-w(E)$, where $n_D$ and $n_E$ are the number of crossings of the diagrams $D$ and $E$, respectively.
\end{theorem}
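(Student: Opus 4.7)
The plan is to adapt Stong's parallel trick \cite{Stong-1994} to the homological Kauffman bracket setting. Fix a large integer $r$ and consider the $r$-parallels $D^r$ and $E^r$ with their blackboard framings. By the preceding lemma, $D^r$ is again $A$-adequate. Since the all-$A$ smoothing of the $r$-parallel is the $r$-cable of $S_A$, one has $|S_A(D^r)|=r|S_A(D)|$ and $\rho(S_A(D^r))=\rho(S_A(D))$, where $\rho(S):=\dim\operatorname{image}(H_1(S)\to H_1(\Si))$. Applying Lemma \ref{lemma3.6}(i) gives the exact value
\[
d_{\max}(\lb D^r\rb_\Si)=r^2 n_D+2r|S_A(D)|-2\rho(S_A(D)),
\]
whereas for $E^r$ the same lemma only provides the upper bound $d_{\max}(\lb E^r\rb_\Si)\le r^2 n_E+2r|S_A(E)|-2\rho(S_A(E))$.

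The central step is to relate $\lb D^r\rb_\Si$ and $\lb E^r\rb_\Si$. Assume without loss of generality $w(E)\ge w(D)$; the opposite case is symmetric, obtained by applying the correction below to $E^r$ instead. Modify $D^r$ into a new diagram $D^r_{\mathrm{corr}}$ by adjoining $w(E)-w(D)$ positive kinks on each of the $r$ strands, together with $w(E)-w(D)$ full positive twists of the entire $r$-cable. The kinks correct the per-strand self-framings from $w(D)$ to $w(E)$, while the full twists correct each pairwise linking from $w(D)$ to $w(E)$, so $D^r_{\mathrm{corr}}$ and $E^r$ represent the same framed link in $\Si\times I$, and therefore $\lb D^r_{\mathrm{corr}}\rb_\Si=\lb E^r\rb_\Si$. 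A direct case-check confirms that $D^r_{\mathrm{corr}}$ remains $A$-adequate: switching a kink crossing removes a disjoint trivial loop, while switching one crossing inside a full positive twist merges two parallel copies of the same cycle into a null-homologous one, and in every case $k(S_A)$ does not increase. Hence Lemma \ref{lemma3.6}(i) applies to $D^r_{\mathrm{corr}}$ with equality, and combining with the upper bound for $\lb E^r\rb_\Si$ yields
\[
r^2\bigl[(n_D-w(D))-(n_E-w(E))\bigr]+O(r)\le 0.
\]
Dividing by $r^2$ and letting $r\to\infty$ forces $n_D-w(D)\le n_E-w(E)$.

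The main obstacle is the geometric bookkeeping in the framing-correction step: one must verify that the precise combination of kinks and full twists genuinely identifies $D^r_{\mathrm{corr}}$ with $E^r$ as framed links in $\Si\times I$ (simultaneously matching the per-strand self-framings and the pairwise linkings between strands), and that $D^r_{\mathrm{corr}}$ remains $A$-adequate after these insertions. Both verifications are routine once the case analysis sketched above is carried out in detail, after which the leading-order $r^2$ asymptotic delivers the inequality directly.
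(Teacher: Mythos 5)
Your overall strategy (cable to amplify the $r^2$ coefficient, use $A$-adequacy to get equality from Lemma \ref{lemma3.6}(i), compare leading coefficients) is the same as the paper's, but your framing-correction step has a genuine gap for links with more than one component, and the paper's version of the correction is arranged differently precisely to avoid the difficulties you run into.

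The issue is that your correction treats the framing discrepancy as the single global number $w(E)-w(D)$. For a knot this is fine, but for a link $L=\bigcup_i L_i$ the quantity that controls the framing of the $r$-cable is the \emph{per-component} self-writhe $w(D_i)$: the $r$ parallel copies of $L_i$ each acquire self-writhe $w(D_i)$ and pairwise linking $w(D_i)$, while the pairwise linkings between cables of distinct components are the invariants $\lk(L_i,L_j)$ and never need correction. Since the discrepancies $w(E_i)-w(D_i)$ can differ in sign from component to component even when the totals satisfy $w(E)\ge w(D)$, your global WLOG and your uniform insertion of $w(E)-w(D)$ kinks and twists do not make $D^r_{\mathrm{corr}}$ regularly isotopic to $E^r$; some components would need negative corrections, and negative kinks destroy $A$-adequacy (the all-$A$ state gains no new null-homotopic circle, but switching the kink crossing does add one, so $k$ increases). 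The paper sidesteps all of this by choosing, for each component $i$, nonnegative integers $\mu_i,\nu_i$ with $w(D_i)+\mu_i=w(E_i)+\nu_i$ and adding $\mu_i$ (resp.\ $\nu_i$) positive kinks to $D_i$ (resp.\ $E_i$) \emph{before} cabling; then $w(D')=w(E')$ follows from the per-component equalities plus invariance of the linking numbers, $A$-adequacy of $D'$ is immediate (only positive kinks are ever added to $D$, and the cabling lemma does the rest), and no full twists appear at all. This last point matters: your one-line case check that inserting full positive twists of the $r$-cable preserves $A$-adequacy is not obviously correct, and it is not a routine verification (the all-$A$ state of a full twist of $r\ge 3$ strands is not simply two parallel copies of a cycle merging); the paper's ordering of operations, kinks-then-cable rather than cable-then-kinks-and-twists, makes this verification unnecessary. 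Your asymptotic bookkeeping is correct once the framings have genuinely been matched, so the fix is to adopt the per-component $\mu_i,\nu_i$ normalization before passing to $r$-parallels.
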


\begin{proof}
Let $\{L_i \mid i=1,\ldots, m\}$ be the components of $L$, and let $D_i$ and $E_i$ be the subdiagrams of $D$ and $E$ corresponding to $L_i$. For each $i =1,\ldots, m,$ choose non-negative integers $\mu_i$ and $\nu_i$ such that $w(D_i)+\mu_i=w(E_i)+\nu_i$. Let $D'_i$ be the result of changing $D_i$ by adding $\mu_i$ positive kinks, and let $E'_i$  be the result of adding $\nu_i$ positive kinks to $E_i$. Notice that $D'$ is still $A$-adequate.

The writhes of the individual components satisfy:
$$w(D'_{i})=w(D_i)+\mu_i=w(E_i)+\nu_i=w(E'_{i}),$$
and the contributions from the mixed crossings of $D'$ and $E'$ are both equal to the total linking number  ${\rm link}(L) = \sum_{i\neq j} \lk(L_i,L_j)$, which is an invariant of the oriented link $L$. It follows that $w(D')=w(E')$. 

For any $r$, take $(D')^r$ and $(E')^r$. Then $w((D')^r)=r^2 w(D')$, because in forming the $r$-parallel of a diagram, each crossing is replaced by $r^2$ crossings of the same sign. The diagrams $(D')^r$ and $(E')^r$, are equivalent and have the same writhe, thus their homological Kauffman brackets must be equal. In particular we have $d_{\max}(\lb (D')^r\rb_\Si)=d_{\max}(\lb (E')^r \rb_\Si)$. Lemma \ref{lemma3.6} now implies that
\begin{eqnarray*}
d_{\max}(\lb(D')^r \rb_\Si)&=& \left(n_D+\sum_{i=1}^m \mu_i\right)r^2+2\left(k(S_{A}(D))+\sum_{i=1}^m \mu_i\right)r,\\
d_{\max}(\lb(E')^r \rb_\Si)&\leq& \left(n_E+\sum_{i=1}^m \nu_i\right)r^2+2\left(k(S_A(E))+\sum_{i=1}^m \nu_i \right)r.
\end{eqnarray*}
Since this is true for all $r$, comparing coefficients of the $r^2$ terms,  we find that: 
\begin{equation}\label{eqn:compare}
 n_D+\sum_{i=1}^m \mu_i \leq n_E+\sum_{i=1}^m \nu_i. 
 \end{equation}
Subtracting $\sum_{i=1}^m (\mu_i +w(D_i))= \sum_{i=1}^m (\nu_i + w(E_i))$ from both sides of \eqref{eqn:compare}, we get that
\begin{equation}\label{eqn:new}
n_D-\sum_{i=1}^m w(D_i)\leq n_E-\sum_{i=1}^m w(E_i).
\end{equation}
Subtracting the total linking number ${\rm link} (L)$ from both sides of \eqref{eqn:new} gives the desired inequality.
\end{proof}

\begin{corollary} \label{cor:KMT}
Let $D$ and $E$ be link diagrams on $\Si$ with $n_D$ and $n_E$ crossings, respectively, for the same oriented link $L$ in $\Si \times I$.
\begin{itemize}
\item[(i)] If $D$ is $A$-adequate, then the number of negative crossings of $D$ is less than or equal to the number of negative crossings of $E$.
\item[(ii)] If $D$ is $B$-adequate, then the number of positive crossings of $D$ is less than or equal to the number of positive crossings of $E$.
\item[(iii)] An adequate diagram has the minimal number of crossings.
\item[(iv)] Two adequate diagrams of an oriented link in $\Si \times I$ have the same writhe.
\end{itemize}
\end{corollary}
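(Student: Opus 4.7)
The plan is to derive all four parts directly from Theorem \ref{thm:stong}, using the simple arithmetic relations $n_D - w(D) = 2 n_-(D)$ and $n_D + w(D) = 2 n_+(D)$, where $n_\pm(D)$ denotes the number of positive (respectively negative) crossings of $D$.

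For part (i), I would start by rewriting the inequality $n_D - w(D) \leq n_E - w(E)$ provided by Theorem \ref{thm:stong} as $2 n_-(D) \leq 2 n_-(E)$, which immediately yields the claim. For part (ii), the natural move is to pass to the vertical mirror image. Since $D$ and $E$ both represent $L$, the diagrams $D^*$ and $E^*$ both represent $L^*$, and the definition of $B$-adequacy together with the symmetry between $A$- and $B$-smoothings under mirror reflection (used in the last line of the proof of Lemma \ref{lemma3.6}) shows that $D$ is $B$-adequate if and only if $D^*$ is $A$-adequate. Applying Theorem \ref{thm:stong} to $D^*$ and $E^*$ therefore gives $n_{D^*} - w(D^*) \leq n_{E^*} - w(E^*)$; since $n_{D^*} = n_D$, $n_{E^*} = n_E$, and $w(D^*) = -w(D)$, $w(E^*) = -w(E)$, this rearranges to $n_D + w(D) \leq n_E + w(E)$, i.e., $n_+(D) \leq n_+(E)$.

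Parts (iii) and (iv) then follow by combining (i) and (ii). For (iii), if $D$ is adequate, then both inequalities apply simultaneously, and adding them gives $n_-(D) + n_+(D) \leq n_-(E) + n_+(E)$, that is, $n_D \leq n_E$, so $D$ has minimal crossing number. For (iv), if both $D$ and $E$ are adequate diagrams of the same oriented link, then applying (i) twice (once with $D$ adequate and once with $E$ adequate) gives $n_-(D) = n_-(E)$; applying (ii) similarly gives $n_+(D) = n_+(E)$. Subtracting yields $w(D) = w(E)$, and two adequate diagrams of the same oriented link have equal writhe.

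The main obstacle I anticipate is making precise the claim in part (ii) that $B$-adequacy of $D$ implies $A$-adequacy of $D^*$. This rests on the observation that vertical mirroring interchanges $A$- and $B$-smoothings while preserving the homology and cycle count of each resolution, so $S_A(D^*)$ corresponds to $S_B(D)$ and single-switch neighbors correspond bijectively; once this is noted, the inequality on $k$ defining adequacy transfers verbatim. After that, the four assertions become essentially arithmetic consequences of Theorem \ref{thm:stong}, and no further bracket computations are required.
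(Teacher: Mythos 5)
Your proof is correct, and on parts (i), (iii), and (iv) it coincides with the paper's argument: (i) is the same expansion of $n_D - w(D) \leq n_E - w(E)$ into $2n_-(D) \leq 2n_-(E)$; (iii) adds the two inequalities; and (iv) applies the two one-sided bounds in both directions, which is logically the same as the paper's version even though the paper phrases it via $n_D = n_E$ and the two writhe inequalities.

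The genuine difference is in part (ii). The paper re-runs the Stong argument of Theorem \ref{thm:stong} using negative kinks and $d_{\min}$ in place of positive kinks and $d_{\max}$, which requires one to track a parallel set of inequalities from Lemma \ref{lemma3.6}(ii). You instead pass to vertical mirror images: $D$ is $B$-adequate iff $D^*$ is $A$-adequate (a consequence of the observation, already used in Lemma \ref{lemma3.6} and spelled out in the proof of Proposition \ref{star}, that mirroring swaps $A$- and $B$-smoothings while preserving $|S|$, $k(S)$, $r(S)$), and $D^*$, $E^*$ both represent $L^*$ in the same $\Si \times I$. Applying Theorem \ref{thm:stong} to $D^*$ and $E^*$ and using $n_{D^*}=n_D$, $w(D^*)=-w(D)$ immediately gives $n_D + w(D) \leq n_E + w(E)$. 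This is cleaner: it reuses Theorem \ref{thm:stong} as a black box rather than reproving a variant of it, at the small cost of needing the mirror-symmetry of adequacy, which is an easy and independently useful fact. Both routes are valid; yours avoids the bookkeeping about negative kinks and $d_{\min}$ that the paper leaves implicit.
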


\begin{proof}
\noindent (i) Let, $n_{+}$ and $n_{-}$ be the number of positive and negative crossings, respectively. We have 
\begin{eqnarray*} n_D-w(D) &\leq& n_E-w(E),\\
n_{+}(D)+n_{-}(D)-(n_{+}(D)-n_{-}(D))&\leq& n_{+}(E)+n_{-}(E)-(n_{+}(E)-n_{-}(E)),\\
n_{-}(D)&\leq& n_{-}(E).\end{eqnarray*}

\noindent (ii) Use the negative kinks in the proof of Theorem \ref{thm:stong}. It follows that

\begin{eqnarray*}
-n_D+\sum_i \mu_i \geq -n_E+\sum_i \nu_i &\Longrightarrow& n_D-\sum_i \mu_i\leq n_E-\sum_i \nu_i,\\ 
n_D+w(D) \leq n_E+w(E)&\Longrightarrow& n_{+}(D)\leq n_{+}(E).
\end{eqnarray*}

\noindent (iii) Follows from (i) and (ii).

\noindent (iv) From (iii), we have $n_D=n_E$. It follows from Theorem \ref{thm:stong} that
$$n_D-w(D)\leq n_E-w(E)\Rightarrow w(E)\leq w(D).$$
From (ii), we have 
$$n_D+w(D)\leq n_E+w(E)\Rightarrow w(D)\leq w(E).$$
Therefore $w(D)=w(E)$.
\end{proof}
 
The corollary above shows that the first and second Tait Conjectures hold for reduced alternating links in surfaces. 
 

\section{The Tait conjectures for virtual links} \label{sec-5}
In this section, we will prove the first and second Tait conjectures for virtual links using  the results from the previous section. Corollary \ref{cor:KMT} gives the desired conclusion for links in a fixed thickened surface, and it remains to extend the statement to stably equivalent links in thickened surfaces. 

This will be achieved in two steps. In the first step, we will show that any reduced alternating diagram $D$ of a virtual link $L$ has minimal genus. Thus, Corollary \ref{cor:KMT} applies to show that $D$ has minimal crossing number among all minimal genus diagrams for $L$. 
In the second, we will show that any non-minimal genus diagram $D'$ for $L$ has crossing number $n(D') \geq n(D).$ This will be proved by relating the spans of $\lb D' \rb_{\Si'}$ and $\lb D \rb_\Si$. 


We first claim that if $L$ is an alternating virtual link, then any alternating virtual link diagram $D$ for $L$ has minimal genus. There are several ways to prove this. One way is to use a recent result of Adams et al.~from \cite{Adams-2019a} to see that any alternating virtual link diagram for $L$ represents a tg-hyperbolic link $\sL \subset \Si \times I$ in a thickened surface. Therefore, by \cite[Theorem 1.2]{Adams-2019b} tg-hyperbolicity implies that this diagram is a minimal genus representative for $L$. Another way is to use the Gordon-Litherland pairing for links in thickened surfaces \cite{Boden-Chrisman-Karimi-2019}. One can compute that any alternating virtual link diagram for $L$ has nullity equal to zero, which implies that the diagram is minimal genus. 

Thus, any reduced alternating diagram $D$ for $L$ is minimal genus, and Corollary \ref{cor:KMT} implies that any other minimal genus diagram $D'$ for $L$ has $n(D') \geq n(D)$. 

To complete the proof, we must rule out the possibility of a minimal crossing diagram which is not minimal genus. The following conjecture takes care of that and would lead to a direct proof of the Tait conjectures for virtual links. 

\begin{conjecture} \label{conj:last}
Given a virtual link $L$, any minimal crossing diagram for it has minimal genus.
\end{conjecture}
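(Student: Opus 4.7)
The plan is to use the $A$-span of the homological Kauffman bracket $\spn(\lb D \rb_{\Si_D})$ as a bridge between the crossing number $n(D)$ and the Kamada--Kamada genus $g(\Si_D)$. The key first step is to show that this span is an invariant of the virtual link $L$, depending only on $L$ and not on the chosen cellularly embedded diagram $D$. Although the bracket itself is surface-sensitive---the homological rank $r(S)$ of a state can change under moves that alter $g(\Si_D)$, such as certain R2 moves---one expects the top and bottom $A$-degrees to transform compatibly. I would verify this claim by examining each Reidemeister and detour move in turn and checking that the extremal $A$-degrees of $\lb D\,|\,S \rb_{\Si_D}$ change in a way that balances any change in $g(\Si_D)$ against the corresponding change in $k(S_A) + k(S_B)$.

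Granted this invariance, denote the common value by $\spn(L)$. For a reduced alternating diagram $D_0$ of $L$ (which is adequate, and has minimal genus by Proposition \ref{prop:adequate} and by the results invoked in Section \ref{sec-5}), Theorem \ref{thm:span} gives $\spn(L) = 4 n(D_0) + 4 - 4 g_v(L)$. For any other cellularly embedded diagram $D$ of $L$, Corollary \ref{cor-adequate} combined with Lemma \ref{lemma:dual-state}(b) yields
\[
\spn(L) \;=\; \spn(\lb D \rb_{\Si_D}) \;\leq\; 4\, n(D) + 4 - 4\, g(\Si_D).
\]
Subtracting these two relations gives $n(D_0) - g_v(L) \leq n(D) - g(\Si_D)$. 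If $D$ is a minimal crossing diagram, then $n(D) \leq n(D_0)$, which forces $g(\Si_D) \leq g_v(L)$. Combined with the trivial inequality $g(\Si_D) \geq g_v(L)$, we conclude $g(\Si_D) = g_v(L)$.

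The principal obstacle is the invariance claim for $\spn(L)$. Since the bracket polynomial itself is not a virtual link invariant, proving that just its $A$-span is invariant requires a delicate move-by-move analysis, particularly for R2 moves where the \emph{bigon} face wraps around a handle of $\Si_D$ and the move changes $g(\Si_D)$. One must show that any loss or gain of contributions to the extremal $A$-degrees is exactly balanced by the corresponding change in $g(\Si_D)$ and $k(S_A)+k(S_B)$. A secondary difficulty is the strategy's reliance on the existence of a reduced alternating (or at least adequate minimal-genus) diagram for $L$. For virtual links that are neither alternating nor admit adequate minimal-genus representatives, this approach leaves a gap, and a refinement---perhaps exploiting the $z$-degree of the full Jones--Krushkal polynomial, or an independent topological simplification argument near destabilizing annuli---would be required to settle the conjecture in full generality.
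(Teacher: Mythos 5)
This statement is labelled a \emph{conjecture} in the paper, and the paper does not prove it in general. The authors only record that it is known to hold for virtual \emph{knots}, citing Manturov's homological parity argument, and then deliberately structure the rest of Section 5 so that Theorem \ref{thm:virtual-tait} is proved \emph{without} assuming Conjecture \ref{conj:last}. So your attempt is addressing something the paper itself left open, and should be judged on its own merits.

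There is a fatal gap at exactly the point you flag as the ``principal obstacle,'' and it cannot be repaired within this framework. The span $\spn(\lb D \rb_{\Si_D})$ is \emph{not} a virtual link invariant. It is invariant under isotopy of links in a fixed $\Si \times I$, but the paper explicitly observes that the homological bracket (hence its span) is not invariant under stabilization and destabilization, and the Kamada--Kamada surface $\Si_D$ does change genus under Reidemeister and detour moves on a virtual diagram. The paper's own computation in Section 5 makes this concrete: if $D$ on $\Si$ destabilizes along $\ell$ curves to $D'$ on $\Si'$, one obtains $\lb D'\rb_{\Si'}$ from $\lb D\rb_{\Si}$ by substituting $z \mapsto -A^2-A^{-2}$ into some terms, and this yields only the one-sided estimate $\spn(\lb D'\rb_{\Si'}) \leq \spn(\lb D\rb_{\Si}) + 4\ell$, not equality. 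Your ``delicate move-by-move analysis'' would have to establish that this is always an equality in the extremal degrees, which is both unproven and not to be expected in general. Without the invariance of $\spn(L)$, the subtraction step $n(D_0) - g_v(L) \leq n(D) - g(\Si_D)$ has no foundation.

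Moreover, even if one redoes the span comparison correctly as the paper does (tracking the $+4\ell$ correction through destabilization), the resulting chain of inequalities yields only $n(D_0) \leq n(D)$ for an arbitrary cellularly embedded $D$; when $n(D) = n(D_0)$, all the inequalities collapse to equalities but $\ell = g(\Si_D) - g_v(L)$ drops out of the final relation and is not forced to vanish. So the span method proves minimality of crossing number but cannot extract the genus statement; the two conclusions are genuinely different. Finally, as you already note, your argument requires $L$ to admit a reduced alternating diagram, so even if the invariance held it would only cover alternating virtual links, whereas the conjecture is asserted for all virtual links. The route the literature actually uses for the known case (virtual knots) is Manturov's parity theory, which is an entirely different technique from the Kauffman bracket span.
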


Conjecture \ref{conj:last} is known to be true for virtual knots. The proof is due to Manturov and uses homological parity \cite{Manturov-2013}.  As a consequence, we can give a simple proof of Tait's first and second conjectures for virtual knots.

\begin{theorem} \label{thm:Tait-knot}
Suppose $K$ is a virtual knot admitting an adequate diagram $D$ on a minimal genus surface $\Si$ with crossing number $n(D)$ and writhe $w(D)$. Then any other diagram $D'$ for $K$ has crossing number $n(D') \geq n(D)$. If $D_1$ and $D_2$ are two adequate diagrams of minimal genus for $K$, then $n(D_1)=n(D_2)$ and $w(D_1) =w(D_2).$
\end{theorem}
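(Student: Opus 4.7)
The plan is to combine three ingredients: Manturov's homological parity theorem (the known case of Conjecture~\ref{conj:last}) to force minimal crossing diagrams of a virtual knot to realize the virtual genus; Kuperberg's Theorem~\ref{thm:kuperberg} to identify all minimal genus representatives as living on the same surface up to diffeomorphism; and Corollary~\ref{cor:KMT} applied to adequate diagrams on that fixed surface.

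First I would address the crossing number statement. Let $D$ be the given adequate diagram for $K$ on the minimal genus surface $\Si$, and let $D'$ be any other virtual link diagram representing $K$. Choose a diagram $D''$ for $K$ whose crossing number equals the minimal crossing number $c(K)$; clearly $n(D'') \le n(D')$. By Manturov's theorem \cite{Manturov-2013} (the virtual knot case of Conjecture~\ref{conj:last}), $D''$ is a minimal genus diagram, so $D''$ is cellularly embedded on a closed oriented surface $\Si''$ with $\genus(\Si'') = g_v(K) = \genus(\Si)$. Kuperberg's Theorem~\ref{thm:kuperberg} now identifies the two minimal genus representatives: there is an orientation-preserving diffeomorphism of pairs $(\Si'' \times I, \Si'' \times \{0\}) \to (\Si \times I, \Si \times \{0\})$ carrying $D''$ to a diagram of $K$ on $\Si$ isotopic to $D$. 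Thus we may view $D$ and $D''$ as two diagrams of the same oriented link in $\Si \times I$. Since $D$ is adequate, Corollary~\ref{cor:KMT}(iii) gives $n(D) \le n(D'')  = c(K) \le n(D')$, which is the first claim.

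Next I would handle the writhe statement. Let $D_1$ and $D_2$ be adequate diagrams of minimal genus for $K$, so each lives on a surface of genus $g_v(K)$. Applying Kuperberg's theorem once more transports both of them onto a common surface $\Si$, where they represent the same oriented link in $\Si \times I$. The crossing number statement already proved forces $n(D_1) = n(D_2)$, and Corollary~\ref{cor:KMT}(iv), which asserts that any two adequate diagrams of the same oriented link in a thickened surface have equal writhe, yields $w(D_1) = w(D_2)$.

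The only genuinely nontrivial ingredient is Manturov's theorem, which is needed to rule out a diagram $D'$ with $n(D') < n(D)$ that sits on a surface of genus strictly larger than $g_v(K)$; without this one cannot migrate $D'$ onto $\Si$ to compare it with $D$ via Corollary~\ref{cor:KMT}. The rest of the argument is a bookkeeping reduction to the surface-level theorem already in hand, so the main substantive step is invoking \cite{Manturov-2013} and checking that its conclusion legitimately places the competitor diagram on a surface diffeomorphic to~$\Si$.
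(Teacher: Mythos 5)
Your proposal is correct and takes essentially the same approach as the paper: invoke Manturov's theorem (the knot case of Conjecture~\ref{conj:last}) to force any minimal-crossing competitor onto a minimal genus surface, identify that surface with $\Si$ via Kuperberg's Theorem~\ref{thm:kuperberg}, and then apply Corollary~\ref{cor:KMT}(iii) and (iv). You are somewhat more explicit than the paper about the Kuperberg transport step, but that is just filling in a detail the paper leaves implicit.
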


\begin{proof}
If $D'$ is a minimal crossing diagram for $K$, then Conjecture \ref{conj:last} implies $D'$ has minimal genus. Therefore, since $D$ is also a minimal genus diagram, Corollary \ref{cor:KMT} (iii) applies to show that $n(D) \leq n(D')$. If $D_1$ and $D_2$ are two adequate diagrams of minimal genus for $K$, then Corollary \ref{cor:KMT} applies to show that $n(D_1) =n(D_2)$ and $w(D_1) =w(D_2).$
\end{proof}

We will now show how to prove the Tait conjectures for virtual links without assuming Conjecture \ref{conj:last}. This is achieved by developing an alternative approach that involves comparing the spans of the homological brackets of links related by stabilization moves.

To that end, observe firstly that the homological Kauffman bracket is an invariant of unoriented links in thickened surfaces under regular isotopy, and that the Jones-Krushkal polynomial $\wt{J}_L(t,z)$ is an invariant of oriented links under isotopy and diffeomorphism of the thickened surface. As we have seen, however, neither is invariant under stabilization or destabilization.

Suppose then that $L$ is a virtual link and $D$ is representative link diagram on a surface $\Si$. By the Kamada-Kamada construction, we can assume that the inclusion map $D \hookrightarrow \Si$ is a cellular embedding (cf.~Remark \ref{rem:KK}).
 
If $D$ is not a minimal genus diagram for $L$, then it must admit a destabilizing curve $\ga$. Let $D'$ be the link diagram on the destabilized surface $\Si'$ of genus $g-1$ obtained by destabilizing $\Si$ along $\ga.$ Then it follows the homological Kauffman brackets and Jones-Krushkal polynomials of $D$ and $D'$ are related to one another in a more-or-less straightforward way. Namely, the bracket $\lb D' \rb_{\Si'}$  is obtained from $\lb D \rb_\Si$  by replacing $z$ by $-A^2-A^{-2}$ in some of the terms. The Jones-Krushkal polynomials are related in a similar fashion. Specifically, let $U$ be the subspace of $H_1(\Si)$ generated by $[\ga]$ and its Poincar\'e dual and suppose $S \in \sS(D)$ is a state for $D$ such that $i_*(H_1(S)) \cap U \neq 0$. Then under destabilization, if the homological rank of $S$ drops by one, then we substitute one $z$-factor in $\lb D \rb_\Si$ with $-A^2-A^{-2}.$ Otherwise, if   $i_*(H_1(S)) \cap U = 0$, then the homological rank does not change and we do not make the substitution. In either case, we see that $\spn(\lb D'\rb_{\Si'}) \leq \spn(\lb D \rb_{\Si}) + 4$.

Under ideal circumstances, a minimal genus diagram would be obtained from $D$ after one destabilization, but we may need to repeat this process finitely many times in order to obtain a minimal genus diagram. (This step uses Kuperberg's proof of Theorem \ref{thm:kuperberg}, which tells us that any non-minimal genus representative can be repeatedly destabilized to obtain a minimal genus representative.) Therefore, suppose that $\ga_1 \ldots, \ga_\ell$ are destabilizing curves for $D$, and let $D'$ be the link diagram  on the surface $\Si'$ obtained by destabilizing $\Si$ along $\ga_1 \ldots, \ga_\ell$. Notice that $\Si'$ has genus $g' = g - \ell$ and it is by assumption a surface of minimal genus for $L$.
 
Then as explained above, the bracket $\lb D'\rb_{\Si'}$ can be obtained from $\lb D \rb_\Si$ by substituting $z=-A^2-A^{-2}$ for up to $\ell$ of the $z$-factors in the terms $\lb D \, | \, S \rb_\Si$ for any given state $S \in \sS(D).$  The number of $z$-factors requiring substitution in $\lb D \, | \, S \rb_\Si$  is equal to the dimension of $i_*(H_1(S)) \cap U$, where $U \subset H_1(\Si)$ is the symplectic subspace generated by $[\ga_1], \ldots, [\ga_\ell]$. (Note that $\dim U = 2\ell$, since it also contains the Poincar\'e duals of $[\ga_1], \ldots, [\ga_\ell]$.) With each substitution the span of $\lb D \, | \, S \rb_\Si$ increases by four, thus it follows that $\spn(\lb D'\rb_{\Si'}) \leq \spn(\lb D \rb_\Si) + 4 \ell.$

Lemma \ref{lemma:dual-state} and Corollary \ref{cor-adequate}  imply that 
\begin{equation} \label{eqn:one}
\spn(\lb D \rb_\Si) \leq 4(n-g)+4,
\end{equation}
where $n = n(D)$ is the crossing number of $D$ and $g =g(\Si)$ is the genus of $\Si$.

Now suppose that $D''$ is a reduced alternating diagram for $L$. Then $D''$ necessarily has minimal genus, and since $D'$ is also a minimal genus diagram for $L$, Theorem \ref{thm:kuperberg} implies that $D''$ and $D'$ represent equivalent links in $\Si' \times I$. Therefore 
\begin{equation} \label{eqn:two}
\spn(\lb D''\rb_{\Si'}) = \spn(\lb D'\rb_{\Si'}) \leq  \spn(\lb D \rb_\Si) + 4\ell.
\end{equation}

In addition, Theorem \ref{thm:span} implies that 
\begin{equation} \label{eqn:three}
\spn(\lb D''\rb_{\Si'}) = 4(n'' - g') + 4 = 4(n'' - g + \ell)+4,
\end{equation}
where $n'' = n(D'')$ is the crossing number of $D''$.

Therefore, by Equations \eqref{eqn:one}, \eqref{eqn:two}, and \eqref{eqn:three}, we see that 
\begin{equation}
\spn\left(\lb D''\rb_{\Si'}\right) =4(n'' -g+\ell) + 4  \leq \spn(\lb D\rb_{\Si})+4\ell \leq 4(n -g) + 4 \ell + 4.
\end{equation}
Thus, we conclude from this that $n'' \leq n$. 

\begin{theorem} \label{thm:virtual-tait}
Suppose $L$ is a virtual link admitting a reduced alternating diagram $D$ on $\Si$ with crossing number $n(D)$ and writhe $w(D)$. Then any other diagram $D'$ for $L$ has crossing number $n(D') \geq n(D)$. If $D_1$ and $D_2$ are two reduced alternating diagrams for the same virtual link, then $n(D_1)=n(D_2)$ and $w(D_1) =w(D_2).$
\end{theorem}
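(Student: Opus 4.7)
The plan is to reduce the statement to Corollary \ref{cor:KMT}, which already handles the case where both diagrams live on the same thickened surface, and then to control what happens under stabilization/destabilization by tracking the span of the homological Kauffman bracket. The first step is to argue that any reduced alternating diagram $D$ for $L$ is automatically a \emph{minimal genus} representative of $L$. This can be shown either by invoking the tg-hyperbolicity result of Adams et al.~in \cite{Adams-2019b} or by computing that the Gordon-Litherland form of an alternating diagram has nullity zero via \cite{Boden-Chrisman-Karimi-2019}. Once this is established, Theorem \ref{thm:span} gives $\spn(\lb D \rb_\Si) = 4n(D) - 4g(\Si) + 4$.

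Next, given any other diagram $D'$ for $L$ on some surface $\Si'$, I would use the Kamada-Kamada construction to assume $D'$ is cellularly embedded, and then iteratively destabilize. By Theorem \ref{thm:kuperberg}, after $\ell = g(\Si') - g(\Si)$ destabilizations along disjoint curves $\ga_1, \ldots, \ga_\ell$, we reach a minimal genus diagram $D''$ which, by Kuperberg's uniqueness, represents the same link in $\Si \times I$ as $D$ does; in particular $\spn(\lb D''\rb_\Si) = \spn(\lb D \rb_\Si)$. The key bookkeeping is to note that each destabilization can only convert a $z$-factor in a state contribution $\lb D'\,|\,S \rb_{\Si'}$ into the scalar $(-A^2 - A^{-2})$, which increases the $A$-span of that contribution by at most $4$; summing over the $\ell$ destabilizations gives $\spn(\lb D''\rb_\Si) \leq \spn(\lb D' \rb_{\Si'}) + 4\ell$.

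Then I would combine this with the universal span bound coming from Lemma \ref{lemma:dual-state} and Corollary \ref{cor-adequate}, which gives $\spn(\lb D' \rb_{\Si'}) \leq 4(n(D') - g(\Si')) + 4$. Chaining these together with the equality $\spn(\lb D \rb_\Si) = 4(n(D) - g(\Si)) + 4$ and using $g(\Si') = g(\Si) + \ell$ yields
\begin{equation*}
4(n(D) - g(\Si)) + 4 \leq 4(n(D') - g(\Si') + \ell) + 4 = 4(n(D') - g(\Si)) + 4,
\end{equation*}
hence $n(D) \leq n(D')$, which is the first Tait conjecture. The second Tait conjecture then follows because, if $D_1$ and $D_2$ are two reduced alternating diagrams for $L$, the first step shows both are minimal genus, so by Theorem \ref{thm:kuperberg} they represent isotopic links in a common thickened surface $\Si \times I$; part (iv) of Corollary \ref{cor:KMT}, together with Proposition \ref{prop:adequate}, then gives $n(D_1) = n(D_2)$ and $w(D_1) = w(D_2)$.

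The main obstacle is the span bookkeeping in the destabilization step: one has to verify carefully that destabilizing along a curve $\ga$ can alter $\lb D' \,|\, S \rb_{\Si'}$ only by the substitution $z \mapsto -A^2 - A^{-2}$ in those state contributions for which $i_*(H_1(S))$ meets the symplectic subspace generated by $[\ga]$ and its Poincar\'e dual, and that the total increase in span across $\ell$ destabilizations is at most $4\ell$. This is where the homological (as opposed to merely polynomial) information carried by the Jones-Krushkal bracket is essential, and it is what lets us bypass Conjecture \ref{conj:last} entirely.
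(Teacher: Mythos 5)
Your proposal reproduces the paper's own argument in Section~\ref{sec-5}: you establish that every reduced alternating diagram is of minimal genus (via Adams et al.~or the Gordon--Litherland pairing), invoke Theorem~\ref{thm:span} for the exact span $4n-4g+4$, track the span of the bracket across $\ell$ destabilizations via the substitution $z \mapsto -A^2-A^{-2}$ to get $\spn(\lb D'' \rb) \le \spn(\lb D' \rb) + 4\ell$, combine this with the bound from Lemma~\ref{lemma:dual-state} and Corollary~\ref{cor-adequate}, and finish the writhe statement with Corollary~\ref{cor:KMT}(iv) and Proposition~\ref{prop:adequate}. This is exactly the chain of inequalities the paper uses, with only a relabeling of which diagram plays the role of $D$, $D'$, and $D''$, and you have also correctly singled out the span-bookkeeping under destabilization as the delicate step, which the paper treats at the same level of detail.
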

 
\begin{remark} \label{remark-thistlethwaite}
In the classical setting, Thistlethwaite proved the following stronger result, namely that a classical link $L$ is alternating and prime if and only if $\spn(V_K(t)) = c(L),$ the crossing number of $L$. One can see by example that this result is not true for virtual knots. In particular, the virtual knots 4.98 and 4.107 are both checkerboard colorable, prime and have the same crossing number and reduced Jones-Krushkal polynomial (see Table \ref{table1}). However, 4.107 is alternating and 4.98 is not. 
\end{remark}
 
A natural question is whether the Tait flyping conjecture can also be extended to alternating virtual links. The flype move is shown in Figure \ref{fig-flype}. For tangles that contain only classical crossings, it is immediate that the flype move does not alter the virtual link type. When the tangle contains virtual crossings, the flype move can change the virtual link type. (This was already noted by Zinn-Justin and Zuber in \cite{matrix-integrals}.) 

The analogue of the Tait flyping conjecture is therefore the assertion that any two reduced alternating link diagrams of the same link are related by a sequence of flype moves by classical tangles.

\begin{problem}
Is the Tait flyping conjecture true for alternating virtual links?      
\end{problem}  

\begin{figure}[!ht]
\centering\includegraphics[height=15mm]{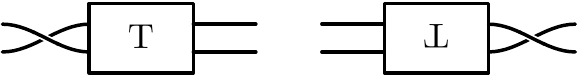}
\caption{The flype move for virtual links, where ``$T$'' is a classical tangle diagram.} \label{fig-flype}
\end{figure}

In a different direction, one can ask whether the Tait conjectures continue to hold in the welded category.

\begin{problem}
Are the Tait conjectures true for alternating welded links?      
\end{problem}  

Another interesting question is whether the Jones-Krushkal polynomial is a virtual unknot detector. Proposition \ref{prop:non-check} shows that a virtual knot that is not checkerboard colorable has nontrivial Jones-Krushkal polynomial.

\begin{problem}
Does there exist a checkerboard colorable virtual knot $K$ which is nontrivial and has $J_K(t,z) =1?$
\end{problem}  

For classical knots, this is equivalent to the open problem which asks whether the Jones polynomial is an unknot detector.


For classical links, Khovanov defined a homology theory that categorifies the Jones polynomial. The result is a  bigraded homology theory of links that is known to detect the classical unknot \cite{KM-2011}. Khovanov homology has been extended to virtual knots and links (see \cites{Manturov-kh, Manturov-2007}), and it categorifies the usual Jones polynomial for virtual links. However, the resulting knot homology does not detect the virtual unknot.
   
An interesting problem would be to construct a triply graded homology theory for links in thickened surfaces that categorifies the Jones-Krushkal polynomial. In particular, is the resulting knot homology theory sufficiently strong to detect the virtual unknot?  

In closing, Table \ref{table1} presents the Jones-Krushkal polynomials for virtual knots with up to three crossings and the reduced Jones-Krushkal polynomial for checkerboard colorable virtual knots with up to four crossings.  

 \subsection*{Acknowledgements}We would like to thank Micah Chrisman, Robin Gaudreau, Andrew Nicas, and Will Rushworth for their valuable feedback.  

\newpage

\renewcommand{\arraystretch}{1.20}
 \begin{table}[H] 
\begin{tabular}{c|c}
Virtual Knot &      
$\wt{J}_K(t,z)$ \\
\hline \hline
2.1 & $(- t^{-5/2}+t^{-3/2}+  t^{-1})z$\\ \hline
3.1 & $-(t^{-3/2}+2t^{-1}+t^{1/2}+1)z -(t^{-1}+2t^{-1/2}) z^2$\\ \hline
3.2 & $ (t^{-2}-t^{-1}+1-t +t^2) z$\\ \hline
3.3 & $-(t^{-3} + 2 t^{-5/2} + 2 t^{-2}) z -(t^{-5/2} +t^{-2} + t^{-3/2}) z^2$\\ \hline
3.4 & $-(3t^{-1} + 2 t^{-1/2}) z -(t^{-3/2} +t^{-1/2} + 1) z^2$\\ \hline
\hline \\
Virtual Knot  &    
$J_K(t,z)$  \\
\hline \hline
3.5 & $(t^{-3} -2t^{-2}) + (t^{-7/2}-t^{-5/2}-t^{-3/2})z$\\ \hline
{\bf 3.6} & $-t^{-4}+t^{-3}+t^{-1}$\\ \hline
3.7 & $(t^{-2}-t^{-1}-1) + (t^{-3/2}-2t^{-1/2})z$\\ \hline
4.85 & $ (3t^{-2}+2t^{-1}) + (t^{-5/2}+6t^{-3/2} +t^{-1/2})z + (t^{-2}+2t^{-1})z^2$  \\ \hline
4.86 & $(-t^{-1}+2-2t) + (-t^{-3/2}+t^{-1/2}-t^{3/2})z$ \\ \hline
4.89 & $ (t^{-4}+4t^{-3}) +(4t^{-7/2}+4 t^{-5/2})z + (2t^{-3}+t^{-2})z^2$ \\ \hline
4.90 & $5 + (4t^{-1/2}+4t^{1/2})z + (t^{-1}+1+t)z^2$ \\ \hline
4.98 & $(t^{-1}+3+t) + (4t^{-1/2}+4 t^{1/2})z + 3z^2$ \\ \hline
4.99  & $(-t^{-1}+3-t) + (-t^{-3/2}+t^{-1/2}+t^{1/2}-t^{3/2})z$ \\ \hline
4.105 & $(t^{-4}+t^{-3}-2t^{-2}+t^{-1}) + (2t^{-7/2}-2t^{-5/2})z$ \\ \hline
4.106 & $(-t^{-3}+t^{-2}-1) +(-t^{-5/2}+2 t^{-3/2}-2t^{-1/2})z$ \\ \hline
4.107 & $(t^{-1}+3+t) +(4t^{-1/2}+4 t^{1/2})z + 3z^2$ \\ \hline
{\bf 4.108}  & $t^{-2}-t^{-1}+1-t+t^2$ \\  \hline
\end{tabular}
\vspace{5mm}
\caption{The Jones-Krushkal polynomials for virtual knots with up to three crossings and for checkerboard colorable virtual knots up to four crossings. Knots listed in boldface font are classical.} \label{table1}
\end{table}


\bibliographystyle{halpha}    
\begin{bibdiv}
\begin{biblist}

\bib{Adams-2019a}{article}{
      author={Adams, Colin},
      author={Albors-Riera, Carlos},
      author={Haddock, Beatrix},
      author={Li, Zhiqi},
      author={Nishida, Daishiro},
      author={Reinoso, Braeden},
      author={Wang, Luya},
       title={Hyperbolicity of links in thickened surfaces},
        date={2019},
        ISSN={0166-8641},
     journal={Topology Appl.},
      volume={256},
       pages={262\ndash 278},
         url={https://doi.org/10.1016/j.topol.2019.01.022},
      review={\MR{3916014}},
}

\bib{Adams-2019b}{misc}{
      author={Adams, Colin},
      author={Eisenberg, Or},
      author={Greenberg, Jonah},
      author={Kapoor, Kabir},
      author={Liang, Zhen},
      author={O'Connor, Kate},
      author={Pacheco-Tallaj, Natalia},
      author={Wang, Yi},
       title={Tg-hyperbolicity of virtual links},
        date={2019},
         url={https://arxiv.org/pdf/1904.06385},
        note={\href{https://arxiv.org/pdf/1904.06385.pdf}{ArXiv/1904.06385}},
}

\bib{Adams}{article}{
      author={Adams, Colin},
      author={Fleming, Thomas},
      author={Levin, Michael},
      author={Turner, Ari~M.},
       title={Crossing number of alternating knots in {$S\times I$}},
        date={2002},
        ISSN={0030-8730},
     journal={Pacific J. Math.},
      volume={203},
      number={1},
       pages={1\ndash 22},
  url={https://doi-org.libaccess.lib.mcmaster.ca/10.2140/pjm.2002.203.1},
      review={\MR{1895923}},
}

\bib{Bankwitz}{article}{
      author={Bankwitz, Carl},
       title={\"uber die {T}orsionszahlen der alternierenden {K}noten},
        date={1930},
        ISSN={0025-5831},
     journal={Math. Ann.},
      volume={103},
      number={1},
       pages={145\ndash 161},
         url={https://doi-org.libaccess.lib.mcmaster.ca/10.1007/BF01455692},
      review={\MR{1512619}},
}

\bib{Boden-Chrisman-Karimi-2019}{misc}{
      author={Boden, Hans~U.},
      author={Chrisman, Micah},
      author={Karimi, Homayun},
       title={The {G}ordon-{L}itherland pairing for links in thickened
  surfaces},
        date={2019},
        note={Preprint},
}

\bib{Boden-Gaudreau-Harper-2016}{article}{
      author={Boden, Hans~U.},
      author={Gaudreau, Robin~I.},
      author={Harper, Eric},
      author={Nicas, Andrew~J.},
      author={White, Lindsay},
       title={Virtual knot groups and almost classical knots},
        date={2017},
     journal={Fundamenta Mathematicae},
      volume={138},
       pages={101\ndash 142},
}

\bib{Carter-Kamada-Saito}{article}{
      author={Carter, J.~Scott},
      author={Kamada, Seiichi},
      author={Saito, Masahico},
       title={Stable equivalence of knots on surfaces and virtual knot
  cobordisms},
        date={2002},
        ISSN={0218-2165},
     journal={J. Knot Theory Ramifications},
      volume={11},
      number={3},
       pages={311\ndash 322},
         url={http://dx.doi.org/10.1142/S0218216502001639},
        note={Knots 2000 Korea, Vol. 1 (Yongpyong)},
      review={\MR{1905687 (2003f:57011)}},
}

\bib{Chmutov-Pak}{article}{
      author={Chmutov, Sergei},
      author={Pak, Igor},
       title={The {K}auffman bracket of virtual links and the
  {B}ollob\'{a}s-{R}iordan polynomial},
        date={2007},
        ISSN={1609-3321},
     journal={Mosc. Math. J.},
      volume={7},
      number={3},
       pages={409\ndash 418, 573},
         url={https://doi.org/10.17323/1609-4514-2007-7-3-409-418},
      review={\MR{2343139}},
}

\bib{Chmutov-Voltz}{article}{
      author={Chmutov, Sergei},
      author={Voltz, Jeremy},
       title={Thistlethwaite's theorem for virtual links},
        date={2008},
        ISSN={0218-2165},
     journal={J. Knot Theory Ramifications},
      volume={17},
      number={10},
       pages={1189\ndash 1198},
         url={https://doi.org/10.1142/S0218216508006609},
      review={\MR{2460170}},
}

\bib{Dye-Kauffman-2005}{article}{
      author={Dye, Heather~A.},
      author={Kauffman, Louis~H.},
       title={Minimal surface representations of virtual knots and links},
        date={2005},
        ISSN={1472-2747},
     journal={Algebr. Geom. Topol.},
      volume={5},
       pages={509\ndash 535},
         url={https://doi.org/10.2140/agt.2005.5.509},
      review={\MR{2153118}},
}

\bib{Dye-2017}{incollection}{
      author={Dye, Heather~A.},
       title={Checkerboard framings and states of virtual link diagrams},
        date={2017},
   booktitle={Knots, links, spatial graphs, and algebraic invariants},
      series={Contemp. Math.},
      volume={689},
   publisher={Amer. Math. Soc., Providence, RI},
       pages={53\ndash 64},
      review={\MR{3656322}},
}

\bib{GPV}{article}{
      author={Goussarov, Mikhael},
      author={Polyak, Michael},
      author={Viro, Oleg},
       title={Finite-type invariants of classical and virtual knots},
        date={2000},
        ISSN={0040-9383},
     journal={Topology},
      volume={39},
      number={5},
       pages={1045\ndash 1068},
         url={http://dx.doi.org/10.1016/S0040-9383(99)00054-3},
      review={\MR{1763963 (2001i:57017)}},
}

\bib{Green}{misc}{
      author={Green, Jeremy},
       title={A table of virtual knots},
        date={2004},
         url={http://www.math.toronto.edu/drorbn/Students/GreenJ},
  note={\href{http://www.math.toronto.edu/drorbn/Students/GreenJ}{www.math.toronto.edu/drorbn/Students/GreenJ}},
}

\bib{Jones-85}{article}{
      author={Jones, Vaughan F.~R.},
       title={A polynomial invariant for knots via von {N}eumann algebras},
        date={1985},
        ISSN={0273-0979},
     journal={Bull. Amer. Math. Soc. (N.S.)},
      volume={12},
      number={1},
       pages={103\ndash 111},
  url={https://doi-org.libaccess.lib.mcmaster.ca/10.1090/S0273-0979-1985-15304-2},
      review={\MR{766964}},
}

\bib{Kamada-2002}{article}{
      author={Kamada, Naoko},
       title={On the {J}ones polynomials of checkerboard colorable virtual
  links},
        date={2002},
        ISSN={0030-6126},
     journal={Osaka J. Math.},
      volume={39},
      number={2},
       pages={325\ndash 333},
         url={http://projecteuclid.org/euclid.ojm/1153492771},
      review={\MR{1914297}},
}

\bib{Kamada-2004}{article}{
      author={Kamada, Naoko},
       title={Span of the {J}ones polynomial of an alternating virtual link},
        date={2004},
        ISSN={1472-2747},
     journal={Algebr. Geom. Topol.},
      volume={4},
       pages={1083\ndash 1101},
  url={https://doi-org.libaccess.lib.mcmaster.ca/10.2140/agt.2004.4.1083},
      review={\MR{2100692}},
}

\bib{Karimi}{thesis}{
      author={Karimi, Homayun},
       title={Alternating {V}irtual {K}nots},
        type={Ph.D. Thesis},
        date={2018},
}

\bib{Kauffman-87}{article}{
      author={Kauffman, Louis~H.},
       title={State models and the {J}ones polynomial},
        date={1987},
        ISSN={0040-9383},
     journal={Topology},
      volume={26},
      number={3},
       pages={395\ndash 407},
  url={https://doi-org.libaccess.lib.mcmaster.ca/10.1016/0040-9383(87)90009-7},
      review={\MR{899057}},
}

\bib{KVKT}{article}{
      author={Kauffman, Louis~H.},
       title={Virtual knot theory},
        date={1999},
        ISSN={0195-6698},
     journal={European J. Combin.},
      volume={20},
      number={7},
       pages={663\ndash 690},
         url={http://dx.doi.org/10.1006/eujc.1999.0314},
      review={\MR{1721925 (2000i:57011)}},
}

\bib{KK00}{article}{
      author={Kamada, Naoko},
      author={Kamada, Seiichi},
       title={Abstract link diagrams and virtual knots},
        date={2000},
        ISSN={0218-2165},
     journal={J. Knot Theory Ramifications},
      volume={9},
      number={1},
       pages={93\ndash 106},
         url={http://dx.doi.org/10.1142/S0218216500000049},
      review={\MR{1749502 (2001h:57007)}},
}

\bib{KM-2011}{article}{
      author={Kronheimer, Peter~B.},
      author={Mrowka, Tomasz~S.},
       title={Khovanov homology is an unknot-detector},
        date={2011},
        ISSN={0073-8301},
     journal={Publ. Math. Inst. Hautes \'Etudes Sci.},
      number={113},
       pages={97\ndash 208},
  url={https://doi-org.libaccess.lib.mcmaster.ca/10.1007/s10240-010-0030-y},
      review={\MR{2805599}},
}

\bib{KNS-2002}{article}{
      author={Kamada, Naoko},
      author={Nakabo, Shigekazu},
      author={Satoh, Shin},
       title={A virtualized skein relation for {J}ones polynomials},
        date={2002},
        ISSN={0019-2082},
     journal={Illinois J. Math.},
      volume={46},
      number={2},
       pages={467\ndash 475},
         url={http://projecteuclid.org/euclid.ijm/1258136203},
      review={\MR{1936929}},
}

\bib{krushkal-2011}{article}{
      author={Krushkal, Vyacheslav},
       title={Graphs, links, and duality on surfaces},
        date={2011},
        ISSN={0963-5483},
     journal={Combin. Probab. Comput.},
      volume={20},
      number={2},
       pages={267\ndash 287},
         url={https://doi.org/10.1017/S0963548310000295},
      review={\MR{2769192}},
}

\bib{Kuperberg}{article}{
      author={Kuperberg, Greg},
       title={What is a virtual link?},
        date={2003},
        ISSN={1472-2747},
     journal={Algebr. Geom. Topol.},
      volume={3},
       pages={587\ndash 591 (electronic)},
         url={http://dx.doi.org/10.2140/agt.2003.3.587},
      review={\MR{1997331 (2004f:57012)}},
}

\bib{Lickorish}{book}{
      author={Lickorish, W. B.~Raymond},
       title={An introduction to knot theory},
      series={Graduate Texts in Mathematics},
   publisher={Springer-Verlag, New York},
        date={1997},
      volume={175},
        ISBN={0-387-98254-X},
         url={http://dx.doi.org/10.1007/978-1-4612-0691-0},
      review={\MR{1472978}},
}

\bib{Manturov-2003a}{article}{
      author={Manturov, Vassily~O.},
       title={Kauffman-like polynomial and curves in 2-surfaces},
        date={2003},
        ISSN={0218-2165},
     journal={J. Knot Theory Ramifications},
      volume={12},
      number={8},
       pages={1145\ndash 1153},
         url={https://doi.org/10.1142/S0218216503002974},
      review={\MR{2017986}},
}

\bib{Manturov-kh}{article}{
      author={Manturov, Vassily~O.},
       title={The {K}hovanov polynomial for virtual knots},
        date={2004},
        ISSN={0869-5652},
     journal={Dokl. Akad. Nauk},
      volume={398},
      number={1},
       pages={15\ndash 18},
      review={\MR{2128214}},
}

\bib{Manturov-2007}{article}{
      author={Manturov, Vassily~O.},
       title={Khovanov's homology for virtual knots with arbitrary
  coefficients},
        date={2007},
        ISSN={1607-0046},
     journal={Izv. Ross. Akad. Nauk Ser. Mat.},
      volume={71},
      number={5},
       pages={111\ndash 148},
         url={http://dx.doi.org/10.1070/IM2007v071n05ABEH002381},
      review={\MR{2362875}},
}

\bib{Manturov-2013}{article}{
      author={Manturov, Vassily~O.},
       title={Parity and projection from virtual knots to classical knots},
        date={2013},
        ISSN={0218-2165},
     journal={J. Knot Theory Ramifications},
      volume={22},
      number={9},
       pages={1350044, 20},
         url={http://dx.doi.org/10.1142/S0218216513500442},
      review={\MR{3105303}},
}

\bib{Tait3}{article}{
      author={Menasco, William},
      author={Thistlethwaite, Morwen},
       title={The classification of alternating links},
        date={1993},
        ISSN={0003-486X},
     journal={Ann. of Math. (2)},
      volume={138},
      number={1},
       pages={113\ndash 171},
         url={https://doi-org.libaccess.lib.mcmaster.ca/10.2307/2946636},
      review={\MR{1230928}},
}

\bib{Murasugi-871}{article}{
      author={Murasugi, Kunio},
       title={Jones polynomials and classical conjectures in knot theory},
        date={1987},
        ISSN={0040-9383},
     journal={Topology},
      volume={26},
      number={2},
       pages={187\ndash 194},
  url={https://doi-org.libaccess.lib.mcmaster.ca/10.1016/0040-9383(87)90058-9},
      review={\MR{895570}},
}

\bib{Rushworth-2018}{article}{
      author={Rushworth, William},
       title={Doubled {K}hovanov homology},
        date={2018},
        ISSN={0008-414X},
     journal={Canad. J. Math.},
      volume={70},
      number={5},
       pages={1130\ndash 1172},
  url={https://doi-org.libaccess.lib.mcmaster.ca/10.4153/CJM-2017-056-6},
      review={\MR{3831917}},
}

\bib{Stong-1994}{article}{
      author={Stong, Richard},
       title={The {J}ones polynomial of parallels and applications to crossing
  number},
        date={1994},
        ISSN={0030-8730},
     journal={Pacific J. Math.},
      volume={164},
      number={2},
       pages={383\ndash 395},
         url={http://projecteuclid.org/euclid.pjm/1102622101},
      review={\MR{1272657}},
}

\bib{Tait}{article}{
      author={Tait, Peter~G.},
       title={On {K}nots {I}, {II}, {III}, {S}cientific papers, {V}ol {I}},
        date={1898},
     journal={Cambridge University Press London},
       pages={273\ndash 347},
}

\bib{Thistlethwaite-87}{article}{
      author={Thistlethwaite, Morwen~B.},
       title={A spanning tree expansion of the {J}ones polynomial},
        date={1987},
        ISSN={0040-9383},
     journal={Topology},
      volume={26},
      number={3},
       pages={297\ndash 309},
  url={https://doi-org.libaccess.lib.mcmaster.ca/10.1016/0040-9383(87)90003-6},
      review={\MR{899051}},
}

\bib{Turaev-1987}{article}{
      author={Turaev, Vladimir~G.},
       title={A simple proof of the {M}urasugi and {K}auffman theorems on
  alternating links},
        date={1987},
        ISSN={0013-8584},
     journal={Enseign. Math. (2)},
      volume={33},
      number={3-4},
       pages={203\ndash 225},
      review={\MR{925987}},
}

\bib{matrix-integrals}{article}{
      author={Zinn-Justin, Paul},
      author={Zuber, Jean-Bernard},
       title={Matrix integrals and the generation and counting of virtual
  tangles and links},
        date={2004},
        ISSN={0218-2165},
     journal={J. Knot Theory Ramifications},
      volume={13},
      number={3},
       pages={325\ndash 355},
  url={https://doi-org.libaccess.lib.mcmaster.ca/10.1142/S0218216504003172},
      review={\MR{2061172}},
}

\end{biblist}
\end{bibdiv}

\end{document}